\documentclass[11pt]{amsart}

\usepackage[table]{xcolor}

\usepackage{ytableau,tikz,varwidth}
\usetikzlibrary{calc, snakes}
\usepackage[enableskew]{youngtab}
\usepackage{geometry}
\usepackage{enumerate,}

\newtheorem{thm}{Theorem}[section]
\newtheorem{lemma}[thm]{Lemma}
\newtheorem{prop}[thm]{Proposition}
\newtheorem{claim}[thm]{Claim}
\newtheorem{cor}[thm]{Corollary}

\theoremstyle{definition} % text is not in italics
\newtheorem{ex}[thm]{Example}
\newtheorem{defn}[thm]{Definition}

\theoremstyle{remark}
\newtheorem{remark}[thm]{Remark}

\newcommand{\lam}{\lambda}
\newcommand{\lamu}{\sigma}

\newcommand{\al}{\alpha}
\newcommand{\be}{\beta}
\newcommand{\sig}{\sigma}
\newcommand{\E}{\mathbb{E}}

\newcommand{\PP}{\mathbb{P}}
\newcommand{\cH}{\mathcal{H}}
\newcommand{\cL}{\mathcal{L}}
\newcommand{\cO}{\mathcal{O}}
\newcommand{\cP}{\mathcal{P}}
\newcommand{\cQ}{\mathcal{Q}}
\newcommand{\cU}{\mathcal{U}}
\newcommand{\cV}{\mathcal{V}}
\newcommand{\BN}{\textrm{BN}}
\newcommand{\comment}[1]{}

\newcommand{\Grd}{G^r_d}
\newcommand{\grd}{g^r_d}
\newcommand{\Grdab}{G^{r,\alpha,\beta}_d}

\newcommand{\bs}{\boldsymbol}
\newcommand{\ul}{\underline}
\newcommand{\st}{\scriptstyle}

\DeclareMathOperator{\Pic}{Pic}
\DeclareMathOperator{\VS}{VS}

\title[Genera of Brill-Noether curves and Young tableaux]{Genera of Brill-Noether curves and staircase paths in Young tableaux}
\author[M. Chan]{Melody Chan}\address{Department of Mathematics, Brown University, Box
1917, Providence, RI 02912}\email{mtchan@math.brown.edu}
\author[A. L\'opez]{Alberto L\'opez Mart\'in}\address{IMPA, Estrada Dona Castorina, 110, Rio de Janeiro, RJ 22460-902}
\email{alopez@impa.br}
\author[N. Pflueger]{Nathan Pflueger}\address{Department of Mathematics, Brown University, Box
1917, Providence, RI 02912}\email{pflueger@math.brown.edu}
\author[M. Teixidor]{Montserrat Teixidor i Bigas}\address{Department of Mathematics, Tufts University, Medford, MA 02155}
\email{montserrat.teixidoribigas@tufts.edu}
\date{\today}

\begin{document}
\maketitle

\begin{abstract}
In this paper,  we compute the genus of the variety of linear series of rank $r$ and degree $d$ on a general curve of genus $g$, with ramification at least $\alpha$ and $\beta$ at two given points, when that variety is 1-dimensional.  Our proof uses degenerations and limit linear series along with an analysis of random staircase paths in Young tableaux, and produces an explicit scheme-theoretic description of the limit linear series of fixed rank and degree on a generic chain of elliptic curves when that scheme is itself a curve.  
\end{abstract}
\section{Introduction}

Fix numbers $g,r,d$, and let $X$ be a smooth, proper curve of genus $g$ over an algebraically closed field.
A {\em linear series} of rank $r$ and degree $d$ on $X$, or a {\em $g^r_d$}  for short, is the pair of a line bundle $L\in \Pic^d(X)$
 together with an $(r\!+\!1)$-dimensional space $V\subseteq H^0(X,L)$. 
  Linear series are the central object of study in the classical Brill-Noether theory of algebraic curves. 
   For example, the main results of Brill-Noether theory imply that when 
$$\rho(g,r,d) := g-(r+1)(g-d+r)$$
is nonnegative, the $g^r_d$s on a general curve $X$ of genus $g$ form a proper scheme $G^r_d(X)$ that is smooth of expected dimension $\rho$, 
and connected if $\rho>0$ \cite{fl-connected,gieseker,gh-brillnoether}. 
Thus, if $\rho=1$, then $\Grd(X)$ is a smooth, proper curve, whose genus $g'$ is known:

\begin{thm}\label{t:intro}
Suppose $\rho(g,r,d) = 1$.  For a general smooth curve $X$ of genus $g$, the genus of the curve $\Grd(X)$ is 
\begin{equation}
\label{eq:formula}
g'=1+\frac{(r+1)(g-d+r)}{g-d+2r+1}\cdot g!\cdot \prod_{i=0}^r \frac{i!}{(g-d+r+i)!}.
\end{equation}
\end{thm}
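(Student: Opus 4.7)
The plan is to use Eisenbud–Harris limit linear series: degenerate $X$ to a chain $X_0 = E_1 \cup \cdots \cup E_g$ of $g$ elliptic components joined at generic pairs of points. The relative $G^r_d$ extends to a proper flat family over the versal deformation, so $G^r_d(X)$ on the smooth generic fiber has the same arithmetic genus as the limit scheme $\overline{G^r_d}(X_0)$. Since I expect $\overline{G^r_d}(X_0)$ to be a reduced nodal curve whose components are each isomorphic to one of the elliptic $E_i$, the formula $p_a = n - c + \sum g_i + 1 = n + 1$ (nodes $n$, components $c$, each $g_i = 1$) reduces the problem to counting nodes.

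The first step is geometric: give an explicit scheme-theoretic description of $\overline{G^r_d}(X_0)$. For $\rho = 0$, it is a classical fact that limit $\grd$s on such a chain are in bijection with standard Young tableaux of rectangular shape $\lambda = (g-d+r)^{r+1}$, encoded by the vanishing sequences of the $g$ aspects at the $2(g-1)$ nodes. For $\rho = 1$, I would prove that each irreducible component of $\overline{G^r_d}(X_0)$ is obtained by letting the line bundle vary on a single elliptic component $E_i$ while freezing discrete data on the other $E_j$; so components are parameterized by a combinatorial object consisting of a rectangular filling together with a marked ``free'' entry that records which $E_i$ carries the modulus. Two components meet at a node precisely when the free entry reaches a position that forces the whole limit $\grd$ to collapse to a genuine $\rho = 0$ limit, i.e., a classical SYT.

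The second step is combinatorial: translate the node count into a count of staircase paths in SYTs of shape $\lambda$. Each ordinary SYT $T$ of shape $\lambda$ has $(g-1)$ boxes labeled $1, \ldots, g-1$, and the free entry sweeping across $E_i$ hits a node whenever it lands adjacent to a ``corner'' of the tableau filtration, so the total number of nodes can be written as a sum over pairs $(T, \text{staircase path through }T)$. The prefactor $\frac{(r+1)(g-d+r)}{g-d+2r+1}$ should arise as the expected number of such paths starting at a uniformly chosen corner of the rectangle, and the factor $g! \prod_{i=0}^r i!/(g-d+r+i)!$ is $g$ times the hook length formula count $f^\lambda$ of SYTs of shape $\lambda = (g-d+r)^{r+1}$, with the factor of $g$ accounting for the choice of host component $E_i$.

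The main obstacle, I expect, is not the final hook-length manipulation but Step 1: proving reducedness of $\overline{G^r_d}(X_0)$, matching its component set to the combinatorial index, and showing that components meet transversely exactly at the predicted nodes (with no hidden embedded or nonreduced structure). Once the scheme structure is nailed down, Step 2 becomes a pleasant exercise in random SYT combinatorics using the hook length formula and a conditional-probability computation for the staircase path count.
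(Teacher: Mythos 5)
Your overall strategy is the one the paper follows: degenerate to a generic chain of elliptic curves, use flatness and properness of the limit-linear-series family (Murray--Osserman) to transfer the arithmetic genus, describe the limit scheme as a reduced nodal curve indexed by tableau-like data on the $(r+1)\times(g-d+r)$ rectangle, and count nodes by a staircase-path statistic, finishing with the hook length formula; you also correctly identify reducedness as the crux (the paper gets it from a splitting lemma for twice-pointed series on a single elliptic curve, not from Gieseker--Petri). However, your Step 1 structure claim is false as stated: the components of $G^r_d(X_0)$ are \emph{not} all elliptic. Besides the components on which the line bundle moves on a single $E_i$ (your ``host'' components, indexed by almost-standard fillings with one missing label), there are rational components on which every line bundle is rigid but the aspect on one $E_i$ moves in a pencil of sections (the ``swap'' components, isomorphic to $\mathbb{P}^1$). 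Two elliptic components in general do not meet each other directly; they are joined by chains of these $\mathbb{P}^1$'s, so the dual graph is a subdivision (with extra tails) of the graph on almost-standard tableaux. Your formula $p_a=n+1$ with all $g_i=1$ is only legitimate after contracting the rational chains, equivalently after projecting to $W^r_d(X_0)$ in the Jacobian, and the node count you then need is the number of adjacencies of almost-standard tableaux: the number of nodes on the component attached to a pair $(T,m)$ equals the number of turns of the staircase path separating the entries $<m$ from the entries $\ge m$ in $T$.

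The second gap is that the combinatorial step is not the routine exercise you describe, and your heuristic for the prefactor is not right: $(r+1)(g-d+r)/(g-d+2r+1)$ is not an expected number of paths from a random corner, but one half of the expected number of turns of a staircase path chosen with probability proportional to the number of standard fillings compatible with it; for an $a\times b$ rectangle this expectation turns out to be the harmonic mean $2ab/(a+b)$, whence edge count $\tfrac{ab}{a+b}\,g\,f^{\lambda}$ and the stated genus. Establishing this requires two nontrivial inputs that your sketch does not anticipate: an involution on half-edges of the tableau graph showing that, at any fixed box, left and right turns are equally likely under this (non-uniform) distribution, and a formula for the expected value of the last entry of a given row of a random standard filling, namely $n+1-f^{\sigma^i}/f^{\sigma}$, proved by an add-a-box double count. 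Without these, the translation from ``nodes'' to the closed-form prefactor does not go through, even granting the corrected geometric description above.
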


\noindent This result is due to Eisenbud-Harris \cite{eh-generaltype} and Pirola \cite{pirola}; the case $r=1$ had been proven previously by Kempf \cite{kempf}. 
 Theorem~\ref{t:intro} is not a mere curiosity.  It features as an ingredient in the proof of the main theorem of \cite{eh-generaltype}.  
 Moreover, the rational  map $\overline{\mathcal{M}_g} \dashrightarrow \overline{\mathcal{M}_{g'}}$ of moduli spaces obtained by assigning to a curve 
 its Brill Noether curve for suitable values of $r, d$ is  exploited in recent work of Farkas \cite{farkas} and Ortega \cite{ortega}.

In this paper, we give a new proof of Theorem~\ref{t:intro}, and we generalize it to the case of curves parametrizing $g^r_d$s on $X$ 
with prescribed ramification profiles at two generic fixed points of $X$.

\begin{thm}\label{t:intro2}
Fix $g,r,$ and $d$, and let $\alpha = (\alpha_0,\ldots, \alpha_r)$ be a nondecreasing and  $\beta = (\beta_0,\ldots,\beta_r)$ a nonincreasing sequence of integers.
 Let $\sig = \sig(g,r,d,\al,\be)$ be the skew shape defined by $(g,r,d,\al,\be)$ as in Definition \ref{def:skewtab}.
Suppose that the adjusted Brill Noether number is
\begin{equation}\label{eq:adjustedrho}
\rho(g,r,d,\al,\be) = g-(r+1)(g-d+r)-|\al|-|\be| = 1.
\end{equation}
Then for a general twice-pointed smooth curve $(X,p,q)$ of genus $g$, the scheme $\Grdab(X,p,q)$ is a curve, with at most nodes as singularities, of arithmetic genus 
\begin{equation}\label{eq:main}
1+(r\!+\!1)(n\!+\!1)f^\sigma + \sum_{i=1}^{r+1} (r\!+\!1\!-\!i)\!\cdot\! f^{{}^i\sigma} - \sum_{i=1}^{r+1} (r\!+2\!-\!i)\!\cdot\! f^{\sigma^i},
\end{equation}
where $f^\sigma$ denotes the number of standard fillings of the skew shape $\sigma$ in Definition \ref{def:skewtab}, 
and where ${\sigma^i}$ and ${{}^i\sigma}$ refer to shapes closely related to $\sigma$ (see Definition~\ref{d:sigmai}).
\end{thm}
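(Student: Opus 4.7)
The plan is to prove Theorem~\ref{t:intro2} by degeneration to a chain of elliptic curves, where limit linear series admit an explicit combinatorial description via the skew shape $\sigma$. First I would degenerate the general twice-pointed smooth curve $(X,p,q)$ of genus $g$ to a nodal chain $X_0 = E_1 \cup \cdots \cup E_g$ of elliptic curves glued at generic points, with $p\in E_1$ and $q\in E_g$. Using the scheme-theoretic theory of limit linear series with prescribed ramification (following Eisenbud--Harris, Osserman, and their extensions), one obtains a proper flat family over a one-parameter base whose general fiber is $\Grdab(X,p,q)$ and whose special fiber is a limit scheme $\Grdab(X_0,p,q)$. Since the arithmetic genus is constant in flat families, it suffices to prove that this special fiber is a curve with at worst nodes as singularities and to compute its arithmetic genus; transporting the ``at worst nodal'' conclusion back to the smooth fiber would then give the structural part of the theorem via semicontinuity of the singularity type.

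Next, I would describe $\Grdab(X_0,p,q)$ combinatorially. Limit linear series on $X_0$ with ramification $(\alpha,\beta)$ at $(p,q)$ are determined by chains of vanishing sequences at the nodes of $X_0$; via the Eisenbud--Harris dictionary these correspond to standard fillings of $\sigma$ when $\rho=0$. When $\rho=1$ there is one additional degree of freedom on each elliptic component, giving each irreducible component the structure of a rational curve. I expect to identify the components of $\Grdab(X_0,p,q)$ with ``staircase paths'' through $\sigma$---roughly, monotone sequences of allowable box-swaps---and to show that two components meet transversely at isolated points indexed by intermediate standard fillings.

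With this description in hand, the arithmetic genus is computed by $p_a = \#\mathrm{nodes} - \#\mathrm{components} + 1$, all components being rational. The node count should split into a bulk contribution of $(r+1)(n+1)$ potential box-moves per standard filling of $\sigma$, producing the leading term $(r+1)(n+1)f^\sigma$, together with corrections from tableaux lying near the boundary of $\sigma$ that give the sum $\sum_{i=1}^{r+1}(r+1-i)f^{{}^i\sigma}$; an analogous accounting for components yields the subtracted term $\sum_{i=1}^{r+1}(r+2-i)f^{\sigma^i}$. Combining these with the $+1$ recovers \eqref{eq:main}.

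The main obstacle will be the scheme-theoretic verification in the second step: showing that $\Grdab(X_0,p,q)$ is reduced with only nodal singularities and matching its components and nodes to the claimed combinatorial data. This requires a careful local analysis at each node of $X_0$ using the infinitesimal structure of the moduli of limit linear series, together with a bijective identification of box-moves on $\sigma$ with tangent directions at each node of $\Grdab(X_0,p,q)$. Specializing $\alpha=\beta=0$ should then recover Theorem~\ref{t:intro} via a hook-length-type simplification of the three-term expression to the product formula \eqref{eq:formula}.
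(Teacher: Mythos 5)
Your overall strategy---degenerate to a generic chain of elliptic curves, invoke the Eisenbud--Harris/Osserman (Murray--Osserman) flatness and properness of the limit linear series family to transfer the arithmetic genus, and compute the genus of the special fiber combinatorially---is exactly the paper's route. But there is a genuine error in your description of the special fiber, and it breaks your genus computation. The components of $\Grdab(X_0,p,q)$ are \emph{not} all rational. A component is indexed by a choice of ramification data at the nodes (a ``valid sequence''), and when $\rho=1$ the one component $E_i$ of the chain carrying the extra degree of freedom contributes either a full elliptic curve's worth of moduli (the aspect's line bundle moves in $\Pic^d(E_i)\cong E_i$, the ``stall'' case) or a $\mathbb{P}^1$ of choices of section (the ``swap'' case). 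So the special fiber is a nodal curve with both elliptic and rational components; after contracting the rational chains one gets a curve with \emph{all elliptic} components whose dual graph is $BN(\sigma)$, and its arithmetic genus is $\sum(\text{component genera}) + b_1(\text{dual graph}) = |V(BN(\sigma))| + |E(BN(\sigma))| - |V(BN(\sigma))| + 1 = 1 + |E(BN(\sigma))|$. Your formula $p_a = \#\mathrm{nodes} - \#\mathrm{components} + 1$ computes only the first Betti number of the dual graph and omits the genus contribution $|V(BN(\sigma))| = (n+1)f^\sigma$ of the elliptic components; it would give a leading coefficient $r$ rather than $r+1$ in \eqref{eq:main}, i.e.\ the wrong answer. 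Relatedly, the components are indexed by almost-standard tableaux (plus the ``bridge'' objects the paper calls pontableaux), not by staircase paths; staircase paths enter only as a tool for counting edges.

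The second gap is that the edge/node count is not routine bookkeeping, and your sketch of ``bulk term plus boundary corrections'' does not by itself produce the coefficients $(r+1-i)$ and $(r+2-i)$. The paper's count of $|E(BN(\sigma))|$ is the technical heart of its combinatorial sections: one introduces the distribution $\mu_{BN}$ on staircase paths (weighting a path by the number of compatible standard fillings), proves via an involution on half-edges of $BN(\sigma)$ that left and right turns in a given box are equiprobable, and computes the probability of a turn in each row using the expected value of the last entry of that row, expressed as $n+1-f^{\sigma^i}/f^\sigma$; summing the resulting recursion yields Theorem~\ref{t:masterformula} and hence the three-term expression. You would need to supply an argument of this kind (or an equivalent one) to justify the precise coefficients; the remaining scheme-theoretic points you flag (reducedness, nodality, and the $\rho=0,1$ analysis on a single twice-pointed elliptic curve) are handled in the paper by a direct splitting argument for $\Grdab(E,p,q)$ as a fiber product over $\Pic^d(E)$, and your plan for those is compatible with it.
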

\begin{figure}
\begin{tikzpicture}[inner sep=0in,outer sep=0in]
\node (s) {\begin{varwidth}{5cm}{
\tiny
\begin{ytableau}
\none & \none & \none & *(white)& &&&&*(black!20!white)&*(black!20!white) \\
\none & \none & \none & *(white)& &&&&*(black!20!white) \\
\none &*(blue!30!white) &*(blue!30!white) & *(white)& &&& \\
 *(blue!30!white)& *(blue!30!white) &*(blue!30!white)  & & &&&
\end{ytableau}
}\end{varwidth}};
\draw[snake=brace,thick] 	(-0.7,1.2) -- (1.55,1.2);
\draw[snake=brace,thick] (-2.5,-1.1) -- (-2.5,1);
\draw (-.3,1.37) node(alpha1)[right] {\Tiny$g-d+r$};
\draw (-3.3,-0.05) node(alpha1)[right] {\Tiny$r+1$};
\end{tikzpicture}
\caption{Skew shape $\sigma(g,r,d,\alpha,\beta)$ associated to the data $(g,r,d)=(6,3,4)$, $\alpha$=(0,0,2,3), $\beta$=(2,1,0,0). }
\label{fig:grdab}
\end{figure}
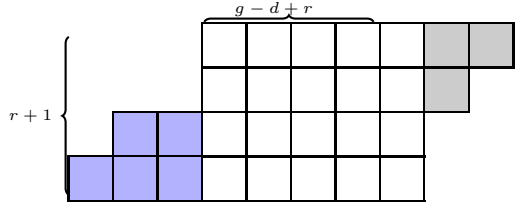
The construction  of the shape $\sigma$ is illustrated by an example in Figure~\ref{fig:grdab}.  The numbers $f^{\sigma^i}$ and $f^{{}^i\sigma}$  defined in Definition~\ref{d:sigmai}  are explicitly calculable using the determinantal formula~\eqref{eq:aitken}.  When $\alpha=\beta=0$, our formula reduces directly to~\eqref{eq:formula} (see Corollary~\ref{cor:final}).

The original proofs of Theorem~\ref{t:intro} were based on computations of cohomology classes in the Jacobian. 
Our proofs use degeneration  and limit linear series  building on the techniques introduced by
Castorena-L\'opez-Teixidor in \cite{castorena-lopez-teixidor}, who considered the case $r=1$ and no ramification points. 
We enumerate components of the space of limit linear series on a chain of elliptic curves  according to ramification data at the nodes.
This type of curve was introduced by Welters  in \cite{welters}  and used successfully thereafter in a number of applications 
(see \cite{castorena-teixidor}, \cite{Nathan}, \cite{Tei},  \cite{Tei2}, \cite{Exvb}, \cite{Tei3}, \cite{Tei4}).
In this paper, we  prove Theorem~\ref{t:intro} for all $r$ and prescribed ramification at two points. 
We also give an original proof of the reducedness of the special fiber of the relevant degeneration to limit linear series when $\rho =1$.  
The proof is not based on the standard study of the Gieseker-Petri map,  but relies instead on the representability of the Brill-Noether functor.
Results of Osserman and Murray-Osserman on the comparison of Osserman and Eisenbud-Harris linear series developed in \cite{murrayosserman, osserman-functor},
 allow us to deduce the genus of the Brill-Noether locus of the general curve from the genus of the Brill-Noether locus of the degeneration.

Brill-Noether loci with fixed ramification have not been studied much so far. We expect to come back to this topic in the future. 
A direct generalization of our results to more than two points of ramification will require considerable more work and might fail in some cases in positive characteristic, 
 as  the Brill-Noether dimension estimate may no longer apply in this case \cite[Remark~2.7.8]{osserman}. 

Our methods also apply in the case $\rho(g,r,d,\alpha,\beta) = 0$,
where they give the following enumerative geometry result. 
This result was already deduced by Tarasca \cite[Section 3.1]{t} using the formula  \cite[14.7.11(v)]{fulton-int-theory} for intersecting Schubert classes.
In recent work, Farkas and Tarasca \cite{farkas-tarasca} also consider a similar problem where a single ramification point is allowed to move.
Our proof generalizes the proof given in \cite{castorena-lopez-teixidor} in the case of trivial ramification and makes the role of skew tableaux explicit.

\begin{thm}\label{t:intro3}
Fix $g,r,d,\alpha,\beta$, and $\rho(g,r,d,\alpha,\beta)$ be defined as in Theorem \ref{t:intro2}, and assume that
$$\rho(g,r,d,\alpha,\beta) = 0.$$
Then for a general twice-pointed smooth curve $(X,p,q)$ of genus $g$, the scheme $G^{r,\alpha,\beta}_d(X,p,q)$ consists of $f^\sigma$ reduced points.  
\end{thm}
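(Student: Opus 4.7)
The strategy is to degenerate $(X,p,q)$ to a chain of elliptic curves and enumerate the limit linear series by standard fillings of $\sigma$. Let $X_0 = E_1 \cup_{q_1} E_2 \cup \cdots \cup_{q_{g-1}} E_g$ be the Welters chain of $g$ elliptic curves, with $p \in E_1$ and $q \in E_g$ chosen so that on each component the two distinguished points differ by a non-torsion class in $\Pic^0$. This is the degeneration used in \cite{castorena-lopez-teixidor} for the $r=1$ unramified case, and in the other references cited in the introduction; the present approach generalizes it to arbitrary $r$ and to ramified linear series.

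The enumerative heart of the argument is a bijection between limit $\grd$'s on $X_0$ with ramification $\geq \alpha$ at $p$ and $\geq \beta$ at $q$ and standard fillings of $\sigma(g,r,d,\alpha,\beta)$. To a limit linear series I would associate, at each node $q_i$ (and at the endpoints $p$ and $q$), the vanishing sequence of the limit series encoded as the complementary partition $\lambda^{(i)}$ inside the $(r+1) \times (g-d+r)$ rectangle. The ramification data at $p$ and $q$ force $\lambda^{(0)}$ and $\lambda^{(g)}$ to be precisely the inner and outer shapes bounding $\sigma$ in Definition~\ref{def:skewtab}. The analysis of $\grd$'s on a single elliptic component with two generic marked points then shows that each consecutive pair $\lambda^{(i-1)} \subseteq \lambda^{(i)}$ can differ by at most one box, and the dimension count coming from $\rho = 0$ rules out the no-change case, so exactly one box is added at each step. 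Conversely, every such chain of partitions extends to a unique limit linear series on $X_0$. Labeling the box added on $E_i$ by the integer $i$ yields the bijection with standard fillings of $\sigma$, and hence the count $f^\sigma$.

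To transfer this count to the general smooth fiber, I would smooth $X_0$ in a one-parameter family and apply the representability of the Brill-Noether functor with fixed ramification due to Osserman \cite{osserman-functor}, together with the comparison of Osserman and Eisenbud-Harris limit linear series due to Murray-Osserman \cite{murrayosserman}. These results make the relative moduli scheme proper and of the expected relative dimension $\rho = 0$, so upper semicontinuity of fiber length bounds $|\Grdab(X,p,q)|$ above by $f^\sigma$; smoothness of the general fiber at expected dimension zero forces the scheme to be reduced, and a deformation argument shows that every combinatorial limit does lift, giving equality. The main obstacle is precisely this transfer step — verifying that each of the $f^\sigma$ combinatorial limit linear series smooths to a distinct point of the general fiber — and the required dimension control at every limit is the main technical payoff of the Osserman/Murray-Osserman machinery, which is also the same input needed for the more delicate $\rho = 1$ case of Theorem~\ref{t:intro2}.
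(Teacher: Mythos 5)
Your overall strategy is the paper's: degenerate to a generic elliptic chain, identify the limit linear series on the chain with standard fillings of $\sigma$ (your chain of partitions $\lambda^{(i)}$ recording vanishing orders at the nodes is exactly the paper's notion of a valid sequence, and your ``one box added per component'' count is Proposition~\ref{p:rho0}), and then transfer to the smooth curve via the Osserman/Murray--Osserman machinery. However, there is a genuine gap, and it is not quite where you place it. Your count on the chain is purely set-theoretic, while everything in the transfer step hinges on a \emph{scheme-theoretic} fact: that the Eisenbud--Harris scheme on the chain consists of \emph{reduced} points. Without that, your ``upper semicontinuity of fiber length'' gives nothing (a non-reduced special fiber could have length strictly larger than $f^\sigma$, and semicontinuity would only bound the general fiber by that larger length), and it is also precisely one of the hypotheses needed to invoke Corollary~3.4 of \cite{murrayosserman} (expected dimension, density of refined series, and reducedness of the Eisenbud--Harris structure), which is what makes the relative family of limit linear series flat and proper. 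The paper supplies this missing ingredient by a direct analysis on a single twice-pointed elliptic curve: the splitting Lemma~\ref{splitLemma} decomposes $\Grdab(E,p,q)$ as a fiber product over $\Pic^d(E)$ of rank-zero series, and Lemma~\ref{l:r0} together with Proposition~\ref{oneCurveV2}(2) then shows each component $C(\bs \alpha)$ (Corollaries~\ref{c:union_of_c} and~\ref{l:factors_of_c}) is a single reduced point. This is also how the paper deliberately avoids the pointed Gieseker--Petri-type smoothness of the general fiber that you invoke, an external theorem with characteristic caveats.

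Once reducedness is in hand, the transfer is cleaner than your sketch and the step you flag as ``the main obstacle'' disappears: flatness plus properness give constancy of fiber length, so the general fiber has length exactly $f^\sigma$ with no separate deformation or lifting argument that each combinatorial limit smooths to a distinct point; reducedness of the general fiber then follows as well (the family is \'etale along the reduced special fiber, and by properness this spreads to the whole family). So the outline is right, but to close the argument you must prove the reduced scheme structure of $\Grdab(E,p,q)$ on one elliptic component and of $\Grdab(X,p,q)$ on the chain, which is the technical content of Sections~\ref{sec:elliptic} and~\ref{sec:prelim}, rather than appealing to semicontinuity, general-fiber smoothness, and an unspecified lifting argument.
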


As we mentioned above, our strategy in the proof of  Theorem~\ref{t:intro2} is to compute the arithmetic genus of the scheme of Eisenbud-Harris limit linear series 
on a generic elliptic chain. We describe the main ingredients of the proof below.

Our aim in  Sections ~\ref{sec:combinatorics} and  \ref{sec:ponttab}  is to develop  the combinatorial techniques we need in the proof of our main Theorems,
 but some   of the results (see  Theorem~\ref{t:masterformula}, Lemma~\ref{p:upperright}, and Corollary~\ref{c:strangeidentity} ) can be of independent interest.  
In particular, we show  the following:  Pick a lattice path from the lower-left to upper-right corners of an $a\times b$ rectangle $\sigma$,
 with probability proportional to the number of standard compatible fillings of $\sigma$. 
 Then the expected number of turns in this path is exactly the harmonic mean of $a$ and $b$. 
 The set up of Section ~\ref{sec:combinatorics} is better expressed in terms of a graph (the Brill-Noether graph)
 associated to a generalization of Young tableaux called \textit{skew Young tableaux}.
  In Section \ref{sec:ponttab},  we introduce a further generalization of skew Young tableaux that we call  \textit{pontableaux} 
  and the corresponding subdivision of the Brill-Noether graph that we call  the \textit{augmented Brill-Noether graph}.
 \textit{Valid sequences}  relate pontableaux to the orders of vanishing of linear series.
  
  In Section~\ref{sec:elliptic}, we describe the schemes $\Grdab(E,p,q)$ parameterizing the space of linear series on an elliptic curve
with prescribed ramification profiles $\alpha,\beta$ at two points, in the case that that scheme has dimension at most 1.  
This gives a transversality result for two Schubert conditions on $\Grd(E),$ viewed as a Grassmann bundle over $\Pic^d(E)$.  
 Section~\ref{sec:prelim}  gives an explicit description of the scheme
 $\Grdab(X, p,q)$ of limit linear series on an elliptic chain $X$.  
 We show that $\Grdab(X,p,q)$ is a nodal curve with elliptic and rational components, whose dual graph is the augmented Brill-Noether graph. 
 
Finally, we use a result of \cite{murrayosserman} to conclude that the arithmetic genus of the nodal curve $\Grdab(X,p,q)$ of limit linear series 
coincides with the genus of the corresponding locus of linear series over a nearby smooth curve.  
Specifically, Osserman constructs a moduli functor for limit linear series \cite{osserman-functor}, in such a way that they form flat and proper families
 over one-parameter degenerations; 
 however, it is not always clear that the {\em scheme} structures on Osserman's limit linear series coincide with the Eisenbud-Harris scheme structures.
The paper \cite{murrayosserman} shows that under relatively mild conditions that are satisfied in our situation,
 the two scheme structures do coincide, and the statement on equality of genera follows.  This is described fully in Section~\ref{sec:final}.

We want to point out two directions for future work. First, our tools apply to Brill-Noether loci of any dimension
and could be used to study Brill-Noether loci of dimension bigger than one.
  Moreover, the description of the  locus of limit linear series on elliptic chains is very explicit and allows to compute invariants of the 
  Brill-Noether locus on the generic curve that are finer than the genus. Indeed, the aim of ~\cite{castorena-lopez-teixidor} was  to find the  gonality of the 
  $G^1_4(X)$ where $X$ is  a generic curve of genus 5.

\bigskip
\noindent {\bf Notational conventions.} We mention a few conventions here that will be used throughout the paper.
\begin{itemize}
\item The word ``variety'' refers to a finite-type reduced separated scheme over an algebraically closed field (not necessarily irreducible).
\item The numbers $g,r,d$ will always be nonnegative and satisfy $g-d+r \geq 0$.
\item The symbols $a_i,b_i$ will refer to vanishing orders of a linear series. The numbers $a_i$ will be in \textit{increasing} order, while the numbers $b_i$ will be in \textit{decreasing} order. The symbols $\alpha_i,\beta_i$ will be the corresponding ramification orders, defined by $\alpha_i = a_i -i,\ \beta_i = b_i + i - r$, and the symbols $\alpha,\beta$ will refer to the sequence of all of these numbers. In particular, $\alpha$ is nondecreasing and $\beta$ is nonincreasing.
\end{itemize}

\bigskip
\noindent {\bf Acknowledgments.}  We are very grateful to B.~Osserman for many useful remarks on this project, and for providing the reference to \cite{murrayosserman}. 
 We thank D.~Romik and I.~Pak for several useful pointers. MC thanks J.~Harris for helpful conversations, 
 and also thanks S.~Haddadan, S.~Hopkins, and L.~Moci for enlightening remarks that led to the final form of Theorem~\ref{t:masterformula}
  as well as the organizers of the AIM Workshop on Dynamical Algebraic Combinatorics for making those conversations possible.  
 We thank W.~Stein and SageMathCloud for providing a convenient platform for collaboration.  Finally, we thank the referee for thorough and valuable comments.  AL was supported by CAPES-Brazil. MC was supported by NSF DMS Award 1204278.  

%STAIRCASE PATHS AND YOUNG TABLEAUX
%STAIRCASE PATHS AND YOUNG TABLEAUX
\section{Staircase paths and Young tableaux}\label{sec:combinatorics}

The goal of this section is to count the number of vertices and edges in a particular graph that is related to Young tableaux, which we call the Brill-Noether graph. 
The main result is Theorem \ref{t:masterformula}. We will begin by reviewing the definitions of standard and skew-standard Young tableaux.  
Then we will come to our main new combinatorial definitions, of almost-standard tableaux and the Brill-Noether graph.
  To count the edges in the Brill-Noether graph, we will define and study a probability distribution on staircase paths in Young diagrams that seems to be new and interesting.

Throughout, let $n$ be a nonnegative integer.  A {\em partition} of $n$ is a tuple $\lambda = (\lambda_1, \ldots, \lambda_k)$ of positive integers, nonincreasing, summing to $n$. By convention, if $i > k$ then $\lambda_i = 0$.
The {\em Young diagram} of $\lambda$ is an array of boxes that has 
$\lambda_i$ boxes, left-justified, in the $i^{th}$ row (This convention for drawing diagrams is known as {\em English notation}).  We will often identify partitions with their Young diagrams and vice versa, without further mention.
The {\em conjugate} partition to $\lambda$ is the partition, denoted $\lambda^*$, obtained from $\lambda$ by reflecting over the diagonal.  In other words, $\lambda_i^*$ is the number of boxes in the $i^{th}$  column of $\lambda.$

A {\em standard Young tableau} of shape $\lambda$ is a filling of the Young diagram of $\lambda$
with the numbers $\{1,\ldots,n\}$, each appearing exactly once, such that the entries in each row and in each 
column are strictly increasing. See Figure~\ref{fig:staircasepath} for an example.  Write $SYT(\lambda)$ for the set of standard Young tableaux of shape $\lambda$.
Standard Young tableaux of a given shape $\lambda$ are counted by the celebrated hook-length formula, as follows.  Write $(i,j) \in \lambda$ for the box in the $i^{th}$ row and $j^{th}$ column of the Young diagram.  The {\em hook length} of box $(i,j)\in \lambda$ is defined as  
$$h(i,j) := \lam_i + \lam_j^* -i-j+1,$$
i.e.~$h(i,j)$ is one more than the number of boxes below $(i,j)$ plus the number of boxes to the right of $(i,j)$.  
Then by \cite{frt}, the number of standard Young tableaux of shape $\lambda$ is 
\begin{equation}\label{eq:hook}
f^\lam = \frac{n!}{\prod_{(i,j)\in \lam}h(i,j)}.
\end{equation}

\begin{defn}\label{d:skewYT} Consider two partitions $\lam$ and $\mu$ such that $\mu_i\,{\leq}\, \lambda_i$ for all $i$. 
The shape resulting from removing $\mu$ from $\lambda$ is called a \emph{skew Young diagram} or \emph{skew shape} and will be denoted $\lamu = \lambda\setminus\mu$.  
 We write $|\lamu|$ for the number of boxes in $\lamu$.
We say that $\lamu$ is {\em connected} if the lower left and upper right corners of $\lamu$ are connected by a walk along the edges of the boxes in the diagram. 

Suppose $|\lamu| = n$.  A \emph{skew standard Young tableau} of shape $\lamu$ is a bijective filling of the boxes of $\lamu$ with the numbers $\{1,\ldots,n\}$, such that the entries in each row and in each column are strictly increasing.
We will write $sSYT(\lamu)$ for the set of skew standard Young tableaux of shape $\lamu$, and write $f^{\lamu} = |sSYT(\lamu)|$.
\end{defn}
The number of  skew standard Young tableaux of shape $\lamu$, are counted by Aitken's determinantal formula (\cite{aitken}, see also \cite[Corollary 7.16.3]{ec2}):
\begin{equation}\label{eq:aitken}
f^{\lambda\setminus \mu} = |\lambda\!\setminus\!\mu|! \det \left(\frac{1}{(\lambda_i-i-\mu_j+j)!}\right)_{i,j \in \{1,2,\cdots,k\}}.
\end{equation}
Here $k$ denotes the number of parts of $\lambda$, and we interpret $1/m! = 0$ when $m$ is negative.  When $\mu=(\emptyset)$ then the formula~\eqref{eq:aitken} specializes to the more explicit hook-length product formula 
~\eqref{eq:hook}.
When we refer to skew Young diagrams, we include Young diagrams as the special case $\mu=(\emptyset)$.

Next we will define almost-standard Young tableaux and the Brill-Noether graph.

\begin{defn}\label{d:almost}
Let $\lamu$ be a skew shape with $n$ boxes.  We define an {\em almost-standard skew Young tableau} of shape $\lamu$ to be an injective numbering of the boxes of $\lamu$ with numbers chosen from $\{1,\ldots,n+1\}$, such that the entries in each row and in each column are strictly increasing.    Write $aSYT(\lamu)$ for the set of almost-standard skew Young tableaux of shape $\lamu$.   \end{defn}

\begin{defn}\label{d:bngraph}
Let $\lamu$ be a skew shape with $n$ boxes.  We define the {\em Brill-Noether graph} $BN(\lamu)$ as follows: the vertices of $BN(\lamu)$ are the almost-standard skew Young tableaux of shape $\lamu$, and two vertices are adjacent in $BN(\lamu)$ if they differ in exactly one box.
\end{defn}

\noindent Figure~\ref{fig:bngraph} shows the Brill-Noether graph on the $2\times 2$ square.

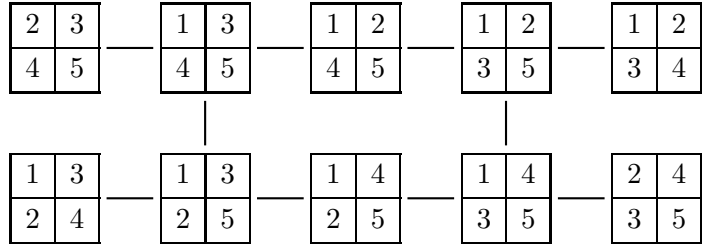
\begin{figure}
\begin{tikzpicture}[inner sep=0in,outer sep=0in]
\node (a) at (-6,3){\begin{varwidth}{5cm}{
\begin{ytableau}
2 & 3\\
4 & 5
\end{ytableau}}\end{varwidth}};

\node (b) at (-2,3){\begin{varwidth}{5cm}{
\begin{ytableau}
1 & 2\\
4 & 5
\end{ytableau}}\end{varwidth}};

\node (j) at (-4,3){\begin{varwidth}{5cm}{
\begin{ytableau}
1 & 3\\
4 & 5
\end{ytableau}}\end{varwidth}};

\node (c) at (0,3){\begin{varwidth}{5cm}{
\begin{ytableau}
1 & 2\\
3 & 5
\end{ytableau}}\end{varwidth}};

\node (d) at (2,3){\begin{varwidth}{5cm}{
\begin{ytableau}
1 & 2\\
3 & 4
\end{ytableau}}\end{varwidth}};

\node (e) at (2,1){\begin{varwidth}{5cm}{
\begin{ytableau}
2 & 4\\
3 & 5
\end{ytableau}}\end{varwidth}};

\node (f) at (0,1){\begin{varwidth}{5cm}{
\begin{ytableau}
1 & 4\\
3 & 5
\end{ytableau}}\end{varwidth}};

\node (g) at (-2,1){\begin{varwidth}{5cm}{
\begin{ytableau}
1 & 4\\
2 & 5
\end{ytableau}}\end{varwidth}};

\node (h) at (-4,1){\begin{varwidth}{5cm}{
\begin{ytableau}
1 & 3\\
2 & 5
\end{ytableau}}\end{varwidth}};

\node (i) at (-6,1){\begin{varwidth}{5cm}{
\begin{ytableau}
1 & 3\\
2 & 4
\end{ytableau}}\end{varwidth}};

\path[thick, black] ([xshift=3pt]a.east) edge  ([xshift=-3pt]j.west);
\path[thick, black] ([xshift=3pt]j.east) edge ([xshift=-3pt]b.west);
\path[thick, black] ([xshift=3pt]b.east) edge ([xshift=-3pt]c.west);
\path[thick, black] ([xshift=3pt]c.east) edge ([xshift=-3pt]d.west);

\path[thick, black] ([xshift=3pt]i.east) edge ([xshift=-3pt]h.west);
\path[thick, black] ([xshift=3pt]h.east) edge ([xshift=-3pt]g.west);
\path[thick, black] ([xshift=3pt]g.east) edge ([xshift=-3pt]f.west);
\path[thick, black] ([xshift=3pt]f.east) edge ([xshift=-3pt]e.west);

\path[thick, black] ([yshift=-3pt]j.south) edge ([yshift=3pt]h.north);
\path[thick, black] ([yshift=3pt]f.north) edge ([yshift=-3pt]c.south);
\end{tikzpicture}
\caption{The Brill-Noether graph $BN((2,2))$}
\label{fig:bngraph}
\end{figure}

For our intended application, we must count the number of vertices and the number of edges of $BN(\lamu)$. It is relatively straightforward to see that the number of vertices is $(n+1)f^\sigma$ (see Lemma \ref{l:numedges}), but the number of edges is more complicated. Our basic tool in computing the number of edges is a probability distribution on staircase paths in Young diagrams, described by the following two definitions.

\begin{defn}
Let $\lamu$ be a connected skew shape with $n$ boxes.  
A {\em staircase path} in $\lamu$ is a path $s$ from the lower left corner to the upper right corner of $\lamu$ that uses only right-steps and up-steps.  
 A {\em turn} in $s$ is a consecutive sequence of steps (right,up) or (up,right) with the property that both steps border a common box of $\lamu$.  
 These turns will be called left and right turns in $s$, respectively.  
\end{defn}

We emphasize that we do not consider a change of direction in $s$ to be a turn unless both of the steps in question border a common box in $\lamu$.  For example, the staircase path in Figure~\ref{fig:staircasepath} has three left turns and one right turn, as shown.  If both steps of a turn border box $(i,j)\in\lamu$, we will say that the turn lies in box $(i,j)$ for short; and we will also say that it lies in the $i^{th}$ row and $j^{th}$ column of $\lamu$.

\begin{figure}
\begin{tikzpicture}[inner sep=0in,outer sep=0in]
\node (s) {\begin{varwidth}{5cm}{
\begin{ytableau}
1 & 2 &3&5&6\\
4 & 7&9&10\\
8 & 11&12
\end{ytableau}}\end{varwidth}};
\draw[line width=2.5pt,blue] ([yshift=0.01cm]s.south west)--([xshift=0.6cm,yshift=0.01cm]s.south west)--([xshift=.6cm,yshift=.6cm]s.south west)--([xshift=0.9cm, yshift=-1.2cm]s.north)--([xshift=0.9cm, yshift=-.6cm]s.north)--([xshift=1.49cm, yshift=-.6cm]s.north)--([yshift=-0.01cm]s.north east);
\draw[fill=blue] ([xshift=0.6cm,yshift=0.01cm]s.south west) circle (3pt);
\draw[ultra thick, color=blue] ([xshift=0.6cm,yshift=0.6cm]s.south west) circle (4pt);
\draw[fill=blue] ([yshift=-0.6cm]s.north east) circle (3pt);
\draw[fill=blue] ([xshift=0.9cm, yshift=-1.2cm]s.north) circle (3pt);
\end{tikzpicture}
\caption{A staircase path in a standard Young tableau of shape $(5,4,3).$ The left and right turns are indicated with solid and open dots respectively.}
\label{fig:staircasepath}
\end{figure}
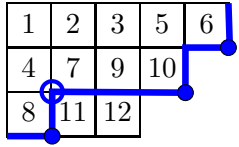

Now, for any $T\in sSYT(\lamu)$ and $m\in \{1,\ldots,n+1\}$, the pair $(T,m)$ naturally defines a staircase path in $\lamu$ that we will denote $s(T,m)$.  
Namely, $s$ is the unique staircase path which divides the entries $<m$ from the entries $\ge m$. 
For example, in Figure~\ref{fig:staircasepath}, the tableau $T$ and $m=11$ determine the staircase path shown.
This allows us to define the Brill-Noether probability distribution on staircase paths as follows.

\begin{defn}\label{d:u}
Let $\lamu$ be a connected skew shape with $n$ boxes. We let $\mu_{BN}$ denote the probability distribution on staircase paths in $\lamu$ obtained by 
\begin{itemize}
\item picking $T\in sSYT(\lamu)$ uniformly at random, 
\item picking $m\in \{1,\ldots,n+1\}$ uniformly at random,
\end{itemize}
and choosing the path $s(T,m)$.  We write $E_\sigma$ for the expected number (i.e.~average number) of turns in a staircase path in $\sigma$ chosen according to the distribution $\mu_{BN}$.  
\end{defn}
\noindent Thus, a staircase path appears in $\mu_{BN}$ with probability proportional to the number of standard fillings of $\lamu$ with which it is compatible, i.e.~with which it divides smaller entries from larger ones.  Note that $\mu_{BN}$ is completely different from the uniform distribution on staircase paths in $\lamu$.  

\begin{defn}\label{d:compression}
Let $T$ be an almost-standard skew Young tableau of shape $\lamu$ of size $n$. 
Let $m \in\{1,\ldots,n+1\}$ be the unique label not appearing in $T$.  
The {\em compression} of $T$, denoted $c(T)$, is the skew standard Young tableau of shape $\lamu$ obtained from $T$ by decrementing each entry of $T$ greater than $m$.
\end{defn}

\noindent Figure~\ref{fig:compression} shows an example.  Note that compression is a map $aSYT(\lamu)\rightarrow SYT(\lamu)$ all of whose fibers have size $n+1$.  

\begin{figure}
\begin{tikzpicture}[inner sep=0in,outer sep=0in]
\node (s) {\begin{varwidth}{5cm}{
\begin{ytableau}
\none & \none & 1 \\ 
\none & 3 & 6\\
4 & 5 
\end{ytableau} \qquad
\begin{ytableau}
\none & \none & 1 \\ 
\none & 2 & 5\\
3 & 4 
\end{ytableau}}\end{varwidth}};
\end{tikzpicture}
\caption{The compression of the almost-standard tableau shown on the left is the standard tableau shown on the right.}
\label{fig:compression}
\end{figure}
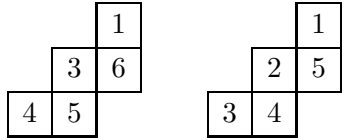

 We may now express the number of edges in $BN(\sigma)$ in terms of the expected number of turns in a staircase path chosen from $\mu_{BN}$.
 
 \begin{lemma}\label{l:numedges}
Suppose $\lamu$ is a connected skew shape with $n$ boxes. The number of vertices in the graph $BN(\lamu)$ is $$(n+1)f^\sigma.$$

The number of edges in the graph $BN(\lamu)$ is 
$$\frac{1}{2}\, (n+1) \, f^{\lamu} E_{\lamu}.$$
\end{lemma}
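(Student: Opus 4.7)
The plan is to prove the vertex and edge counts separately, using the compression map $c \colon aSYT(\sigma) \to SYT(\sigma)$ of Definition~\ref{d:compression}.

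For the vertex count, I will show that $c$ is surjective with every fiber of size $n+1$. For each $T_0 \in SYT(\sigma)$ and $m \in \{1,\dots,n+1\}$, define $T_0^{(m)}$ by incrementing every entry of $T_0$ that is at least $m$ by $1$; this preserves strict row and column monotonicity, so $T_0^{(m)} \in aSYT(\sigma)$, and its missing label is $m$. Conversely, any $T \in aSYT(\sigma)$ with missing label $m$ satisfies $T = c(T)^{(m)}$, so $c^{-1}(T_0) = \{T_0^{(m)} : 1 \le m \le n+1\}$. This gives $|aSYT(\sigma)| = (n+1)f^\sigma$.

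For the edge count, I would sum degrees over the vertices. The starting observation is a rigidity: if distinct $T_1, T_2 \in aSYT(\sigma)$ are adjacent, differing only at a box $(i,j)$, then $T_1(i,j)$ must be the label missing from $T_2$ and vice versa, since at every other box their common label also appears in the other tableau. Therefore, the edges incident to a fixed vertex $T$ with missing label $m$ are in bijection with boxes $(i,j) \in \sigma$ for which the substitution $T(i,j) \mapsto m$ yields another almost-standard tableau. Splitting into the subcases $T(i,j) < m$ and $T(i,j) > m$, one of the two pairs of row/column inequalities at $(i,j)$ is automatic in each case, and validity reduces to a condition comparing $m$ with the entries at the two ``outward'' in-$\sigma$ neighbors of $(i,j)$.

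The core identification is that these valid swap positions correspond bijectively with the turns of the staircase path $s(T_0, m)$, where $T_0 = c(T)$. In the subcase $T(i,j) < m$, validity unwinds to $T_0(i,j) < m$ together with $T_0(i+1,j), T_0(i,j+1) \ge m$ (vacuous if those cells lie outside $\sigma$), which is precisely the condition for $s(T_0, m)$ to make a (right, up)-turn at $(i,j)$: a right-step along the bottom edge of $(i,j)$ followed immediately by an up-step along the right edge of $(i,j)$. The subcase $T(i,j) > m$ gives (up, right)-turns at $(i,j)$ by the symmetric argument. Therefore $\deg(T)$ equals the number of turns of $s(T_0, m)$, and summing over $T = T_0^{(m)}$,
\[
\sum_{T \in aSYT(\sigma)} \deg(T) \;=\; \sum_{T_0 \in SYT(\sigma)} \sum_{m=1}^{n+1} \#\{\text{turns of } s(T_0, m)\} \;=\; (n+1)\, f^\sigma \, E_\sigma,
\]
where the last equality is Definition~\ref{d:u}. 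Dividing by $2$ yields the claimed edge count.

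The principal obstacle I anticipate is the turn-versus-swap correspondence, especially handling the boundary of $\sigma$ cleanly: when a would-be neighbor of $(i,j)$ lies outside $\sigma$, the relevant validity inequality is vacuous, and one must check that the ``both steps border a common box of $\sigma$'' clause in the definition of turn excludes exactly the concave-corner direction changes where no valid swap can occur. The rest is routine combinatorial verification, but it requires careful alignment of the various conventions for drawing tableaux, orienting the path, and reading off the neighbors of boxes.
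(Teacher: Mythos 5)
Your proof is correct and follows essentially the same route as the paper's: the bijection $aSYT(\sigma)\cong sSYT(\sigma)\times\{1,\ldots,n+1\}$ via compression for the vertex count, and the identification of the legal single-box replacements at a vertex with the turns of the associated staircase path, followed by the handshake/degree-sum argument, for the edge count. The boundary issue you flag (a direction change counts as a turn only when both steps border a common box of $\sigma$, exactly matching the requirement that the swap site be a box of $\sigma$) is handled implicitly in the paper's proof in the same way, so there is no gap.
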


\begin{proof}
First, consider the map
$$aSYT(\lamu) \longrightarrow SYT(\lamu)\times \{1,\!\ldots\!,n\!+\!1\}$$
sending $T' \in aSYT(\lamu)$  to $(c(T'), m)$.
This map is bijective: given $(T,m)$, the staircase path $s = s(T,m)$ divides the boxes of $\lamu$ into two parts, and incrementing the entries of $T$ southeast of $s$ yields the aSYT that was sent to $(T,m)$.  Thus $BN(\lamu)$ has $(n+1)\, f^\lamu$ vertices, as claimed.

Now suppose $T'$ is a vertex of $BN(\lamu)$, i.e.~an almost-standard skew Young tableau of shape $\lamu$; we wish to compute the degree of $T'$ in $BN(\lamu)$.
Let $m \in \{1,\ldots,n+1\}$ be the number missing from $T'$. 
The vertices in $BN(\lamu)$ adjacent to $T'$ correspond to those aSYT which are obtained from $T'$ by replacing one of the entries of $T'$ with the missing number $m$, so that the result is again an aSYT.  
The key observation is that the entries of $T'$ which may be legally replaced by $m$ correspond precisely to the turns in $s(c(T'), m)$. 
Specifically, a right turn (resp.~left turn) in box $(k,l)$ indicates that the entry in $(k,l)$ is greater than $m$ (resp.~less than $m$) and may be replaced by $m$ so that the rows and columns are still strictly increasing. 
So the degree of $T'$ in $BN(\lamu)$ is the number of turns in the path $s(c(T'), m)$, and we conclude
\begin{eqnarray*}
|E(BN(\lamu))| &=& \frac{1}{2} \, |V(BN(\lamu))| \cdot (\text{average degree of a vertex})\\
&=& \frac{1}{2}\, (n+1) \, f^{\lamu} E_{\lamu}
\end{eqnarray*}
as desired.
\end{proof}

In other words, to count edges in $BN(\sigma)$, we need to compute the expected number of turns in a staircase path in $\sigma$ chosen according to $\mu_{BN}$.   We are able to do this for {\em any} skew shape, in Theorem~\ref{t:masterformula} below.  Before stating this theorem, we fix the following notation. 

\begin{defn}\label{d:sigmai}Let $\sigma=\lambda\setminus\mu$ be a skew shape with $k$ rows.  
For each $i=1,\ldots,k$, we write $\sigma^i$ for the shape obtained from $\sigma$ by adding a box to  row  $i$ on the right, assuming that the result is again a skew shape (i.e. if $\lambda_i < \lambda_{i-1}$). 
 As before,  write $f^{\sigma^i}$ for the number of standard fillings of $\sigma^i$.  By convention, we set $f^{\sigma^i}\!=0$ if $\sigma^i$ is not a skew shape.  
Similarly, we write ${}^i\sigma$ for the shape obtained by adding a box on the left in row $i$, and we define $f^{{}^i\sigma}$ analogously. 
\end{defn}
 
\begin{thm}\label{t:masterformula}
Let $\sigma$ be any connected skew shape, with $n$ boxes and $k$ rows. 
 Then the expected number of turns in a staircase path in $\sigma$ chosen according to the probability distribution $\mu_{\text{BN}}$ is
\begin{equation}\label{eq:masterformula}
E_\sigma = 
2\left(k+ \sum_{i=1}^{k} \frac{k\!-\!i}{n\!+\!1}\cdot\frac{f^{{}^i\sigma}}{f^\sigma} - \sum_{i=1}^{k} \frac{k\!+\!1\!-\!i}{n\!+\!1}\cdot\frac{f^{\sigma^i}}{f^\sigma}\right).
\end{equation}
Moreover, the number of edges in the graph $BN(\sigma)$ is
\begin{equation}
k(n+1)f^\sigma + \sum_{i=1}^{k} (k\!-\!i)\cdot f^{{}^i\sigma} - \sum_{i=1}^{k} (k\!+1\!-\!i)\cdot f^{\sigma^i}.
\end{equation}
\end{thm}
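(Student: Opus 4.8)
The plan is to compute the expected number of turns $E_\sigma$ by linearity of expectation, summing over all boxes the probability that a turn lies in that box, and then to interpret the resulting sums combinatorially in terms of the shapes ${}^i\sigma$ and $\sigma^i$. Recall that $\mu_{\text{BN}}$ picks a pair $(T,m)$ with $T\in sSYT(\sigma)$ and $m\in\{1,\ldots,n+1\}$ uniformly, and forms the path $s(T,m)$. For each box $(i,j)\in\sigma$, let $\mathbf{1}_{(i,j)}$ be the indicator that $s(T,m)$ turns in box $(i,j)$. Then $E_\sigma = \sum_{(i,j)\in\sigma} \Pr[\text{turn in } (i,j)]$, and I would compute each probability by counting the pairs $(T,m)$ for which the path turns at $(i,j)$. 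A turn in box $(i,j)$ — of either type — corresponds to a local configuration of the four entries surrounding that box relative to the threshold $m$: a left turn in $(i,j)$ means the entry of $(i,j)$ is the unique one of the relevant neighbours that is $< m$ while its right and lower neighbours are $\ge m$, and a right turn is the mirror condition. The first step, therefore, is to reduce ``turn in box $(i,j)$'' to an explicit inequality pattern among the entry at $(i,j)$ and its neighbours, and to translate the count over $(T,m)$ into a count of standard fillings of modified shapes.

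The key maneuver I expect to carry the proof is the following dictionary between turns and box-removals. Counting pairs $(T,m)$ that turn left in box $(i,j)$ should reduce, after fixing $m$ and summing over $m$, to counting standard fillings of a shape obtained by \emph{deleting} box $(i,j)$ from $\sigma$; and the telescoping/reindexing of these deleted-box counts over a whole row collapses into the counts $f^{{}^i\sigma}$ and $f^{\sigma^i}$ for shapes with a box \emph{added} on the left or right of row $i$. Concretely, I would first establish a row-by-row identity: the total contribution to $E_\sigma$ from turns lying in row $i$ is expressible in terms of $f^{{}^i\sigma}$, $f^{\sigma^i}$, and $f^\sigma$ with the indicated coefficients. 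The bijective content here is that placing the threshold just past a box on the boundary of the path, then sliding it, accounts for the removable corner boxes; summing a removable corner count along row $i$ telescopes to the difference between an added-left-box filling and an added-right-box filling. This is the step where the precise weights $(k-i)/(n+1)$ and $(k+1-i)/(n+1)$, and the leading $2k$, will have to emerge, with the factor $2$ coming from treating left and right turns symmetrically and the factor $1/(n+1)$ from averaging over the $n+1$ choices of $m$.

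Once formula~\eqref{eq:masterformula} is in hand, the edge count is immediate: by Lemma~\ref{l:numedges} the number of edges in $BN(\sigma)$ equals $\tfrac12(n+1)f^\sigma E_\sigma$, and substituting~\eqref{eq:masterformula} clears the denominators $n+1$ and the factor $2$, producing exactly
$$
k(n+1)f^\sigma + \sum_{i=1}^{k}(k-i)\,f^{{}^i\sigma} - \sum_{i=1}^{k}(k+1-i)\,f^{\sigma^i}.
$$
So the edge statement is a routine algebraic consequence, and no separate argument is needed for it beyond citing Lemma~\ref{l:numedges}.

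I expect the main obstacle to be the bookkeeping in the second paragraph: correctly identifying, for an arbitrary (not necessarily rectangular) connected skew shape, which boundary boxes of a given row contribute turns, and verifying that the alternating telescoping sum of removable-corner counts genuinely collapses to $f^{{}^i\sigma} - f^{\sigma^i}$ with the stated row-dependent coefficients rather than some more complicated boundary expression. The subtlety is that in a general skew shape a box can be added on the right of row $i$ only when $\lambda_i<\lambda_{i-1}$ (and similarly for the left), so the convention $f^{\sigma^i}=0$ when the result is not a skew shape must be shown to match exactly the cases where the corresponding corner configuration cannot occur in $\sigma$. I would handle this by treating the interaction of the staircase path with each row's left and right boundary separately, checking the degenerate cases (top and bottom rows, rows where $\mu$ or $\lambda$ has an equal adjacent part) against the convention, so that the general connected skew shape is covered uniformly.
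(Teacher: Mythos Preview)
Your proposal is too speculative to count as a proof, and the specific mechanism you sketch does not match how the formula actually arises.

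First, the claim that ``counting pairs $(T,m)$ that turn left in box $(i,j)$ reduces to counting standard fillings of $\sigma$ with box $(i,j)$ deleted'' is unsubstantiated. For a generic interior box $(i,j)$, deleting it does not even yield a skew shape, and the number of $m$ for which $s(T,m)$ turns left at $(i,j)$ is $\min\bigl(T(i,j{+}1),T(i{+}1,j)\bigr)-T(i,j)$, which is not visibly a filling count of any fixed shape. Nor does a telescoping along row $i$ collapse to $f^{{}^i\sigma}-f^{\sigma^i}$: in the final formula the coefficient of $f^{\sigma^i}$ is $(k{+}1{-}i)/(n{+}1)$, which depends on the total number of rows $k$, so the ``row $i$ contribution'' cannot be a function of row $i$ data alone.

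Second, you are missing the two ingredients that actually drive the argument. The paper first proves that left and right turns are equally likely \emph{at every fixed box} (Lemma~\ref{l:involution}); this is not obvious and is established via the half-edge involution on $BN(\sigma)$, not by any direct symmetry of tableaux. You invoke ``treating left and right turns symmetrically'' to get the factor $2$, but never justify it. The paper then does \emph{not} compute $\PP(\text{turn in row }i)$ in closed form; instead it writes the easy identity
\[
\PP(R_{i+1})=\PP(R_i)+\PP(R_{i+1}\text{ and not }L_i)-\PP(L_i\text{ and not }R_{i+1})
\]
and computes the two correction terms. The connection to $f^{\sigma^{i}}$ and $f^{{}^i\sigma}$ enters only here, via Lemma~\ref{p:upperright}: the expected value of $\max(T(b),T(c))$ for the two boxes flanking the outward corner between rows $i$ and $i{+}1$ equals $n{+}1-f^{\sigma^{i+1}}/f^\sigma$, and this is exactly what controls $\PP(L_i\text{ and not }R_{i+1})$. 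Summing the recursion from $\PP(R_1)=1-\frac{1}{n+1}f^{\sigma^1}/f^\sigma$ is what produces the coefficients $k{+}1{-}i$ and $k{-}i$, as the $i$th correction term gets counted $k{-}i$ times. Your proposal contains neither of these lemmas nor any substitute for them.

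Your deduction of the edge count from $E_\sigma$ via Lemma~\ref{l:numedges} is correct and matches the paper.
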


The proof of Theorem~\ref{t:masterformula} relies on two key lemmas of independent interest.  We will state and prove these lemmas, and then return to prove the theorem.   

First, we show that left turns happen just as often as right turns at any given box $(i,j)$.  The correspondence is not immediate; rather, the proof makes surprising use of the structure of the Brill-Noether graph.

\begin{lemma}\label{l:involution}
Let $\lamu$ be a connected skew shape with $n$ boxes, and choose a staircase path $s$ in $\lamu$ according to the distribution $\mu_{BN}$.  Let $(i,j)\in \lamu$ be any box.  Then 
$$\PP(s \text{ has a right turn at box }(i,j)) = \PP(s \text{ has a left turn at box }(i,j)).$$
\end{lemma}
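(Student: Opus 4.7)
My plan is to translate the statement about paths under $\mu_{BN}$ into a counting statement about edges of $BN(\lamu)$, using the bijection $T' \leftrightarrow (c(T'), m)$ between $aSYT(\lamu)$ and $SYT(\lamu) \times \{1,\ldots,n+1\}$ established in the proof of Lemma \ref{l:numedges}. Under this bijection the distribution $\mu_{BN}$ corresponds to the uniform distribution on $aSYT(\lamu)$, so it suffices to show that the number of $T' \in aSYT(\lamu)$ whose path $s(c(T'), m)$ (with $m$ the missing entry of $T'$) has a left turn at $(i,j)$ equals the corresponding count for right turns.

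The key observation, already at work in the degree computation of Lemma \ref{l:numedges}, is that a turn of $s(c(T'),m)$ at the box $(i,j)$ is equivalent to the entry $T'(i,j)$ being replaceable by $m$ so as to remain an aSYT. In that case, the swap produces a neighbor $T''$ of $T'$ in $BN(\lamu)$, and the edge $\{T', T''\}$ differs from $T'$ at exactly the box $(i,j)$. The turn is a left turn precisely when $T'(i,j) < m$, and a right turn precisely when $T'(i,j) > m$.

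From here, the proof proceeds by a reindexing of edges incident to box $(i,j)$. Given any edge $e = \{T'_1, T'_2\}$ of $BN(\lamu)$ whose endpoints differ at $(i,j)$ with values $a < b$ there, the tableau $T'_1$ is missing $b$ while $T'_2$ is missing $a$. Under the bijection, $T'_1$ corresponds to a pair $(c(T'_1), b)$ whose path has a left turn at $(i,j)$ (since $a<b$), while $T'_2$ corresponds to a pair $(c(T'_2), a)$ whose path has a right turn at $(i,j)$ (since $b>a$). Thus each edge of $BN(\lamu)$ incident to box $(i,j)$ contributes exactly one left turn and exactly one right turn at $(i,j)$, and summing over all such edges yields the desired equality of counts, hence of probabilities.

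The main point to verify carefully is that the turn condition at $(i,j)$ is genuinely equivalent to the replaceability of $T'(i,j)$ by $m$, rather than the weaker condition that the path merely pass through the corner of $(i,j)$. This equivalence relies on the standard row/column inequalities in $c(T')$ applied to the entries in boxes $(i\pm 1, j)$ and $(i, j\pm 1)$, combined with the fact that $s(c(T'),m)$ is by definition the boundary between entries $<m$ and entries $\ge m$ in $c(T')$: both edges of $(i,j)$ required by a turn are then forced to lie on the separating boundary. Once this local equivalence is pinned down, the bijective argument above closes the proof.
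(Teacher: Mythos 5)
Your argument is correct and is essentially the paper's proof: the paper phrases the same pairing via the involution on half-edges of $BN(\lamu)$ (sending $(T,m,\tau)$ to the half-edge at the other endpoint of the corresponding edge), which is exactly your matching of the two endpoints of each edge whose tableaux differ at box $(i,j)$, one giving a left turn and the other a right turn. The key equivalence you flag, between turns at $(i,j)$ and legal replaceability of the entry there by the missing label, is likewise the observation the paper invokes from its proof of Lemma~\ref{l:numedges}.
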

\begin{proof}
First, by a {\em half-edge} of a graph we mean a pair $(v, e) \in V(G) \times E(G)$ such that $e$ is incident to $v$.  
There is an obvious involution $\iota$ on half-edges of any graph, sending $(v,e=vw)$ to   $(w,e)$.  
Now, by the proof of Lemma~\ref{l:numedges}, we see that the vertices of $BN(\lamu)$ are in bijection with pairs $(T,m)\in sSYT(\lamu)\times\{1,\ldots,n+1\}$.  
Furthermore, the half-edges at the vertex corresponding to $(T,m)$ are in bijection with the turns in the staircase path $s(T,m)$. We will write $(T,m,\tau)$ to denote the half-edge corresponding to a turn $\tau$ in the staircase path corresponding to $(T,m)$.

Suppose $\tau$ is a right turn in the path $s(T,m)$ at box $(i,j)$, and consider the half-edge $(T,m,\tau)$.  Let $(T',m',\tau') = \iota(T,m,\tau)$. 
 The aSYT corresponding to $(T',m')$ is obtained from the one corresponding to $(T,m)$ by changing the entry $m'$, located in box $(i,j)$, to $m$.
Furthermore, the fact that $\tau$ was a right turn means precisely that $m'>m$.  
Now consider the half-edge $(T',m',\tau')$. Applying the same argument, we see that $\tau'$ must be a {\em left} turn in box $(i,j)$ in the path $s(T',m')$.  

Summarizing, if a half-edge corresponds to a right turn (resp.~left turn) at box $(i,j)$, then the half-edge with which it is paired corresponds to a left turn (resp.~right turn) at box $(i,j)$.  
We have thus exhibited a bijection
\begin{eqnarray*}
&&\{(T,m,\tau)~|~\tau \text{ is a right turn of $s(T,m)$ at box $(i,j)$}\}\\
&\cong&\{(T',m',\tau')~|~\tau' \text{ is a left turn of $s(T',m')$ at box $(i,j)$}\}
\end{eqnarray*}
and this bijection proves the lemma.
\end{proof}

\noindent See Figure~\ref{fig:bijection} for an example of the bijection in the proof of Lemma~\ref{l:involution}.

\begin{figure}
\begin{tikzpicture}[inner sep=0in,outer sep=0in]
\begin{scope}
\node (s) {\begin{varwidth}{5cm}{
\begin{ytableau}
1 & 2 & 4 & 5\\
3 & 7&9&11\\
6 & 8 & 10 &12
\end{ytableau}}\end{varwidth}};
\draw[line width=2.5pt,blue] ([yshift=0.01cm]s.south west)--([xshift=1.2cm,yshift=0.01cm]s.south west)--([xshift=1.2cm,yshift=1.2cm]s.south west)--([yshift=-.6cm]s.north east)--(s.north east);
\draw[fill=blue] ([xshift=0cm,yshift=-.6cm]s.north east) circle (3pt);
\end{scope}
\begin{scope}[shift={(4,0)}]
\node (s) {\begin{varwidth}{5cm}{
\begin{ytableau}
1 & 2 & 4 & 8\\
3 & 6&9&11\\
5 & 7 & 10 &12
\end{ytableau}}\end{varwidth}};
\draw[line width=2.5pt,blue] ([yshift=0.01cm]s.south west)--([yshift=0.6cm]s.south west)--([xshift=0.6cm,yshift=0.6cm]s.south west)--([xshift=.6cm,yshift=1.2cm]s.south west)--([xshift=-.6cm, yshift=-0.6cm]s.north east)--([xshift=-0.6cm]s.north east)--([yshift=-0.01cm]s.north east);
\draw[fill=blue] ([xshift=-.6cm,yshift=0cm]s.north east) circle (3pt);
\end{scope}
\end{tikzpicture}
\caption{Lemma~\ref{l:involution} establishes a bijection between pairs $(T,m)$ such that $s(T,m)$ has a right turn at $(i,j)$ and pairs $(T',m')$
 such that $s(T',m')$ has a left turn at $(i,j)$.  
 This figure illustrates one instance of this bijection, when $(i,j) = (1,4)$.  Here, $T$ and $T'$ are as shown and $m=9, m'=5$.\medskip}
\label{fig:bijection}
\end{figure}
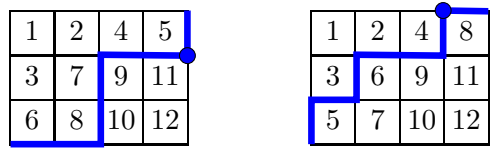

The second key lemma can be stated completely independently of staircase paths and Brill-Noether graphs.  It addresses, in one case, the very natural question: what is the expected value of a given box in a randomly chosen standard Young tableau of fixed shape?   

\begin{lemma}\label{p:upperright}
Let $\sigma$ be a skew shape with $n$ boxes. 
\begin{enumerate}[(i)]
\item
The expected value of the last entry of the first row of a uniformly chosen skew standard Young tableau of shape $\sigma$ is
$$n+1-f^{\sigma^1}/f^{\sigma},$$
where $\sigma^1$ denotes the skew shape obtained from $\sigma$ by adding a box to the first row on the right.  

\item More generally, suppose $b=(i,j)$ is the last box of the $i^{th}$ row of $\sigma$, and suppose the box 
$c=(i-1,j+1)$ due northeast of $b$ also lies in $\sigma.$ 
 Then the expected value of the maximum value in boxes $b$ and $c$ of a uniformly chosen skew standard Young tableau of shape $\sigma$ is
$$n+1-f^{\sigma^i}/f^{\sigma},$$
where $\sigma^i$ denotes the skew shape obtained from $\sigma$ by adding a box to the $i^{th}$ row on the right.  

\end{enumerate}
\end{lemma}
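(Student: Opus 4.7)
The plan is to prove both (i) and (ii) at once by observing that (i) is the case $i=1$ of (ii), with the convention that when the box $c = (i-1, j+1)$ does not exist (precisely the case $i=1$) the ``maximum of entries in $b$ and $c$'' reduces to the single entry $X_b$ in box $b$. The hypothesis that $c \in \sigma$ is then vacuous for $i=1$, and $\sigma^1$ is automatically a skew shape since row $1$ has no row above it to obstruct adding a box on the right.

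The heart of the argument will be a compression/expansion bijection
$$\Phi : SYT(\sigma^i) \longrightarrow \bigl\{(T, N) : T \in SYT(\sigma),\ 1 \le N \le n+1,\ M(T) < N\bigr\},$$
where $M(T)$ denotes the maximum of the entries of $T$ in boxes $b$ and $c$ (or simply $X_b$ when $c$ is absent). Given $T' \in SYT(\sigma^i)$, I set $\Phi(T') = (T, N)$, where $N$ is the entry of $T'$ in the added box at position $(i, \lambda_i + 1)$, and $T$ is obtained by removing that box and decrementing every remaining entry that exceeds $N$. The inverse takes a pair $(T, N)$ satisfying $M(T) < N$, increments every entry of $T$ that is at least $N$, and places $N$ in the new box at position $(i,\lambda_i+1)$.

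Once the bijection is established, a straightforward double-count yields
$$f^{\sigma^i} \;=\; \sum_{T \in SYT(\sigma)} \#\{N : M(T) < N \le n+1\} \;=\; (n+1)\,f^\sigma - f^\sigma \cdot \E[M],$$
and rearranging gives the desired formula $\E[M] = n+1 - f^{\sigma^i}/f^\sigma$.

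The main point requiring care is verifying that $M(T) < N$ is exactly the condition making the expansion step produce a valid SYT of shape $\sigma^i$. For necessity, in $T'$ the new box's entry $N$ must strictly exceed its left neighbor $X_b$ and, when $c$ exists, its upper neighbor $X_c$; since both $X_b, X_c < N$, the decrementing step leaves them unchanged, so in $T$ the entries in boxes $b$ and $c$ are still $X_b$ and $X_c$, yielding $M(T) < N$. For sufficiency, if $M(T) < N$ then placing $N$ in the new box is compatible with both neighbors, and uniformly incrementing the other entries that are $\ge N$ preserves strict increase along all rows and columns. I do not foresee any further obstacle beyond this bookkeeping.
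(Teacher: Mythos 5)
Your argument is correct and is essentially the paper's own proof: your bijection $\Phi$ onto pairs $(T,N)$ with $M(T)<N$ is just an explicit reformulation of the paper's map $sSYT(\sigma^i)\to sSYT(\sigma)$ (erase the added box and compress), whose fiber over $T$ the paper counts as $n+1-\max(T(b),T(c))$, and the concluding double count is identical. Treating (i) as the degenerate case of (ii) also matches the paper.
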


\begin{remark}
In the case that $\sigma$ is non-skew, the quantities in Lemma~\ref{p:upperright} can be calculated explicitly via the hook-length formula, 
using the fact that the only hook lengths that change upon adding a box to $\sigma$ are the ones in the same row or the same column as the new box. 
 For example, it follows that the expected value of the upper right corner box of a uniformly chosen standard Young tableau of shape $\sigma$ is 
\begin{equation}\label{eq:upperright}
(n+1)\left(1-\prod_{j=1}^{\lamu_1} \frac{\lamu_1 + \lamu_j^* -j}{\lamu_1 + \lamu_j^* -j + 1} \right).
\end{equation}

We also remark that by rotating $180$-degrees and replacing each number $i$ with $n+1-i$, we immediately obtain the analogous result on the left border of $\sigma.$ 
 Namely, if $b$ is the leftmost box of the $i^{th}$ row if $\sigma$ and $c\in\sigma$ is the box due southwest of it, then we have
$$\displaystyle \mathbb{E}\left(\min (T(b),T(c)) \right) = f^{{}^i\sigma}/f^\sigma,$$
for a uniformly chosen $T\in sSYT(\sigma)$.
\end{remark}
\begin{proof}[Proof of Lemma~\ref{p:upperright}]

Part (i) is simply a degenerate special case of part (ii), which we now prove.  Consider the shape $\sigma^i$ obtained by adding a box on the right of the $i^{th}$ row of $\sigma$.  Note $\sigma^i$ is again a skew shape, by the assumption that the box $(i-1,j+1)$ also lies in~$\sigma$.  

Consider the map
$$\Phi\colon sSYT(\sigma^i)\rightarrow sSYT(\sigma)$$
defined as follows: given $T'\in sSYT(\sigma^i)$, erase the last box in the $i^{th}$ row of $T'$, and take the compression of the resulting almost-standard skew Young tableau. 

For example, if $\sigma = (5,2) \setminus (1)$ and $i=2$, then $\Phi$ sends each of
$$ \young (:1267,345)\qquad \young(:1257,346)\qquad \young (:1256,347) $$
\smallskip
to the following tableau.
\smallskip
$$\young(:1256,34)$$
\smallskip
Now given $T\in sSYT(\sigma)$, we have $$|\Phi^{-1}(T)| = n+1-\max (T(b), T(c)),$$ since an element in 
$\Phi^{-1}(T)$ is obtained from $T$ by picking any $\alpha \in \{\max (T(b), T(c))\!+\!1, \ldots, n+1\}$, incrementing all entries of $T$ that are at least $\alpha$, and then writing $\alpha$ in the extra box.  This is illustrated in the example above.

Now double-counting the size of the domain, we have
$$f^{\sigma^i} = \sum_{T\in sSYT(\sigma)}\!(n+1-\max (T(b), T(c))) = f^{\sigma} \cdot (n+1 - \E(\max (T(b), T(c)))).$$
We conclude $$\E \left( \max (T(b), T(c)) \right) = n+1-f^{\sigma^i}/f^{\sigma}.$$  

\end{proof}

Lemma~\ref{p:upperright} is new as far as we know.  In general, it is very natural to ask for an explicit  formula, given a partition $\lambda$ of $n$, for the expected value of any given box of a uniformly chosen standard tableau of shape $\lambda$.  This is likely to be difficult to achieve in general.  A formula for the expected value of box $(2,1)$ for any Young diagram was given recently in \cite{rn}. Apart from their result, our Lemma~\ref{p:upperright}(i), and trivial cases like the box $(1,1)$, we do not know of any other results along these lines.

Now we turn to the proof of Theorem~\ref{t:masterformula}.

\begin{proof}[Proof of Theorem~\ref{t:masterformula}]
Let $\sigma$ have $n$ boxes and $k$ rows. 
 If $s$ is a staircase path in $\sigma$, we will say that $s$ has a right turn (respectively left turn) in row $i$ if it has a right turn (respectively left turn) in box $(i,j)$ for some $j$.  
 For $i=1,\ldots,k$, write $R_i$ and $L_i$ for the event that $s$ has a right turn in row $i$, respectively a left turn in row $i$. 
  Clearly, any staircase path has either 0 or 1 right turns (respectively, left turns) in a given row, so
\begin{equation}\label{eq:esigma}
\begin{aligned}
E_\sigma &= \PP(R_1) + \cdots + \PP(R_k) + \PP(L_1)+\cdots+\PP(L_k) \\
&= 2(\PP(R_1) + \cdots + \PP(R_k)),
\end{aligned}
\end{equation}
where the latter equality follows from Lemma~\ref{l:involution}.
So we wish to calculate $\PP(R_i)$ for each $i$.  We will do this by expressing $\PP(R_{i+1})$ in terms of $\PP(R_i)$ and then summing up; we'll take advantage of the fact that $\PP(R_i)=\PP(L_i)$ throughout the argument.

First, we note
\begin{equation}
\PP(R_{i+1})\,+\, \PP(L_i \text{ and not } R_{i+1})\, =\, \PP(R_{i+1} \text{ and not } L_i) \,+ \,\PP(L_i),\end{equation}
since both are equal to $\PP(R_{i+1} \text{ or } L_i).$  Using $\PP(L_i) = \PP(R_i),$ we get
\begin{equation}\label{eq:recurse}
\PP(R_{i+1}) \,=\,  \PP(R_i) \,+ \,\PP(R_{i+1} \text{ and not } L_i) \,-\,\PP(L_i \text{ and not } R_{i+1}).\end{equation}

The next claim is then the key step to relating $\PP(R_i)$ and $\PP(R_{i+1})$.
\begin{claim}\label{c:rowdependence}
For each $i$, we have
\begin{eqnarray}
\PP(L_i \text{ and not } R_{i+1}) &=& \frac{1}{n\!+\!1}\frac{f^{\sigma^{i+1}}}{f^\sigma}\label{eq:li}\\[.1cm] 
\PP(R_{i+1} \text{ and not } L_i) &=& \frac{1}{n\!+\!1}\frac{f^{{}^i\sigma}}{f^\sigma}.\label{eq:ri+1}
\end{eqnarray}
In particular, 
\begin{equation}\label{eq:r1}
\PP(\text{not }R_1) = \frac{1}{n\!+\!1}\frac{f^{\sigma^{1}}}{f^\sigma} \qquad \text{and}\qquad \PP(\text{not }L_k) = \frac{1}{n\!+\!1}\frac{f^{{}^k\sigma}}{f^\sigma}.
\end{equation}
\end{claim}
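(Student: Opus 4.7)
The plan is to translate the events in the claim into explicit arithmetic conditions on the pair $(T,m)$, and then to recognize the resulting counts as outputs of Lemma~\ref{p:upperright}.

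First I would introduce coordinates for the path $s=s(T,m)$. For each $i\in\{1,\ldots,k\}$, $s$ has exactly one up-step passing through the horizontal strip of row $i$; let $c_i$ denote its column. Since $T$ is row-increasing, $c_i$ is determined by row $i$ of $T$: it is the largest $j\le\lambda_i$ with $T(i,j)<m$, and $c_i=\mu_i$ if no such $j$ exists. Inspecting the corners adjacent to the up-step of row $i$ shows that $s$ can have a turn in row $i$ only at box $(i,c_i)$ (a left turn, at corner $(i,c_i)$) or at box $(i,c_i+1)$ (a right turn, at corner $(i-1,c_i)$). Checking which of these corners actually produce turns yields: $L_i$ holds if and only if $c_i>c_{i+1}$ (taking $c_{k+1}:=\mu_k$ when $i=k$) together with $(i,c_i)\in\sigma$; while $R_{i+1}$ holds if and only if $c_i>c_{i+1}$ together with $(i+1,c_{i+1}+1)\in\sigma$. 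Thus the two events coincide in the interior of $\sigma$, and can differ only through the boundary effect of whether the candidate turn-box belongs to $\sigma$.

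Next I would translate the asymmetric events. Since $(i,c_i)\in\sigma$ is equivalent to $c_i>\mu_i$, and $(i+1,c_{i+1}+1)\in\sigma$ is equivalent to $c_{i+1}<\lambda_{i+1}$, the event ``$L_i$ and not $R_{i+1}$'' amounts to $c_{i+1}=\lambda_{i+1}$ together with $c_i>\lambda_{i+1}$. In terms of $T$ this says $T(i+1,\lambda_{i+1})<m$ and $T(i,\lambda_{i+1}+1)<m$, i.e.\ $m>\max(T(b),T(c))$ where $b=(i+1,\lambda_{i+1})$ and $c=(i,\lambda_{i+1}+1)$. The resulting count
\[
\sum_{T\in sSYT(\sigma)}\bigl(n+1-\max(T(b),T(c))\bigr)
\]
is exactly $f^{\sigma^{i+1}}$ by Lemma~\ref{p:upperright}(ii) applied with row index $i+1$; dividing by $(n+1)f^\sigma$ produces~\eqref{eq:li}. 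Identity~\eqref{eq:ri+1} is established symmetrically on the left boundary: ``$R_{i+1}$ and not $L_i$'' becomes $m\le\min(T(i,\mu_i+1),T(i+1,\mu_i))$, and the remark after Lemma~\ref{p:upperright} identifies the count as $f^{{}^i\sigma}$. The boundary identities~\eqref{eq:r1} come out as the $i=0$ and $i=k$ specializations if we treat $L_0$ and $R_{k+1}$ as vacuously true; alternatively they follow directly from Lemma~\ref{p:upperright}(i) and its left-boundary mirror applied to the last box of row $1$ and the first box of row $k$.

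The step that needs the most care is the bookkeeping for the skew boundary. When $\lambda_i=\lambda_{i+1}$, the shape $\sigma^{i+1}$ is not skew and $f^{\sigma^{i+1}}=0$ by convention, so for consistency I must verify that ``$L_i$ and not $R_{i+1}$'' is actually impossible in this case; this follows since $c_i\le\lambda_i=\lambda_{i+1}$ rules out $c_i>\lambda_{i+1}$. The analogous check when $\mu_{i+1}=\mu_i$ ensures~\eqref{eq:ri+1} is consistent with $f^{{}^i\sigma}=0$. These boundary checks are exactly what make the claim nontrivial: in the interior of $\sigma$ both $L_i$ and $R_{i+1}$ reduce to $c_i>c_{i+1}$ and would otherwise be logically equivalent.
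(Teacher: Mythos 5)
Your proposal is correct and follows essentially the same route as the paper: you identify the event ``$L_i$ and not $R_{i+1}$'' with the path hugging the right end of row $i+1$ (the paper's outward corner $X_{i+1}$), translate it into $m>\max(T(b),T(c))$ for the boxes $b=(i+1,\lambda_{i+1})$, $c=(i,\lambda_{i+1}+1)$, and invoke Lemma~\ref{p:upperright}, with the same treatment of the degenerate case $\lambda_i=\lambda_{i+1}$. The only cosmetic difference is that you handle~\eqref{eq:ri+1} by the mirrored left-boundary argument instead of the paper's $180$-degree rotation, which is an equivalent step.
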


\begin{proof}[Proof of Claim~\ref{c:rowdependence}]
The statements in~\eqref{eq:r1} are degenerate special cases of~\eqref{eq:li} and~\eqref{eq:ri+1}.  Furthermore~\eqref{eq:ri+1} is obtained directly from~\eqref{eq:li} by applying a 180-degree rotation of $\sigma$.  So it remains to prove~\eqref{eq:li}.

Suppose $s$ is a staircase path in $\sigma$ that has a left turn in row $i$.   
As usual, regard $s$ as a path starting from the lower-left corner and ending at the upper-right corner.  
Now, immediately prior to the left turn in row $i$, it must be the case that $s$ is traveling to the right along a horizontal segment lying between rows $i$ and $i+1$. 
 Immediately before that horizontal segment, $s$ must of course be traveling up. 

Now, if this up-step occurs anywhere but the right border of row $i$, then $s$ has a right turn in row $i+1$. In particular, if the $i^{th}$ row of $\sigma$ extends to the right only as far as the $(i+1)^{st}$ row, then {\em any} staircase path with a left turn in row $i$ also has a right turn in row $i+1$.  
Then the quantity on the left in~\eqref{eq:li} is zero; but so is $f^{\sigma^{i+1}}$ on the right, by our convention, which proves~\eqref{eq:li} in this case.  

So we may assume that the $i^{th}$ row of $\sigma$ extends further to the right than the $(i+1)^{st}$ row.
In this case, $s$ fails to have a right turn in row $i+1$ if and only if the up-step traversing row $i+1$ occurs on the right border of row $i+1$. 
 Let us call this up-step, followed by the next right-step, the {\em outward corner} $X_{i+1}$.  See Figure~\ref{fig:xi+1}.
\begin{figure}
\begin{tikzpicture}[inner sep=0in,outer sep=0in]
\node (s) {\begin{varwidth}{5cm}{
\begin{ytableau}
 \empty & \empty & \empty & \empty & c &\empty \\
 \empty & \empty & \empty& b\\
 \empty & \empty &\empty  
\end{ytableau}}
\end{varwidth}};
\draw[line width=2.5pt,blue] ([yshift=0.01cm]s.south west)--([xshift=0.6cm,yshift=0.01cm]s.south west)--
([xshift=0.6cm, yshift=0.6cm]s.south west)--
([xshift=2.4cm, yshift=0.6cm]s.south west)--
([xshift=2.4cm, yshift=1.2cm]s.south west)--
([xshift=3cm, yshift=1.2cm]s.south west)--
([xshift=3cm, yshift=1.8cm]s.south west)--
([xshift=3.6cm, yshift=1.8cm]s.south west);
\draw[line width=2.5pt,blue] ([xshift=2.42cm, yshift=1.05cm]s.south west)--
([xshift=2.55cm, yshift=1.2cm]s.south west);
\node at ([xshift=2.95cm, yshift=0.85cm]s.south west) {$X_{i+1}$};
\end{tikzpicture}
\caption{Illustration for the proof of Claim~\ref{c:rowdependence}.  In this example $\sigma=(6,4,3)$ and $i=1$.  Note that there is a left turn in row $i$ but no right turn in row $i+1$, because the path uses the outward corner marked $X_{i+1}$.}
\label{fig:xi+1}
\end{figure}
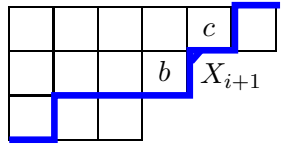

Then we have shown
$$\PP(L_i \text{ and not } R_{i+1}) = \PP(\text{$X_{i+1}$ is used}).$$
Now, pick $T\in sSYT(\sigma)$ uniformly at random, and consider the $n+1$ staircase paths
$$s(T,1), \ldots,s(T,n+1).$$
Let $b$ and $c$ denote the boxes of $\sigma$ immediately to the left of $X_{i+1}$ and immediately above $X_{i+1}$, respectively.  
Notice that a staircase path $s(T,m)$ uses the outward corner $X_{i+1}$ precisely when the tableau lying above that path contains both $b$ and $c$. 
 In other words, exactly $n+1 - \max (T(b), T(c))$ of the $n+1$ staircase paths defined by $T$ use the outward corner $X_{i+1}$.  Thus, by linearity, it follows that
\begin{eqnarray*}
\PP(\text{$X_{i+1}$ is used}) &=& 1 - \frac{\E \max (T(b), T(c))}{n+1}\\[.1cm]
&=& \frac{1}{n\!+\!1}\frac{f^{\sigma^{i+1}}}{f^\sigma} 
\end{eqnarray*}
where the last equality is by Lemma~\ref{p:upperright}.  This proves Claim~\ref{c:rowdependence}.
\end{proof}

\noindent Returning to the proof of Theorem~\ref{t:masterformula}, it follows from Equation~\eqref{eq:recurse} and Claim~\ref{c:rowdependence}  that
\begin{equation}\label{eq:relation}
\PP(R_{i+1}) = \PP(R_i) + \frac{1}{n\!+\!1}\frac{f^{{}^i\sigma}}{f^\sigma} -\frac{1}{n\!+\!1}\frac{f^{\sigma^{i+1}}}{f^\sigma}.
\end{equation}
Furthermore, by Claim~\ref{c:rowdependence}, we have $$\PP(R_1) = 1-\frac{1}{n\!+\!1}\frac{f^{\sigma^{1}}}{f^\sigma}.$$

Applying~\eqref{eq:relation} repeatedly to the expression $E_\sigma=2(\PP(R_1) + \cdots + \PP(R_k))$ from~\eqref{eq:esigma}, we get exactly the quantity in the first part of the theorem statement.  Note in particular that each term $-\frac{1}{n+1}\frac{f^{\sigma^i}}{f^\sigma}$ appears a total of $k+1-i$ times in the sum, and each term $\frac{1}{n+1}\frac{f^{{}^i\sigma}}{f^\sigma}$ appears a total of $k-i$ times.
This proves the first part of Theorem~\ref{t:masterformula}, and the second part follows from Lemma~\ref{l:numedges}.

\end{proof}

\begin{remark}
There is an implicit symmetry in the expression~\eqref{eq:masterformula} which, when unraveled, gives a nice combinatorial identity. 
 Replace $\sigma$ by a 180-degree rotation of $\sigma$ and apply the same expression.
 Equating the two expressions so obtained and simplifying gives the very simple identity:
\begin{cor}\label{c:strangeidentity}
For any skew shape $\sigma$ with  $k$ rows, we have
\begin{equation}\label{eq:strangeidentity}
\sum_{i=1}^k f^{{}^i\sigma} = \sum_{i=1}^k f^{\sigma^i}.
\end{equation}
\end{cor}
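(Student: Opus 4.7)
The plan is to invoke Theorem~\ref{t:masterformula} twice: once for $\sigma$ and once for its $180$-degree rotation $\tau$, and equate the two resulting expressions for the expected number of turns. The key observation that makes this produce an identity rather than a tautology is that rotation swaps ${}^i\sigma$ with $\sigma^{k+1-i}$ (up to rotation), so the coefficients $(k-i)$ and $(k+1-i)$ get redistributed in a nonsymmetric way.

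First I would check that $E_\sigma = E_\tau$. The $180$-degree rotation induces a bijection on staircase paths that swaps the roles of left and right turns but preserves their total number. The map $T \mapsto T'$, where $T'$ is obtained by rotating $T$ and replacing each entry $e$ by $n+2-e$, gives a bijection $sSYT(\sigma) \to sSYT(\tau)$ under which $s(T,m)$ corresponds to the rotated path $s(T',n+2-m)$; since $m \mapsto n+2-m$ permutes $\{1,\ldots,n+1\}$, the distribution $\mu_{\text{BN}}$ is preserved, whence $E_\sigma = E_\tau$.

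Next I would identify the shapes ${}^i\tau$ and $\tau^i$ in terms of shapes built from $\sigma$. Adding a box on the right in row $i$ of $\sigma$ corresponds under rotation to adding a box on the left in row $k+1-i$ of $\tau$, so $f^{\sigma^i} = f^{{}^{k+1-i}\tau}$ and symmetrically $f^{{}^i\sigma} = f^{\tau^{k+1-i}}$. Substituting these into formula~\eqref{eq:masterformula} applied to $\tau$, using $f^\sigma = f^\tau$, and re-indexing the sums by $j = k+1-i$ so that $k-i \leftrightarrow j-1$ and $k+1-i \leftrightarrow j$, the equation $E_\sigma = E_\tau$ becomes, after clearing the common factor $2/((n+1)f^\sigma)$ and cancelling the additive constants $2k$:
\begin{equation*}
\sum_{i=1}^k (k-i)\,f^{{}^i\sigma} - \sum_{i=1}^k (k+1-i)\,f^{\sigma^i} \;=\; \sum_{j=1}^k (j-1)\,f^{\sigma^j} - \sum_{j=1}^k j\,f^{{}^j\sigma}.
\end{equation*}
Collecting the $f^{{}^i\sigma}$ terms on the left and the $f^{\sigma^i}$ terms on the right, the coefficients combine to exactly $k$ on each side, yielding $k\sum_i f^{{}^i\sigma} = k\sum_i f^{\sigma^i}$, which gives the identity (dividing by $k \geq 1$).

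The main obstacle I expect is purely bookkeeping: carefully tracking the row-index reversal $i \leftrightarrow k+1-i$ under rotation and verifying that $\mu_{\text{BN}}$ is genuinely invariant under the combined rotate-and-complement map on $sSYT(\sigma)$. Once those are in place, the algebra is routine and no new combinatorial input is needed.
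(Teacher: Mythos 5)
Your proposal is correct and is essentially the paper's own proof: the identity is obtained exactly by applying Theorem~\ref{t:masterformula} to $\sigma$ and to its $180$-degree rotation (under which $f^{\sigma^i}$ and $f^{{}^{k+1-i}\sigma}$ are interchanged and $E_\sigma$ and $f^\sigma$ are preserved), equating, and simplifying as you do. The only quibble is an off-by-one: the rotate-and-complement bijection on $sSYT(\sigma)$ should replace each entry $e$ by $n+1-e$ (while $m\mapsto n+2-m$ on $\{1,\ldots,n+1\}$ is as you say), which does not affect the argument.
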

This is a curious identity on counts of skew tableaux, derived from our analysis of turns in staircase paths, that is a new result as far as we know.

Applying~\eqref{eq:masterformula} to the conjugate of $\sigma$, interchanging rows and columns, gives another identity on counts of standard fillings of skew shapes, but this is presumably more complicated.
\end{remark}

Theorem~\ref{t:masterformula} computes the expected number of turns of a staircase path in $\sigma$ chosen from $\mu_{BN}$ in any shape $\sigma.$ 
 For reference, we will work out two useful examples.  

\begin{cor}\label{t:harmonic}
Let $a$ and $b$ be two positive integers. Let $\lam = (b,\ldots,b)$, where $b$ occurs $a$ times, i.e.~the Young diagram of $\lam$ is an $a\times b$ rectangle.  Then
$$E_\lam= \frac{2ab}{a+b},$$
the harmonic mean of $a$ and $b$.
Moreover,  
the number of edges in the graph $BN(\lam)$ is 
$$\frac{ab}{a+b}\cdot (ab+1)! \prod_{j=0}^{b-1} \frac{j!}{(a+j)!}.$$
\end{cor}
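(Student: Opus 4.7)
The plan is to specialize Theorem~\ref{t:masterformula} to the rectangular shape and use the telescoping consequence of Lemma~\ref{p:upperright} to evaluate the only surviving ratio of tableau counts. Here $\lambda=(b,\ldots,b)$ (with $a$ rows) is a non-skew shape, so $n=ab$ and $k=a$.

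The first step is to observe that almost every term in~\eqref{eq:masterformula} vanishes. Since $\lambda$ has empty skew part $\mu=\emptyset$, the shape ${}^i\lambda$ (adding a box on the left of row $i$) is never a valid skew shape, so $f^{{}^i\lambda}=0$ for every $i$. Similarly, $\lambda^i$ (adding a box on the right of row $i$) is a valid skew shape only when $\lambda_i<\lambda_{i-1}$, i.e.\ only for $i=1$; for $i\ge 2$ the resulting row would exceed its predecessor in length. Therefore Theorem~\ref{t:masterformula} collapses to
$$E_\lambda \;=\; 2\Bigl(a \;-\; \frac{a}{ab+1}\cdot\frac{f^{\lambda^1}}{f^\lambda}\Bigr).$$

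Next I would evaluate $f^{\lambda^1}/f^\lambda$ using formula~\eqref{eq:upperright} from the remark after Lemma~\ref{p:upperright}, which expresses the expected value of the upper-right corner box in terms of this ratio. For the $a\times b$ rectangle, $\lambda_j^*=a$ for all $j=1,\ldots,b$, so the product in~\eqref{eq:upperright} telescopes:
$$\prod_{j=1}^{b}\frac{b+a-j}{b+a-j+1}\;=\;\frac{a}{a+b}.$$
Combined with Lemma~\ref{p:upperright}(i), this gives $f^{\lambda^1}/f^\lambda=(ab+1)\cdot\frac{a}{a+b}$. Substituting back, I obtain
$$E_\lambda \;=\; 2\Bigl(a - \frac{a^2}{a+b}\Bigr) \;=\; \frac{2ab}{a+b},$$
which is the harmonic mean of $a$ and $b$.

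For the edge count, I invoke Lemma~\ref{l:numedges}: $|E(BN(\lambda))|=\tfrac12(n+1)f^\lambda E_\lambda=\tfrac{ab(ab+1)}{a+b}\cdot f^\lambda$. It remains to plug in the hook-length formula for the rectangle. Collecting hook lengths column by column, the product $\prod_{(i,j)\in\lambda}h(i,j)$ telescopes to $\prod_{j=0}^{b-1}(a+j)!/j!$, yielding $f^\lambda=(ab)!\prod_{j=0}^{b-1}\frac{j!}{(a+j)!}$. Multiplying through gives exactly $\frac{ab}{a+b}(ab+1)!\prod_{j=0}^{b-1}\frac{j!}{(a+j)!}$. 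There is no genuine obstacle in this corollary; the only point requiring a careful sentence is the vanishing of the ${}^i\lambda$ and $\lambda^i$ terms for all but $i=1$, which turns the general master formula into a single clean identity.
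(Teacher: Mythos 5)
Your proof is correct and follows essentially the same route as the paper: specialize \eqref{eq:masterformula} to the rectangle, evaluate $f^{\lambda^1}/f^\lambda$ by the telescoping hook-length computation (your detour through \eqref{eq:upperright} is the same calculation as the paper's direct application of the hook-length formula to $\lambda^1$ and $\lambda$), and finish with Lemma~\ref{l:numedges} and the hook-length formula for $f^\lambda$. One small inaccuracy: ${}^a\lambda$, obtained by adding a box on the left of the \emph{bottom} row, is in fact a valid skew shape, so $f^{{}^a\lambda}\neq 0$; that term nevertheless drops out of \eqref{eq:masterformula} because its coefficient $k-i=a-a$ vanishes, so your collapsed formula and the remainder of the argument are unaffected.
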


\begin{proof}

The expression~\eqref{eq:masterformula} simplifies to
$$E_\lambda = 2\left(a - \frac{a}{ab\!+\!1}\frac{f^{\lambda^1}}{f^\lambda}\right).$$
By the hook-length formula applied to $\lambda^1$ and $\lambda$, and noting the telescoping cancellation, we have 
$$\frac{f^{\lambda^1}}{f^\lambda} = \frac{a(ab+1)}{a+b},$$
and we derive
$$E_\lambda = \frac{2ab}{a+b}$$
which proves the first statement.
To prove the second statement, we use the first statement and the formula in Lemma~\ref{l:numedges}.  
By the hook-length formula, we have
\begin{eqnarray*}
f^\lam &=& (ab)! \frac{(b-1)!(b-2)!\cdots 0!}{(a+b-1)!(a+b-2)!\cdots a!}\\
&=& (ab)!\cdot \prod_{j=0}^{b-1} \frac{j!}{(a+j)!}.
\end{eqnarray*}
Therefore, the number of edges in $BN(\lam)$ is 
$$\frac{ab}{a+b}\cdot (ab+1)! \prod_{j=0}^{b-1} \frac{j!}{(a+j)!}.$$
\end{proof}

\begin{remark}
It is remarkable that the expected number of turns in Corollary~\ref{t:harmonic} is exactly the same 
as the expected number of turns of a {\em uniformly chosen} staircase path  in an $a\times b$ box, as the following easy calculation shows. 
 Regard a staircase path $s$ as a sequence of $a$ entries $U$ and $b$ entries $R$, corresponding to the $a$ steps up and $b$ steps to the right in $s$.  
 Then each of the $a+b-1$ pairs of consecutive entries in this sequence determine a turn if they are $(U,R)$ or $(R,U)$, and the probability of each of these events is $\tfrac{a}{a+b}\cdot \tfrac{b}{a+b-1}$. 
  Summing, the expected number of turns in $s$, chosen uniformly from the set of staircase paths, is $2ab/(a+b)$.  
  Of course, the two probability distributions themselves are completely different.  
\end{remark}

We now present a more general case in which~\eqref{eq:masterformula} still has a simple expression. 
 This corollary computes the genera of Brill-Noether curves in a wide variety of new cases.
For example, it pertains to the case of arbitrary $g,r,$ and $d$ and one point of simple ramification.
\begin{cor} \label{c:tworectangles}
Let $\sigma$ be any skew shape 
obtained from some two-row shape by repeating the first row $k_1$ times and the second row $k_2$ times, for any $k_1$ and $k_2$.  Let $k=k_1 + k_2$.  Then 
$$E_\sigma=2k_1(1-\frac{1}{n\!+\!1}\frac{f^{\sigma^1}}{f^\sigma})+2k_2(1-\frac{1}{n\!+\!1}\frac{f^{{}^k\sigma}}{f^\sigma}),$$
and the number of edges in the graph $BN(\sigma)$ is
$$k(n+1)f^\sigma - f^{\sigma^1} + f^{{}^k\sigma}.$$ 
\end{cor}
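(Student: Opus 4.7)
The plan is to apply Theorem~\ref{t:masterformula} to $\sigma$ and use Corollary~\ref{c:strangeidentity} to collapse the result into the stated form. First I would identify which terms in~\eqref{eq:masterformula} survive. Since the first $k_1$ rows of $\sigma$ all have the same length and likewise the last $k_2$ rows, the row lengths of $\sigma$ strictly decrease only at the single transition from row $k_1$ to row $k_1+1$. Consequently $\sigma^i$ is a skew shape only for $i\in\{1,\,k_1+1\}$, and symmetrically ${}^i\sigma$ is a skew shape only for $i\in\{k_1,\,k\}$. So the two sums in~\eqref{eq:masterformula} reduce to four nonzero terms: $f^{\sigma^1}$ appears with coefficient $-k$, $f^{\sigma^{k_1+1}}$ with coefficient $-k_2$, $f^{{}^{k_1}\sigma}$ with coefficient $k_2$, and $f^{{}^k\sigma}$ with coefficient $k-k=0$.

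To match the target form I would eliminate the intermediate-index quantities $f^{\sigma^{k_1+1}}$ and $f^{{}^{k_1}\sigma}$ using Corollary~\ref{c:strangeidentity}. For this $\sigma$, every other term in that identity vanishes, and it collapses to the single relation
\[
f^{\sigma^1}+f^{\sigma^{k_1+1}}=f^{{}^{k_1}\sigma}+f^{{}^k\sigma}.
\]
Rewriting this as $f^{{}^{k_1}\sigma}=f^{\sigma^1}+f^{\sigma^{k_1+1}}-f^{{}^k\sigma}$ and substituting into the four-term partial expression for $E_\sigma$, the $f^{\sigma^{k_1+1}}$ contributions cancel, leaving only $f^{\sigma^1}$ and $f^{{}^k\sigma}$ terms. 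The $f^{\sigma^1}$ terms combine with net coefficient $-(k-k_2)=-k_1$, while $f^{{}^k\sigma}$ enters with coefficient $-k_2$, which rearranges into the asserted expression
\[
E_\sigma=2k_1\bigl(1-\tfrac{1}{n+1}\tfrac{f^{\sigma^1}}{f^\sigma}\bigr)+2k_2\bigl(1-\tfrac{1}{n+1}\tfrac{f^{{}^k\sigma}}{f^\sigma}\bigr).
\]

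The formula for the number of edges in $BN(\sigma)$ then follows immediately by substituting this $E_\sigma$ into Lemma~\ref{l:numedges}. I do not expect any real obstacle beyond the careful bookkeeping of which $\sigma^i$ and ${}^i\sigma$ are valid skew shapes for the two-rectangle $\sigma$ and tracking the signs through the substitution; the conceptual heart of the argument is that the repetition structure of $\sigma$ forces all but a handful of rows to contribute trivially to the master formula, and that the unique surviving instance of Corollary~\ref{c:strangeidentity} is precisely what is needed to trade the two intermediate-row quantities for the boundary quantities $f^{\sigma^1}$ and $f^{{}^k\sigma}$.
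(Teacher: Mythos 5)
Your route is exactly the paper's: the corollary is proved there by citing Theorem~\ref{t:masterformula} together with the identity of Corollary~\ref{c:strangeidentity}, and your bookkeeping is correct --- only $\sigma^1$ and $\sigma^{k_1+1}$ (resp.\ ${}^{k_1}\sigma$ and ${}^{k}\sigma$) can be skew shapes, the surviving coefficients are $-k,-k_2,k_2,0$ as you say, and eliminating $f^{\sigma^{k_1+1}}$ and $f^{{}^{k_1}\sigma}$ via $f^{\sigma^1}+f^{\sigma^{k_1+1}}=f^{{}^{k_1}\sigma}+f^{{}^k\sigma}$ gives precisely the displayed formula for $E_\sigma$.

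The one place the argument does not close is your final sentence. Substituting your $E_\sigma$ into Lemma~\ref{l:numedges} gives $\tfrac{1}{2}(n+1)f^\sigma E_\sigma = k(n+1)f^\sigma - k_1 f^{\sigma^1} - k_2 f^{{}^k\sigma}$, which is \emph{not} the displayed edge count $k(n+1)f^\sigma - f^{\sigma^1} + f^{{}^k\sigma}$; so ``follows immediately'' glosses over a genuine discrepancy. In fact the displayed edge formula is inconsistent with the corollary's own formula for $E_\sigma$ and is wrong as printed: for the $2\times 2$ square ($k_1=k_2=1$, $n=4$, $f^\sigma=2$, $f^{\sigma^1}=f^{{}^2\sigma}=5$) the graph in Figure~\ref{fig:bngraph} has $10$ edges, in agreement with $k(n+1)f^\sigma - k_1 f^{\sigma^1}-k_2 f^{{}^k\sigma}=20-5-5$ (and with Corollary~\ref{t:harmonic}), whereas the printed expression gives $20$. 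So your derivation is sound and actually produces the corrected form of the second display, but you should state that explicitly rather than asserting that the printed formula drops out of the substitution; as written, that last step is a claim that cannot be verified because it is false for the statement as printed.
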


\begin{proof}
This follows directly from Theorem~\ref{t:masterformula} and the identity~\eqref{eq:strangeidentity}.  Alternatively, it can be calculated using the method in the proof of Theorem~\ref{t:masterformula} twice, working down from the first row to calculate the number of turns in the first $k_1$ rows, and working up from the last row to calculate the number of turns in the last $k_2$ rows.
\end{proof}

%PONTTABLEAUX AND VALID SEQUENCES
%PONTTABLEAUX AND VALID SEQUENCES
%PONTTABLEAUX AND VALID SEQUENCES
\section{Pontableaux and valid sequences}\label{sec:ponttab}
	
In this section we introduce two combinatorial notions: \textit{pontableaux} and \textit{valid sequences}. Each notion depends on a choice of data $g,r,d,\alpha,\beta$, and is used to enumerate the components of $\Grdab(X,p,q)$ when $X$ is a chain of elliptic curves. Both notions encode exactly the same information in the case that the {\em adjusted Brill-Noether number} $\rho(g,r,d,\alpha,\beta)$, defined in~\eqref{eq:adjustedrho}, is equal to $1$.  

We remark that our definition of pontableaux will be made only in the case $\rho(g,r,d,\alpha,\beta)=1$. This is merely to keep the notation as simple as possible and to focus on the intended application. Our definition of valid sequences works for all values of $\rho$.

     We start by defining pontableaux (by way of an intermediate definition of {\em pretableaux}).
  We also  define an adjacency relation on pontableaux.  
  We have chosen the word ``pontableau'' because these objects include tableaux as well as new objects which form bridges (in French, \textit{ponts}) 
  between them in an augmented version of the Brill-Noether graph (see Figure \ref{fig:subbngraph}). 
  
Recall that given two partitions $\lam$ and $\mu$ such that $\mu_i\,{\leq}\, \lambda_i$ for all $i$,
  the shape resulting from removing $\mu$ from $\lambda$ is called a \emph{skew Young diagram} or \emph{skew shape} (see \ref{d:skewYT}).  We note that beginning in this section, we will refer to the topmost row of a skew shape as row $0$ instead of row $1$, and so on, in order to agree with the notational conventions on vanishing orders in subsequent sections.

 \begin{defn} \label{def:skewtab}
 Given $g, r, d$ non-negative integers with $g-d+r \geq 0$, $\alpha =(\alpha_0, \dots,\alpha_r)$ a non-decreasing  and  $\beta= (\beta_0,\dots,\beta_r)$ a non-increasing $(r+1)$-tuple of integers,
 construct a skew shape $\sigma(g, r, d, \alpha, \beta)$  as follows: 
 given the rectangular Young diagram associated to the partition $(g-d+r, g-d+r,\dots,g-d+r)$ of $(r+1)(g-d+r)$, 
 lengthen the $i^\text{th}$ row by attaching $\alpha_i$ boxes to the left and $\beta_{i}$ boxes to the right, for each $i=0,\ldots,r$ (see Figure~\ref{fig:grdab}).
\end{defn}

We will use the following convention to refer to the boxes in the shape $\sigma$: the upper-leftmost box of the $(g-d+r)\times (r+1)$ rectangle will be $(0,0) $
and the $x$ and $y$ coordinates will increase to the right and down, respectively. In particular, the box $(0,0)$ of $\sigma$ may not be the upper-leftmost box of $\sigma$, if $\alpha_0>0$. 

For any skew shape $\sigma$, recall the definitions of $\sigma^a, {}^a\sigma, f^{\sigma^a},$ and $f^{{}^a\sigma}$ from Definition~\ref{d:sigmai}.  

\begin{defn}\label{d:pretableau}
Fix a skew shape $\sigma$ with $n$ boxes.  A {\em pretableau} is a filling of the boxes in $\sigma$, $\sigma^a$, or ${}^a\sigma$ (for some integer $a$) with the symbols 
$1,\ldots,n+1$ (each used exactly once)
and $-\!j$, for one value $j\in \{1,\ldots,n+1\}$, such that:
\begin{enumerate}
\item Counting each symbol $1,\ldots,n\!+\!1$ with weight $+1$ and the symbol $-j$ with weight $-1$, every box in $\sigma$ has total weight $1$, 
while every box outside $\sigma$ has total weight 0.
\item Choosing one positive symbol  from every box produces a filling in which all rows and columns are strictly increasing. 
\item Let $b$ be the box containing the negative symbol $-j$.  Then:
\begin{itemize}
\item If $b$ lies in or to the right of $\sigma$, then there is some symbol $i$ in $b$ with $i\le j$.
\item If $b$ lies in or to the left of $\sigma$, then there is some symbol $i$ in $b$ with $i\ge j$.
\end{itemize}
\end{enumerate} 
\end{defn}

\begin{defn}\label{d:preadjacency}
We say that two pretableaux are {\em adjacent} if they are identical except for replacing a single symbol $-j$ with $-(j\!+\!1)$ or $-(j\!-\!1)$.
\end{defn}

\begin{defn}\label{d:pontableau}
Fix a skew shape $\sigma$.  A {\em pontableau} is an equivalence class of pretableaux of shape $\sigma$ under the relation of moving a pair of opposite labels $j,-j$
 that occupy the same box to any other (allowable) single box.  
\end{defn}

\noindent In this definition, we allow the shape of the pretableau to change under the operation of moving the pair $j,-j$,
 as long as the new shape is still of the form $\sigma$, $\sigma ^a$ or ${}^a\sigma$ for some $a$.

Note that if $j$ and $-j$ occupy the same box of a pretableau $T$, then deleting $j,-j$ entirely produces an almost-standard Young tableau of shape $\sigma$. 
 Thus we may regard the set of pontableaux as the union of the set of almost-standard Young tableaux on $\sigma$
  together with the set of pretableaux in which $-j$ and $j$ appear in distinct boxes.

\begin{ex}\label{d:preequivalence}  The four pretableaux on shape $\sigma = (2,2)$ below form an equivalence class.
\medskip

{
\centering
\begin{tabular}{|c|c|c|}
\cline{1-3}
$\st 1$ & $\st 2$ & $\st -\!3,3$ \\
\cline{1-3}
$\st 4$ & $\st 5$ & \multicolumn{1}{c}{} \\
\cline{1-2}
\end{tabular}
\qquad 
\begin{tabular}{|c|c|c|}
\cline{2-3}
\multicolumn{1}{c|}{} & $\st 1$ & $\st 2$ \\
\cline{1-3}
$\st -\!3,3$ & $\st 4$ & $\st 5$  \\
\cline{1-3}
\end{tabular}
\qquad 
\begin{tabular}{|c|c|}
\hline
$\st 1$ & $\st 2,-\!3,3$ \\
\hline
$\st 4$ & $\st 5$\\
\hline
\end{tabular}
\qquad
\begin{tabular}{|c|c|}
\hline
$\st 1$ & $\st 2$ \\
\hline
$\st -\!3,3,4$ & $\st 5$\\
\hline
\end{tabular}

}
\medskip
\noindent This equivalence class is indexed by the almost-standard tableau
\medskip

{\centering
\begin{tabular}{|c|c|}
\cline{1-2}
$\st 1$ & $\st 2$ \\
\cline{1-2}
$\st 4$ & $\st 5$ \\
\cline{1-2}
\end{tabular}

}
\medskip 

As another example, the 20 pontableaux on a $2\times 2$ square are shown in Figure~\ref{fig:subbngraph}.
\end{ex}

\begin{defn}\label{d:ponadjacency}
We define the {\em augmented Brill-Noether graph} $BN'(\sigma)$ as the graph whose vertices are pontableaux of shape $\sigma$, and where two pontableaux are adjacent in $BN'(\sigma)$ if they have pretableau representatives that are adjacent (see Figures~\ref{fig:subbngraph} and~\ref{fig:adjacency}).
\end{defn}

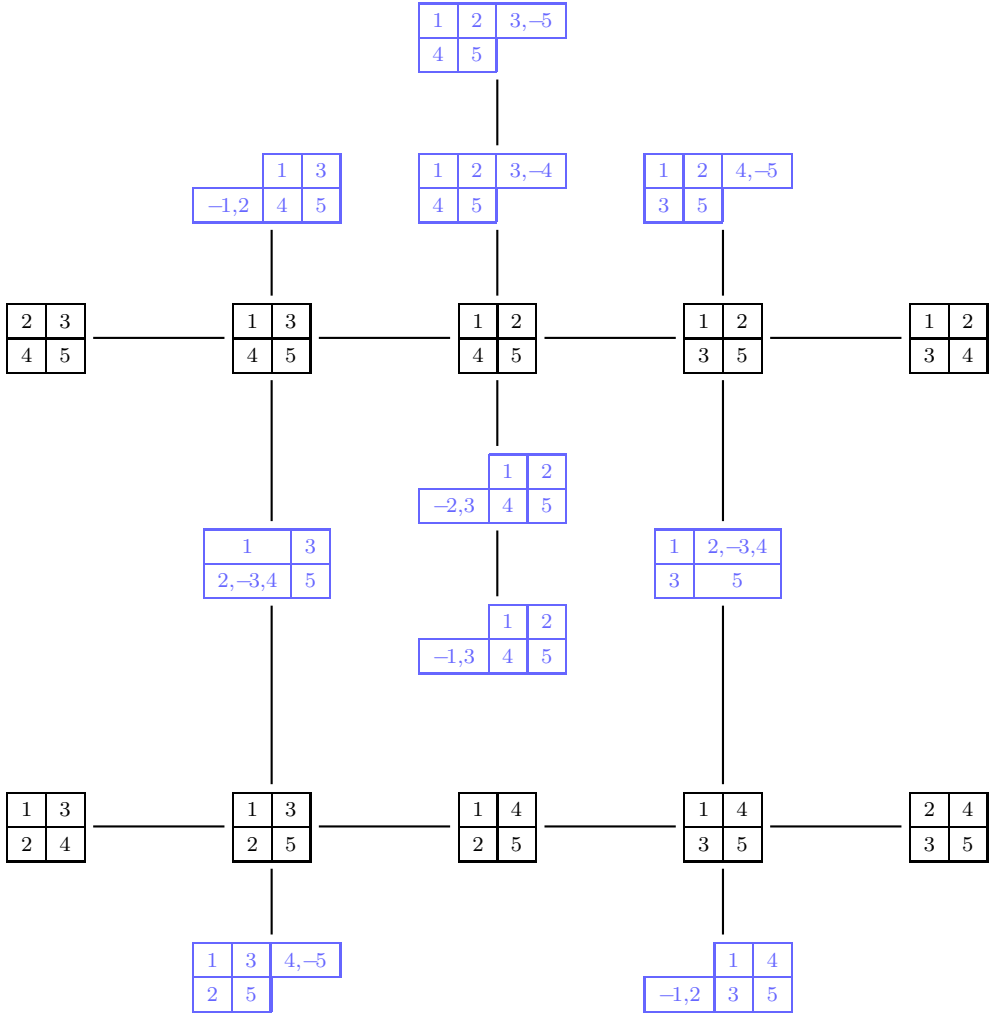
\begin{figure}

\begin{tikzpicture}[inner sep=0in,outer sep=0in]

\node (R1) at (-4,6){\begin{varwidth}{5cm}{
\color{blue!60!white}{
\begin{tabular}{|c|c|c|}
\cline{1-3}
$\st 1$ & $\st 2$ & $\st 3, -\!5$\\
\cline{1-3}
$\st 4$ & $\st 5$ & \multicolumn{1}{|c}{}\\
\cline{1-2}
\end{tabular}
}}\end{varwidth}};

\node (R2) at (-7,4){\begin{varwidth}{5cm}{
\color{blue!60!white}{
\begin{tabular}{|c|c|c|}
\cline{2-3}
\multicolumn{1}{c|}{} & $\st 1$ & $\st 3$\\
\cline{1-3}
$\st -\!1, 2$ & $\st 4$ & $\st 5$\\
\cline{1-3}
\end{tabular}
}}\end{varwidth}};

\node (R3) at (-4,4){\begin{varwidth}{5cm}{
\color{blue!60!white}{
\begin{tabular}{|c|c|c|}
\cline{1-3}
$\st 1$ & $\st 2$ & $\st 3, -\!4$\\
\cline{1-3}
$\st 4$ & $\st 5$ & \multicolumn{1}{|c}{}\\
\cline{1-2}
\end{tabular}
}}\end{varwidth}};

\node (R4) at (-1,4){\begin{varwidth}{5cm}{
\color{blue!60!white}{
\begin{tabular}{|c|c|c|}
\cline{1-3}
$\st 1$ & $\st 2$ & $\st 4, -\!5$\\
\cline{1-3}
$\st 3$ & $\st 5$ & \multicolumn{1}{|c}{}\\
\cline{1-2}
\end{tabular}
}}\end{varwidth}};

\node (E1) at (-10,2){\begin{varwidth}{5cm}{
\begin{tabular}{|c|c|}
\cline{1-2}
$\st 2$ & $\st 3$\\
\cline{1-2}
$\st 4$ & $\st 5$\\
\cline{1-2}
\end{tabular}
}\end{varwidth}};

\node (E2) at (-7,2){\begin{varwidth}{5cm}{
\begin{tabular}{|c|c|}
\cline{1-2}
$\st 1$ & $\st 3$\\
\cline{1-2}
$\st 4$ & $\st 5$\\
\cline{1-2}
\end{tabular}
}\end{varwidth}};

\node (E3) at (-4,2){\begin{varwidth}{5cm}{
\begin{tabular}{|c|c|}
\cline{1-2}
$\st 1$ & $\st 2$\\
\cline{1-2}
$\st 4$ & $\st 5$\\
\cline{1-2}
\end{tabular}
}\end{varwidth}};

\node (E4) at (-1,2){\begin{varwidth}{5cm}{
\begin{tabular}{|c|c|}
\cline{1-2}
$\st 1$ & $\st 2$\\
\cline{1-2}
$\st 3$ & $\st 5$\\
\cline{1-2}
\end{tabular}
}\end{varwidth}};

\node (E5) at (2,2){\begin{varwidth}{5cm}{
\begin{tabular}{|c|c|}
\cline{1-2}
$\st 1$ & $\st 2$\\
\cline{1-2}
$\st 3$ & $\st 4$\\
\cline{1-2}
\end{tabular}
}\end{varwidth}};

\node (R5) at (-4,0){\begin{varwidth}{5cm}{
\color{blue!60!white}{
\begin{tabular}{|c|c|c|}
\cline{2-3}
\multicolumn{1}{c|}{} & $\st 1$ & $\st 2$\\
\cline{1-3}
$\st -\!2, 3$ & $\st 4$ & $\st 5$\\
\cline{1-3}
\end{tabular}
}}\end{varwidth}};

\node (R6) at (-7,-1.0){\begin{varwidth}{5cm}{
\color{blue!60!white}{
\begin{tabular}{|c|c|}
\cline{1-2}
$\st 1$ & $\st 3$\\
\cline{1-2}
$\st 2,-\!3, 4$ & $\st 5$\\
\cline{1-2}
\end{tabular}
}}\end{varwidth}};

\node (R7) at (-4,-2){\begin{varwidth}{5cm}{
\color{blue!60!white}{
\begin{tabular}{|c|c|c|}
\cline{2-3}
\multicolumn{1}{c|}{} & $\st 1$ & $\st 2$\\
\cline{1-3}
$\st -\!1, 3$ & $\st 4$ & $\st 5$\\
\cline{1-3}
\end{tabular}
}}\end{varwidth}};

\node (R8) at (-1,-1.0){\begin{varwidth}{5cm}{
\color{blue!60!white}{
\begin{tabular}{|c|c|}
\cline{1-2}
$\st 1$ & $\st 2, -\!3, 4$\\
\cline{1-2}
$\st 3$ & $\st 5$\\
\cline{1-2}
\end{tabular}
}}\end{varwidth}};

\node (E6) at (-10,-4.5){\begin{varwidth}{5cm}{
\begin{tabular}{|c|c|}
\cline{1-2}
$\st 1$ & $\st 3$\\
\cline{1-2}
$\st 2$ & $\st 4$\\
\cline{1-2}
\end{tabular}
}\end{varwidth}};

\node (E7) at (-7,-4.5){\begin{varwidth}{5cm}{
\begin{tabular}{|c|c|}
\cline{1-2}
$\st 1$ & $\st 3$\\
\cline{1-2}
$\st 2$ & $\st 5$\\
\cline{1-2}
\end{tabular}
}\end{varwidth}};

\node (E8) at (-4,-4.5){\begin{varwidth}{5cm}{
\begin{tabular}{|c|c|}
\cline{1-2}
$\st 1$ & $\st 4$\\
\cline{1-2}
$\st 2$ & $\st 5$\\
\cline{1-2}
\end{tabular}
}\end{varwidth}};

\node (E9) at (-1,-4.5){\begin{varwidth}{5cm}{
\begin{tabular}{|c|c|}
\cline{1-2}
$\st 1$ & $\st 4$\\
\cline{1-2}
$\st 3$ & $\st 5$\\
\cline{1-2}
\end{tabular}
}\end{varwidth}};

\node (E10) at (2,-4.5){\begin{varwidth}{5cm}{
\begin{tabular}{|c|c|}
\cline{1-2}
$\st 2$ & $\st 4$\\
\cline{1-2}
$\st 3$ & $\st 5$\\
\cline{1-2}
\end{tabular}
}\end{varwidth}};

\node (R9) at (-7,-6.5){\begin{varwidth}{5cm}{
\color{blue!60!white}{
\begin{tabular}{|c|c|c|}
\cline{1-3}
$\st 1$ & $\st 3$ & $\st 4, -\!5$\\
\cline{1-3}
$\st 2$ & $\st 5$ & \multicolumn{1}{|c}{}\\
\cline{1-2}
\end{tabular}
}}\end{varwidth}};

\node (R10) at (-1,-6.5){\begin{varwidth}{5cm}{
\color{blue!60!white}{
\begin{tabular}{|c|c|c|}
\cline{2-3}
\multicolumn{1}{c|}{} & $\st 1$ & $\st 4$\\
\cline{1-3}
$\st -\!1, 2$ & $\st 3$ & $\st 5$\\
\cline{1-3}
\end{tabular}
}}\end{varwidth}};

\path[thick, black] ([yshift=-3pt]R1.south) edge ([yshift=3pt]R3.north);
\path[thick, black] ([yshift=-3pt]R3.south) edge ([yshift=3pt]E3.north);
\path[thick, black] ([yshift=-3pt]R2.south) edge ([yshift=3pt]E2.north);
\path[thick, black] ([yshift=-3pt]R4.south) edge ([yshift=3pt]E4.north);
\path[thick, black] ([yshift=-3pt]E2.south) edge ([yshift=3pt]R6.north);
\path[thick, black] ([yshift=-3pt]E3.south) edge ([yshift=3pt]R5.north);
\path[thick, black] ([yshift=-3pt]R5.south) edge ([yshift=3pt]R7.north);
\path[thick, black] ([yshift=-3pt]E4.south) edge ([yshift=3pt]R8.north);
\path[thick, black] ([yshift=-3pt]R6.south) edge ([yshift=3pt]E7.north);
\path[thick, black] ([yshift=-3pt]R8.south) edge ([yshift=3pt]E9.north);
\path[thick, black] ([yshift=-3pt]E7.south) edge ([yshift=3pt]R9.north);
\path[thick, black] ([yshift=-3pt]E9.south) edge ([yshift=3pt]R10.north);

\path[thick, black] ([xshift=3pt]E1.east) edge ([xshift=-3pt]E2.west);
\path[thick, black] ([xshift=3pt]E2.east) edge ([xshift=-3pt]E3.west);
\path[thick, black] ([xshift=3pt]E3.east) edge ([xshift=-3pt]E4.west);
\path[thick, black] ([xshift=3pt]E4.east) edge ([xshift=-3pt]E5.west);

\path[thick, black] ([xshift=3pt]E6.east) edge ([xshift=-3pt]E7.west);
\path[thick, black] ([xshift=3pt]E7.east) edge ([xshift=-3pt]E8.west);
\path[thick, black] ([xshift=3pt]E8.east) edge ([xshift=-3pt]E9.west);
\path[thick, black] ([xshift=3pt]E9.east) edge ([xshift=-3pt]E10.west);
\end{tikzpicture}
\caption{The augmented Brill-Noether graph $BN'((2,2))$}
\label{fig:subbngraph}
\end{figure}

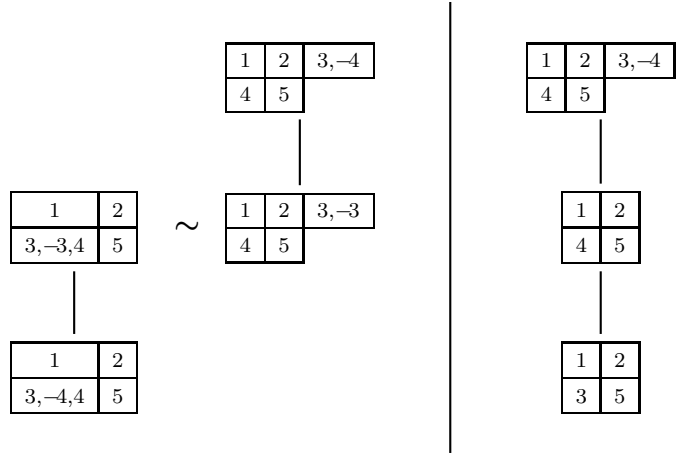
\begin{figure}
\begin{tikzpicture}[inner sep=0in,outer sep=0in]
\node (a) at (-3,3){\begin{varwidth}{5cm}{
\begin{tabular}{|c|c|c|}
\cline{1-3}
$\st 1$ & $\st 2$ & $\st 3,-\!4$ \\
\cline{1-3}
$\st 4$ & $\st 5$ & \multicolumn{1}{c}{} \\
\cline{1-2}
\end{tabular}}
\end{varwidth}};

\node (b) at (1,3){\begin{varwidth}{5cm}{
\begin{tabular}{|c|c|c|}
\cline{1-3}
$\st 1$ & $\st 2$ & $\st 3,-\!4$ \\
\cline{1-3}
$\st 4$ & $\st 5$ & \multicolumn{1}{c}{} \\
\cline{1-2}
\end{tabular}}
\end{varwidth}};

\node (c) at (-6,1){\begin{varwidth}{5cm}{
\begin{tabular}{|c|c|}
\hline
$\st 1$ & $\st 2$ \\
\hline
$\st 3,-\!3,4$ & $\st 5$\\
\hline
\end{tabular}}
\end{varwidth}};

\node (d) at (-3,1){\begin{varwidth}{5cm}{
\begin{tabular}{|c|c|c|}
\cline{1-3}
$\st 1$ & $\st 2$ & $\st  3,-\!3$ \\
\cline{1-3}
$\st 4$ & $\st 5$ & \multicolumn{1}{c}{} \\
\cline{1-2}
\end{tabular}}
\end{varwidth}};

\node (e) at (1,1){\begin{varwidth}{5cm}{
\begin{tabular}{|c|c|}
\hline
$\st 1$ & $\st 2$ \\
\hline
$\st 4$ & $\st 5$\\
\hline
\end{tabular}}
\end{varwidth}};

\node (f) at (-6,-1){\begin{varwidth}{5cm}{
\begin{tabular}{|c|c|}
\hline
$\st 1$ & $\st 2$ \\
\hline
$\st 3,-\!4,4$ & $\st 5$\\
\hline
\end{tabular}}
\end{varwidth}};

\node (g) at (1,-1){\begin{varwidth}{5cm}{
\begin{tabular}{|c|c|}
\hline
$\st 1$ & $\st 2$ \\
\hline
$\st 3$ & $\st 5$\\
\hline
\end{tabular}}
\end{varwidth}};

\node (twiddle) at (-4.5, 1) {$\bs \sim$};

\path[thick, black] ([yshift=-3pt]a.south) edge ([yshift=3pt]d.north);
\path[thick, black] ([yshift=-3pt]b.south) edge ([yshift=3pt]e.north);
\path[thick, black] ([yshift=-3pt]c.south) edge ([yshift=3pt]f.north);
\path[thick, black] ([yshift=-3pt]e.south) edge ([yshift=3pt]g.north);
\path[thick, black] (-1,-2) edge (-1,4);

\end{tikzpicture}
\caption{On the left, four pretableaux in two adjacent pairs.  The two in the middle row are equivalent.  This induces the length 2 path on the three pontableaux shown on the right.  }
\label{fig:adjacency}
\end{figure}

\begin{prop}\label{p:structure_of_bn'}
Let $\sigma$ be any skew shape with $n$ boxes.  The augmented Brill-Noether graph $BN'(\sigma)$ is obtained from the Brill-Noether graph $BN(\sigma)$ in Definition \ref{d:bngraph} by:
\begin{enumerate}[(i)]
\item replacing each edge of $BN(\sigma)$,  between two aSYTs say $T$ and $T'$, by a path of $m$ edges, where $m$ is the difference between the two nonconcordant entries of $T$ and $T'$, and
\item attaching a path of length $\ell$ to each vertex $T$ for each way to add the number $i$ missing from $T$ to create a standard filling of $\sigma^a$ or ${}^a\sigma$ for some $a$.  The length $\ell$ of the path is determined as follows:
\begin{itemize}
\item if the new shape is of the form $\sigma^a$, then $\ell=n+1-i$.
\item if the new shape is of the form ${}^a\sigma$, then $\ell=i-1.$
\end{itemize}
\end{enumerate}
\end{prop}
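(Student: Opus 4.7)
The plan is to classify all pontableaux explicitly and then match up their adjacencies with the paths described in (i) and (ii). Every pontableau falls into one of two cases according to whether some representative has the pair $\{j,-j\}$ in a common box. If yes, the equivalence class consists precisely of all ways of placing $\{j,-j\}$ in an admissible box, and deleting the pair gives a bijection from this case to the set $aSYT(\sigma)$ of vertices of $BN(\sigma)$. If not, the moving-pair equivalence is trivial and the pontableau is a single \emph{separated} pretableau. We will analyze separated pretableaux case-by-case according to whether the ambient shape is $\sigma$, $\sigma^a$, or ${}^a\sigma$, and verify that the pretableau-adjacency relation $-j \leftrightarrow -(j\pm 1)$ assembles them into exactly the chains of (i) and (ii).

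For a separated pretableau of shape $\sigma$, the weight bookkeeping in Definition~\ref{d:pretableau}(1) forces exactly one box $b \in \sigma$ to contain three symbols $\{a,c,-j\}$ with $a<c$, while all other boxes contain one positive label each; and since $b$ is both ``in or right of $\sigma$'' and ``in or left of $\sigma$,'' Definition~\ref{d:pretableau}(3) forces $a \le j \le c$, with strict inequalities in the separated case. Definition~\ref{d:pretableau}(2), applied with each of the two choices of positive label in $b$, then shows that replacing $\{a,c,-j\}$ by $\{a\}$ (resp.\ $\{c\}$) produces a valid aSYT $T$ missing $c$ (resp.\ $T'$ missing $a$), so $(T,T')$ is an edge of $BN(\sigma)$. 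Conversely, every such edge yields $c-a-1$ separated pretableaux, one for each $a<j<c$, and the adjacency $-j\leftrightarrow -(j\pm 1)$ strings them in a chain whose endpoint at $j=a$ (resp.\ $j=c$) carries a coincident pair $\{a,-a\}$ (resp.\ $\{c,-c\}$) that the moving-pair equivalence identifies with $T'$ (resp.\ $T$). This gives statement (i) with path length $m=c-a$.

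For a separated pretableau of shape $\sigma^a$, the same weight count forces the added box to contain $\{i,-j\}$ and every box of $\sigma$ to contain exactly one positive label; Definition~\ref{d:pretableau}(3) applied to the extra box, which is ``in or right of $\sigma$,'' then gives $i \le j$, with strict inequality in the separated case. The positive symbols alone give a standard filling of $\sigma^a$ in which $i$ occupies the new box, and the restriction to $\sigma$ is an aSYT $T$ missing $i$. Conversely, each extension of such a $T$ to a standard filling of some $\sigma^a$ produces, for every $j \in \{i+1,\ldots,n+1\}$, a separated pretableau; the adjacency $-j\leftrightarrow -(j\pm1)$ strings these into a chain whose endpoint at $j=i$ (carrying the coincident pair $\{i,-i\}$) is identified with $T$ by the moving-pair equivalence, and whose other endpoint $j=n+1$ is terminal (there is no symbol $-(n+2)$). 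This is a pendant path of length $n+1-i$ attached at $T$. The case of shape ${}^a\sigma$ is entirely symmetric; Definition~\ref{d:pretableau}(3) forces $i \ge j$ there, and the same argument yields a pendant of length $i-1$ attached at $T$.

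The last step is to check that no edges of $BN'(\sigma)$ are missed or duplicated. Since pretableau-adjacency is defined to fix the shape and all positive labels while altering only $-j$ by $\pm 1$, each such adjacency lives inside a single chain of the form above; the only additional edges incident to aSYT vertices come from applying the moving-pair equivalence at the endpoints of those chains, which is exactly what we have accounted for. The main obstacle — and the bulk of the careful work — is the case-by-case verification that the chains' endpoints are identified with the correct aSYTs and that the intermediate separated pretableaux are in bijection with the data described in (i) and (ii); both verifications reduce to straightforward applications of Definition~\ref{d:pretableau}(2) and (3).
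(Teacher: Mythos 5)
Your proposal is correct and follows essentially the same route as the paper's own proof: both decompose $BN'(\sigma)$ into chains of pretableaux determined by the box holding the negative label (with the admissible range of $-j$ dictated by Definition~\ref{d:pretableau}(3)), identify the chain endpoints carrying a coincident pair with aSYT vertices via the moving-pair equivalence, and use the fact that pretableau-adjacency fixes the shape and positive labels to see that the chains cover $BN'(\sigma)$ with disjoint interiors. The only difference is presentational: the paper constructs the paths from the data $(T,b,i,k)$ and then checks coverage, while you start from an arbitrary pontableau and classify it, which amounts to the same argument.
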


\begin{proof}
We begin by describing a set of paths in $BN'(\sigma)$. We will show that the interiors of these paths are all disjoint, and that the union of the paths is the entire graph $BN'(\sigma)$. 
Let $T$ be an aSYT of shape $\sigma$, and let $b$ be a box that is either in $\sigma$ or in some ${}^a\sigma$ or $\sigma^a$.
 Let $i$ be a label from $\{1,2,\cdots,n+1\}$ such that box $b$ can be labeled with $i$ (erasing the existing label if $b$ is in $\sigma$) 
 and the result is an aSYT of shape $\sigma$ or a SYT of shape $\sigma^a$ or ${}^a\sigma$.
  Let $k$ be an integer, obeying the following constraint: if $b$ lies in $\sigma$, then $k$ is the existing label of $b$; if $b$ lies in ${}^a\sigma$ but not $\sigma$, 
  then $k$ must be $1$; if $b$ lies in $\sigma^a$ but not $\sigma$, then $k$ must be $n+1$. 
  Let $J$ consist of all integers between $i$ and $k$ inclusive, and for all $j \in J$ let $T_j$ denote the pretableau defined by adding the labels $-j$ and $k$ to box $b$.
   The pretableaux $T_j$ are all nonequivalent, hence they give distinct pontableaux. Also, when arranged in order by $j$, they form a path in $BN'(\sigma)$. 
   The length (number of edges) in this path is $|i-k|$. Each set of data $T,b,i,k$ defines such a path.

By Definition~\ref{d:pretableau}, every pretableau occurs in such a path. 
Furthermore, observe that the interiors of any two distinct paths are pairwise disjoint,
 because a pretableau in which the negative label does not match a positive label in the same box is not equivalent to any other pretableau. 
 Therefore the graph $BN'(\sigma)$ is formed by taking all the endpoints of these paths and adding paths of edges between them.
These paths fall into three classes according to whether $b$ lies in $\sigma$, left of $\sigma$, or right of $\sigma$.
 If $b$ lies in $\sigma$, then the two endpoints $T_i$ and $T_k$ are equivalent to two aSYT which are adjacent in $BN(\sigma)$,
  and which differ by replacing the label $i$ by the label $k$. The length of this path is $|i-k|$.
   In the second case, if $b$ lies to the left of $\sigma$, then the path has the aSYT $T$ on one end, the other end is not an aSYT, and the length of the path is $i-1$. 
   In the third case, if $b$ lies to the right of $\sigma$, then the path has $T$ on one end, the other end is not an aSYT, and the length of the path is $n+1-i$.
    Therefore the structure of the graph $BN'(\sigma)$ is as claimed.
\end{proof}

The data encoded in a pontableau with $r+1$ rows is equivalent to the data of a particular type of lattice path in $\mathbb{Z}^{r+1}$, which we will call a \emph{valid sequence.} We now describe these objects and the correspondence between them and pontableaux.

We will write $e_0,\ldots,e_{r}$ for the standard basis vectors of $\mathbb{Z}^{r+1}$, and for any $a\in\mathbb{Z}$, we will let $\ul a = (a,\ldots,a)\in \mathbb{Z}^{r+1}$.  
Write $|v|$ for the sum of the coordinates of a vector $v$, and write $v \ge w$ for vectors $v$ and $w$ if the inequality holds in each entry.

\begin{defn}\label{d:validseq}
Fix integers $g,r,d,$ a nondecreasing tuple $\alpha\in\mathbb{Z}_{\ge0}^{r+1}$, and a nonincreasing tuple $\beta\in\mathbb{Z}_{\ge0}^{r+1}$.  
A {\em valid sequence} for the data $g,r,d,\alpha,$ and $\beta$ is a sequence $$\boldsymbol \alpha = (\alpha^1,\ldots,\alpha^{g+1})$$ 
of nondecreasing tuples $\alpha^i \in\mathbb{Z}_{\ge0}^{r+1}$, satisfying
\begin{enumerate}[(i)]
\item $\alpha^1 = \alpha$, \label{it:alpha}
\item $\alpha^{g+1} = {\ul d}-{\ul r}-\beta$, and \label{it:beta}
\item for each $i=1,\ldots,g$, there exists some index $a=0,\ldots,r$ such that $$\alpha^{i+1}-\alpha^i \ge \ul 1 - e_a.$$  \label{it:bound}
\end{enumerate}
Denote by $\VS(g,r,d,\alpha,\beta)$ the set of all valid sequences for the data $g,r,d,\alpha,\beta$.
\end{defn}

We can regard $\boldsymbol \alpha$ as a lattice walk in the region of $\mathbb{Z}^{r+1}$ whose points have nondecreasing coordinates.  Then we will call the $g$ differences $\alpha^{i+1}-\alpha^{i}$ the {\em steps} of the walk $\boldsymbol \alpha$. We define the {\em progress} of a step to be 
$$|\alpha^{i+1}-\alpha^i|-r.$$

Note that this is a nonnegative number since $\alpha^{i+1}-\alpha^i$ is bounded below by some $\ul 1-e_a$ and $|\ul 1-e_a|=r.$ 

\begin{lemma}\label{l:total_progress}
Suppose $\bs \alpha \in \VS(g,r,d,\alpha,\beta)$.  Then the total progress of $\bs \alpha$ is $\rho = \rho(g,r,d,\alpha,\beta)$.  
\end{lemma}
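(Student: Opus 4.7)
The plan is to observe that the total progress telescopes, and then directly verify the resulting expression equals $\rho$ by plugging in the boundary conditions on $\boldsymbol\alpha$ prescribed in Definition~\ref{d:validseq}. No clever combinatorial argument is required; the whole proof is essentially one computation. The only mild caveat is to make sure the sign conventions in the definitions of $|\cdot|$, of $\rho$, and of $\alpha^{g+1}$ all line up correctly.

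First, unwinding the definition of progress, the total progress of $\boldsymbol\alpha = (\alpha^1,\ldots,\alpha^{g+1})$ is
\[
\sum_{i=1}^g \bigl(|\alpha^{i+1}-\alpha^i|-r\bigr) = |\alpha^{g+1}|-|\alpha^1|-gr,
\]
since the sum telescopes (here I use that $|\cdot|$ is linear on $\mathbb{Z}^{r+1}$). By conditions \eqref{it:alpha} and \eqref{it:beta} of Definition~\ref{d:validseq}, $|\alpha^1|=|\alpha|$ and
\[
|\alpha^{g+1}| = |\underline d - \underline r - \beta| = (r+1)(d-r) - |\beta|.
\]
Thus the total progress equals $(r+1)(d-r) - |\beta| - |\alpha| - gr$.

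It remains to match this with $\rho(g,r,d,\alpha,\beta) = g - (r+1)(g-d+r) - |\alpha| - |\beta|$. Expanding,
\[
g - (r+1)(g-d+r) = g - (r+1)g + (r+1)(d-r) = -gr + (r+1)(d-r),
\]
so indeed $\rho = (r+1)(d-r) - gr - |\alpha| - |\beta|$, which agrees with the total progress computed above.

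The main (very mild) obstacle is just bookkeeping — ensuring the constant $gr$ arising from subtracting $r$ once per step assembles correctly with the $(r+1)(g-d+r)$ term of $\rho$. Because every step has progress at least $0$ (the reason being, as noted in the excerpt, that $|\underline{1}-e_a|=r$), this lemma also shows immediately that no valid sequence exists when $\rho<0$, and when $\rho=0$ every step must have progress exactly zero; these observations will presumably be useful later on.
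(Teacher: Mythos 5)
Your proof is correct and follows essentially the same route as the paper: both arguments telescope the difference $\alpha^{g+1}-\alpha^1$, plug in the boundary conditions $\alpha^1=\alpha$ and $\alpha^{g+1}=\underline d-\underline r-\beta$, and compare coordinate sums with the definition of $\rho$ (the paper just phrases each step as $\underline 1 - e_{a_i} + s^i$ with $s^i\ge 0$ before summing). Your closing remarks about $\rho<0$ and $\rho=0$ are consistent with how the paper later uses the lemma (e.g.\ Corollary~\ref{c:classification}).
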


\begin{proof}
By definition, for each $i=1,\ldots,g$, there exists an index $a_i$, depending on $i$, and a nonnegative vector $s^i$ such that $$\alpha^{i+1}-\alpha^i = \ul 1 - e_{a_i} + s^i.$$   
So,  the progress of the $i^{th}$ step is $|s^i|$.  Then 
$$\alpha^{g+1}-\alpha^1 = \sum_{i=1}^g (\ul 1 - e_{a_i} + s^i),$$
whereas by Definition~\ref{d:validseq}\eqref{it:alpha} and~\eqref{it:beta}, we have
$$\alpha^{g+1}-\alpha^1 = \ul d - \ul r - \beta - \alpha.$$
Therefore $$\sum_{i=1}^g (\ul 1 - e_{a_i} + s^i)=\ul d - \ul r - \beta - \alpha,$$
and we get
$$g(r+1)-g + \sum |s^i| = (r+1)(d-r) -|\alpha|-|\beta|.$$
Solving and using the definition of $\rho$, we conclude $\sum|s^i| = \rho$ as was claimed.
\end{proof}

Then the following corollary is immediate in the case $\rho=1$:
\begin{cor}\label{c:classification}
Suppose $\rho(g,r,d,\alpha,\beta) = 1$ and suppose $\bs \alpha \in \VS(g,r,d,\alpha,\beta)$.  Then 

\begin{enumerate}
\item There is exactly one index $j=1,\ldots,g$ for which $$\alpha^{j+1}-\alpha^{j} = \ul 1 + e_a-e_b,$$ for some indices $a,b \in\{0,\ldots,r\}$.  
If $a=b$, then we call the step a {\em stalling} step.  If $a\ne b$, then we call the step a {\em swapping} step.
\item For every other index $i \ne j$, we have 
$$\alpha^{i+1}-\alpha^{i} = \ul 1 - e_b$$ for some $b \in\{0,\dots,r\}$.  In this case we call the step a {\em plodding} step.
\end{enumerate}
\end{cor}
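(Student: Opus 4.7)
The plan is to derive this corollary directly from Lemma~\ref{l:total_progress} and the definition of the progress of a step. Recall from Definition~\ref{d:validseq}\eqref{it:bound} that for every step we may write
\[
\alpha^{i+1}-\alpha^i = \ul 1 - e_{a_i} + s^i
\]
for some index $a_i \in \{0,\ldots,r\}$ and some vector $s^i \in \mathbb{Z}_{\ge 0}^{r+1}$, and the progress of the $i^{th}$ step equals $|s^i|$. By Lemma~\ref{l:total_progress}, the total progress $\sum_i |s^i|$ equals $\rho$, which in our situation is $1$. Since each $|s^i|$ is a nonnegative integer, exactly one of them, say the $j^{th}$, equals $1$, and all others equal $0$.

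First I would handle the easy case $i \ne j$. Here $|s^i| = 0$ forces $s^i = 0$, so $\alpha^{i+1}-\alpha^i = \ul 1 - e_{a_i}$, which is precisely the plodding form in part (2) of the corollary (take $b = a_i$).

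Next, for the distinguished index $j$, we have $|s^j| = 1$, so $s^j = e_a$ for some $a \in \{0,\ldots,r\}$. Setting $b = a_j$ gives
\[
\alpha^{j+1}-\alpha^j = \ul 1 + e_a - e_b,
\]
which is precisely the form required in part (1). The dichotomy $a=b$ versus $a\ne b$ then corresponds by definition to a stalling or swapping step, so both cases of part (1) are covered.

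There is essentially no obstacle: the only subtlety worth noting is uniqueness of $j$, which is automatic because the $|s^i|$ are nonnegative integers summing to $1$. (The nondecreasing condition on each $\alpha^i$ imposes further constraints on which pairs $(a,b)$ can occur at a swapping step, but this is not asserted by the corollary, so no additional verification is required.) The whole argument fits in a few lines once Lemma~\ref{l:total_progress} is invoked.
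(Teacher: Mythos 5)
Your proposal is correct and follows essentially the same route as the paper, which treats the corollary as an immediate consequence of Lemma~\ref{l:total_progress}: writing each step as $\ul 1 - e_{a_i} + s^i$ with $s^i \ge 0$, the total progress $\sum_i |s^i| = \rho = 1$ forces exactly one $s^i$ to be a standard basis vector and the rest to vanish. Nothing further is needed.
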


The following construction shows that we can conveniently enumerate valid sequences using pontableaux. 
Intuitively, a pontableau $T$ encodes a sequence of skew shapes. 
When $T$ is equivalent to an aSYT, this sequence consists of adding boxes one at a time, at the times encoded by their labels. 
In the case of pontableaux that are not aSYT, there is one step where one box is added while another is removed. 
To such a sequence of additions and removals of blocks, we associate a valid sequence, as follows. We will show that this construction gives a bijection.

\begin{defn}\label{d:pretableau-to-sequence}
Fix $g,r,d, \alpha, \beta$ and let $T$ be a pretableau on $\sigma(g, r, d, \alpha, \beta)$. 
Recall that we label the boxes of $\sigma$ with coordinates $(x,y)$ such that $(0,0)$ is the upper-left corner of the $(g-d+r) \times (r+1)$ rectangle
 (so that all boxes corresponding to $\alpha$ have negative $x$ coordinate). 
Let $b$ be any box in the same row as some box of $\sigma$, not necessarily contained in $\sigma.$
We will say that $T$ \emph{contains box $b$ after $i$ steps} if

\begin{itemize}
\item the box $b$ lies to the left of $\sigma$, and the total weight (see Definition~\ref{d:pretableau}(1)) of its labels of absolute value at most $i$ is $0$, or
\item the box $b$ lies in $\sigma$ or to the right of it, and the total weight of its labels of absolute value at most $i$ is $1$.
\end{itemize}

For each $i=1,\ldots,g+1$, let $\alpha^i$ be the following $(r+1)$-tuple of integers.
\begin{eqnarray*}
\alpha^i &=& (\alpha^i_0, \alpha^i_1, \cdots, \alpha^i_r)\\
\textrm{where } \alpha^i_j &=& i-1 - \min \{ x:\ \textrm{$T$ does not contain the box $(x,j)$ after $i-1$ steps} \}
\end{eqnarray*}
Call $\bs \alpha = (\alpha^1, \alpha^2, \cdots, \alpha^{g+1})$ the \emph{sequence associated to $T$}.
\end{defn}

\begin{ex}
We will describe how a valid sequence $\bs \alpha$ can be constructed from the following pretableau of shape $\sigma = (3,2)\setminus (1)$.

\medskip
{\centering
\begin{tabular}{|c|c|c|}
\cline{2-3}
\multicolumn{1}{c|}{} & $\st 1$ & $\st 3$\\
\cline{1-3}
$\st 2, -\!3, 4$ & $\st 5$ & \multicolumn{1}{|c}{}\\
\cline{1-2}
\end{tabular}
\par}
\medskip

Then the following sequence of six pictures (read the first row from left to right and then the second row from left to right) show which boxes are present after $i$ steps, 
for each $i$ from $0$ to $5$.
The boxes that are present are shaded gray.  (In principle, each shaded region would extend infinitely to the left.)

\medskip
{\centering
\begin{tabular}{c|c|c|c|}
\cline{3-4}
\multicolumn{1}{c}{  \cellcolor{gray!50}  } &  \multicolumn{1}{c|}{\cellcolor{gray!50}} & $\st 1$ & $\st 3$\\
\cline{2-4}
\multicolumn{1}{c|}{\cellcolor{gray!50}} & $\st 2, -\!3, 4$ & $\st 5$ & \multicolumn{1}{|c}{}\\
\cline{2-3}
\end{tabular}
\qquad
\begin{tabular}{c|c|c|c|}
\cline{3-4}
\multicolumn{1}{c}{  \cellcolor{gray!50}  } &  \multicolumn{1}{c|}{\cellcolor{gray!50}} & \cellcolor{gray!50}$\st 1$ & $\st 3$\\
\cline{2-4}
\multicolumn{1}{c|}{\cellcolor{gray!50}} & $\st 2, -\!3, 4$ & $\st 5$ & \multicolumn{1}{|c}{}\\
\cline{2-3}
\end{tabular}
\qquad
\begin{tabular}{c|c|c|c|}
\cline{3-4}
\multicolumn{1}{c}{  \cellcolor{gray!50}  } &  \multicolumn{1}{c|}{\cellcolor{gray!50}} & $\cellcolor{gray!50}\st 1$ & $\st 3$\\
\cline{2-4}
\multicolumn{1}{c|}{\cellcolor{gray!50}} & $\cellcolor{gray!50}\st 2, -\!3, 4$ & $\st 5$ & \multicolumn{1}{|c}{}\\
\cline{2-3}
\end{tabular}
\par}

\medskip

{\centering
\begin{tabular}{c|c|c|c|}
\cline{3-4}
\multicolumn{1}{c}{  \cellcolor{gray!50}  } &  \multicolumn{1}{c|}{\cellcolor{gray!50}} & $\cellcolor{gray!50}\st 1$ & $\cellcolor{gray!50}\st 3$\\
\cline{2-4}
\multicolumn{1}{c|}{\cellcolor{gray!50}} & $\st 2, -\!3, 4$ & $\st 5$ & \multicolumn{1}{|c}{}\\
\cline{2-3}
\end{tabular}
\qquad
\begin{tabular}{c|c|c|c|}
\cline{3-4}
\multicolumn{1}{c}{  \cellcolor{gray!50}  } &  \multicolumn{1}{c|}{\cellcolor{gray!50}} & $\cellcolor{gray!50}\st 1$ & $\cellcolor{gray!50}\st 3$\\
\cline{2-4}
\multicolumn{1}{c|}{\cellcolor{gray!50}} & $\cellcolor{gray!50}\st 2, -\!3, 4$ & $\st 5$ & \multicolumn{1}{|c}{}\\
\cline{2-3}
\end{tabular}
\qquad
\begin{tabular}{c|c|c|c|}
\cline{3-4}
\multicolumn{1}{c}{  \cellcolor{gray!50}  } &  \multicolumn{1}{c|}{\cellcolor{gray!50}} & \cellcolor{gray!50}$\st 1$ &\cellcolor{gray!50} $\st 3$\\
\cline{2-4}
\multicolumn{1}{c|}{\cellcolor{gray!50}} &\cellcolor{gray!50} $\st 2, -\!3, 4$ & \cellcolor{gray!50}$\st 5$ & \multicolumn{1}{|c}{}\\
\cline{2-3}
\end{tabular}
\par}
\medskip

Note that the fourth of these pictures illustrates the origin of our use of the word ``swap:'' the previous shaded box $(-1,1)$ is swapped for the box $(1,0)$ during this step.

From these, we can read the following valid sequence by examining the $x$ coordinate of the unshaded box that is furthest to the left in each row. This gives the following six pairs of integers.
\medskip

{\centering
$\begin{array}{lll}
(0,-1) & (1,-1) & (1,0)\\
(2,-1) & (2,0) & (2,1)
\end{array}$
\par}

\medskip

Now subtracting these pairs (as vectors) from $(0,0), (1,1), \cdots, (5,5)$, we obtain the valid sequence $\bs \alpha$.

\medskip

{\centering
$\begin{array}{lll}
\alpha^1 = (0,1) & \alpha^2 = (0,2) & \alpha^3 = (1,2)\\
\alpha^4 = (1,4) & \alpha^5 = (2,4) & \alpha^6 = (3,4)
\end{array}$
\par}

\medskip
\end{ex}

The next definition allows us to describe when a pair of valid sequences correspond to adjacent pontableaux.

\begin{defn}\label{d:offby1}
Let us say that two valid sequences $\bs \alpha$ and $\bs \alpha'$ are {\em off by 1} if
$$\bs \alpha - \bs \alpha' = (\ul 0, \ldots, \ul 0, \pm e_a, \ul 0, \ldots, \ul 0)$$
for some index $a\in \{0,\ldots,r\}$.  In other words, all but one of the pairs of corresponding entries agree, and the two that disagree do so in a single coordinate with magnitude 1.
\end{defn}

\begin{lemma}\label{l:pont-valid} Let $\sigma$ be the skew shape associated to $g,r,d,\alpha,\beta$, where $\rho(g,r,d,\alpha,\beta) = 1$.
\begin{enumerate}
\item The sequence associated to any pretableau $T$ is a valid sequence.
\item Every valid sequence arises from some pretableau.
\item Two pretableaux have the same associated sequence if and only if they are equivalent.
\item Two pontableaux are adjacent if and only if their corresponding valid sequences are off by $1$.
\end{enumerate}
\end{lemma}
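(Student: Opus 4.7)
The plan is to establish a bijection between pontableaux and valid sequences that carries adjacency of pontableaux to the off-by-$1$ relation on valid sequences. The central bookkeeping device is, for each row $c$ and step $i$, the leftmost $x$-coordinate $L^i_c$ of a box in row $c$ not contained after $i-1$ steps, so that $\alpha^i_c = (i-1) - L^i_c$; the valid-sequence inequalities then translate into control on how the leftmost uncovered box shifts between consecutive steps.

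For part (1), conditions (i) and (ii) of Definition~\ref{d:validseq} are direct calculations: at step $0$ no labels are counted, so $\alpha^1 = \alpha$; at step $g$ all labels are counted and by condition (1) of Definition~\ref{d:pretableau} the contained boxes are exactly those of $\sigma$, giving $\alpha^{g+1} = \ul d - \ul r - \beta$. For condition (iii), going from step $i$ to step $i+1$ introduces at most two labels (namely $i$, and $-j$ if $j = i$), so at most two boxes change containment. Three cases arise: plodding ($i \ne j$), where the unique box $b_i$ containing label $i$ becomes contained and $\alpha^{i+1} - \alpha^i = \ul 1 - e_a$ for $a$ the row of $b_i$; stalling ($i = j$ with $j, -j$ in a common box), where no containment changes and $\alpha^{i+1} - \alpha^i = \ul 1$; and swap ($i = j$ with $j, -j$ in different boxes), where the constraints of Definition~\ref{d:pretableau} force the positive labels $k_1 < k_2$ sharing a box with $-j$ to straddle $j$, so that at step $j$ the box containing $-j$ leaves the shape in row $a$ while the box containing positive $j$ enters in row $b$, yielding $\alpha^{i+1} - \alpha^i = \ul 1 + e_a - e_b$.

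For part (3), the forward direction is immediate: moving $(j, -j)$ as a pair from one box to another preserves every partial weight, since for $i < j$ neither label is counted and for $i \ge j$ both are counted and cancel. For the converse, the sequence $\bs \alpha$ determines the shape at each step, hence the box whose containment status changes at each plodding step; this fixes each positive label $k \ne j$, leaving only the placement of the pair $(j, -j)$, which is precisely the equivalence. Part (2) follows by reversing the construction: from $\bs \alpha$, Corollary~\ref{c:classification} identifies the unique special index $j$, and one assigns positive labels according to the plodding steps, handling the special step as stalling (place $(j,-j)$ together in any legal box) or swap (place $-j$ in the box leaving the shape, accompanied by the two positive labels marking that box's pre- and post-swap entrances, and place positive $j$ in the box entering at step $j$).

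Part (4) follows by direct analysis: replacing $-j$ by $-(j \pm 1)$ alters the partial weight $W_b(i)$ only at the single step between $j$ and $j \pm 1$, and only in the box $b$ containing the negative label; this shifts the containment of $b$ by one step, changing exactly one entry $\alpha^i$ in one coordinate by $\pm 1$ while leaving all other $\alpha^{i'}$ unchanged. Conversely, an off-by-$1$ pair of valid sequences lifts via part~(2) to pontableaux represented by pretableaux differing only in the index of their negative label. The main obstacle will be the case analysis in part (1) for the swap case, which requires using condition (3) of Definition~\ref{d:pretableau} in an essential way to pin down the structure of the positive labels sharing a box with $-j$, together with the $\rho = 1$ hypothesis to rule out configurations that would produce progress greater than $1$ at a single step.
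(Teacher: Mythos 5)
Your plan is correct and follows essentially the same route as the paper's proof: translate the labels into containment-after-$i$-steps, read off $\alpha^{i+1}-\alpha^i$ as $\ul 1$ plus $e_a$ for a removed box and minus $e_b$ for an added box, reconstruct a pretableau from a valid sequence by greedy leftmost placement with the stall-step ambiguity matching exactly the equivalence relation, and identify adjacency with shifting the negative label by one, which changes a single coordinate of a single $\alpha^i$ by $\pm 1$. The one caveat is in your swap analysis: the box containing $-j$ need not lie in $\sigma$ (it may be the extra box of $\sigma^a$ or ${}^a\sigma$), in which case it carries a single positive companion label rather than two labels straddling $j$; the same frontier argument still yields $\ul 1 + e_a - e_b$, so this is a routine additional sub-case rather than a genuine obstacle.
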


\begin{proof}
Let $T$ be a pretableau on $\sigma$ with associated sequence $\bs \alpha$, and let $i$ be any element of $\{1,2,\cdots,g\}$. 
Let $S_i$ and $S_{i+1}$ be the sets of boxes contained in $T$ after $i$ steps and $i+1$ steps, respectively. 
Then $S_{i+1}$ is obtained from $S_i$ by removing any boxes with the label $-(i+1)$ and then adding any boxes with the label $(i+1)$. 
It follows from this that $\alpha^{i+1} - \alpha^i$ can be computed by beginning with $\underline{1}$, adding a standard basis vector $e_a$ if the label $-(i+1)$ occurs in row $a$, 
and then subtracting the standard basis vector $e_b$ if the label $(i+1)$ occurs in row $b$. 
This shows that $\bs \alpha$ meets criterion (iii) in the definition of a valid sequence. Criteria (i) and (ii) of the definition are easy to verify. This establishes part 1 of the lemma.

Now, suppose that $\bs \alpha$ is a valid sequence. 
Then we can construct a pretableau $T$ from it by examining each vector 
$\underline{1} - \alpha^{i+1} + \alpha^i$ in turn and using it to place the label $(i+1)$ and possibly the label $-(i+1)$. 
Assume inductively that we have already placed the labels of absolute value at most $i$ in $T$.    

By Corollary \ref{c:classification}, the vector $\ul 1 - \alpha^{i+1}+\alpha^i$ is equal to either $e_b$ for some $b$, or $e_b-e_a$ for some $a$ and $b$. 
 In the first case, place label $(i+1)$ in the leftmost box of row $b$ that is not yet contained in $T$ (as in Definition~\ref{d:pretableau-to-sequence}). 
  In the second case, we do the following two operations, in either order:
\begin{itemize}
\item Place label $(i+1)$ in the leftmost box of row $b$ that is not yet contained in $T$. 
\item Place label $-(i+1)$ in the first box of row $a$ that is not yet contained in $T$.
\end{itemize}
  Again, the notion of containment is exactly as in Definition~\ref{d:pretableau-to-sequence}.  
  Because each vector $\alpha^i$ is nondecreasing, this placement of labels is guaranteed to produce a pretableau. 
  This establishes part $2$ of the lemma.  Note that the two operations above commute unless $a=b$.

For part $3$ of the lemma, observe that the only case in which the valid sequence does not \emph{uniquely} define the pretableau is when there is a step where $\alpha^{i+1} - \alpha^i = \underline{1}$, so that the indices $a$ and $b$ (as used above) must be equal to each other, but are not uniquely determined (and the order of placing $(i+1)$ and $-(i+1)$ matters as well). In this case, observe that choosing different values of $a=b$, and choosing the order to place $(i+1)$ and $-(i+1)$, corresponds to placing both of the labels $(i+1)$ and $-(i+1)$ together in various boxes of $T$, which give equivalent pretableaux by definition.

For part $4$ of the lemma, observe that two pretableaux are adjacent (hence their corresponding pontableaux are adjacent) 
if and only if one is obtained from the other by changing (in place) a label $-i$ to $-(i+1)$.
For any index $j \neq i$, the set of boxes contained after $j$ steps is the same for both pretableaux; these sets differ only after $i$ steps. 
The only difference is that, after $i$ steps,  one row of one of the pretableaux has one more box  than the other. 
This means precisely that the corresponding valid sequence of one is obtained by subtracting a unit basis vector from one element of the valid sequence of the other; 
hence the two valid sequences are adjacent. The same reasoning in reverse shows that two valid sequences that differ by $1$ arise from adjacent pretableaux, as desired.
\end{proof}

Combining all of the statements of Lemma~\ref{l:pont-valid}, we have:

\begin{cor}\label{cor:last1}
When $\rho(g,r,d,\alpha,\beta) = 1$, the set of pontableaux is in bijection with $\VS(g,r,d,\alpha,\beta)$, via the construction in Definition \ref{d:pretableau-to-sequence}, and two pontableaux are adjacent in $BN'(\sigma)$ if and only if their valid sequences are off by 1.  
\end{cor}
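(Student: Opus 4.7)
The plan is essentially to assemble Corollary \ref{cor:last1} directly from the four parts of Lemma \ref{l:pont-valid}, so the proof sketch is quite short.

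First, I would use the construction from Definition \ref{d:pretableau-to-sequence} as the candidate map. By part (1) of Lemma \ref{l:pont-valid}, this construction sends each pretableau on $\sigma(g,r,d,\alpha,\beta)$ to an element of $\VS(g,r,d,\alpha,\beta)$, so the map is well-defined on pretableaux. By part (3), two pretableaux produce the same valid sequence precisely when they are equivalent in the sense of Definition \ref{d:pontableau}; hence the map descends to a well-defined injection from the set of pontableaux into $\VS(g,r,d,\alpha,\beta)$. Part (2) says every valid sequence arises from some pretableau, which gives surjectivity. Combining these three parts gives the claimed bijection between pontableaux and $\VS(g,r,d,\alpha,\beta)$.

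For the adjacency statement, part (4) of the lemma is exactly what I need: two pontableaux are adjacent in $BN'(\sigma)$ (in the sense of Definition \ref{d:ponadjacency}) if and only if the corresponding valid sequences are off by $1$ in the sense of Definition \ref{d:offby1}. I would simply cite this directly.

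Since every ingredient has already been proved in Lemma \ref{l:pont-valid}, there is no genuine obstacle — the only thing worth being careful about is the logical ordering (wellbeing-defined, then injective on equivalence classes, then surjective, then adjacency), and making sure the reader sees that the hypothesis $\rho(g,r,d,\alpha,\beta)=1$ is inherited from the lemma (via Corollary \ref{c:classification}, which is what allows the inductive reconstruction of a pretableau from a valid sequence in the proof of part (2)). Thus the entire proof can be written in a single sentence: ``This is immediate from Lemma~\ref{l:pont-valid}.''
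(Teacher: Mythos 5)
Your proposal is correct and matches the paper's own treatment: the paper introduces Corollary~\ref{cor:last1} with the single phrase ``Combining all of the statements of Lemma~\ref{l:pont-valid}, we have,'' which is exactly your assembly of parts (1)--(3) for the bijection and part (4) for the adjacency claim. Nothing further is needed.
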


 Finally, it will be useful to record the following observation, which is a corollary of the proof of Lemma~\ref{l:pont-valid}:

\begin{cor}\label{cor:last2}
Pontableaux that are almost-standard Young tableaux missing a given index $i$ 
correspond to valid sequences in which the $i^\text{th}$ step is a stall and the other steps are plods.  
Pontableaux with labels $i$ and $-i$ in different boxes correspond to valid sequences in which the $i^\text{th}$ step is a swap and the other steps are plods. 
\end{cor}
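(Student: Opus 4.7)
The plan is to read off the claim directly from the construction given in the proof of Lemma~\ref{l:pont-valid}, using Corollary~\ref{c:classification} to classify the available step types when $\rho=1$. Under the bijection of Corollary~\ref{cor:last1}, each pontableau $T$ corresponds to the valid sequence $\bs\alpha = (\alpha^1,\ldots,\alpha^{g+1})$ whose $i^\text{th}$ step vector $\ul 1 - \alpha^{i+1} + \alpha^i$ records which positive (and possibly negative) label of absolute value $i$ is placed when reconstructing $T$ from $\bs\alpha$. Since $\rho=1$, Corollary~\ref{c:classification} says exactly one step has vector $e_b - e_a$ (stall if $a=b$, swap if $a\ne b$) and all the remaining $g-1$ steps have vector $e_b$ (plods).

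First, I would argue that a step is a plod if and only if that step places only a positive label. This is immediate from the construction in part~2 of the proof of Lemma~\ref{l:pont-valid}: for a plodding step, $\ul 1 - \alpha^{i+1}+\alpha^i = e_b$, and the construction places only the symbol $+i$ (in the leftmost unfilled box of row $b$), with no negative label. Consequently the negative label $-i$ appears in $T$ precisely at the unique index $i$ where the step is not a plod.

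Next, among non-plodding steps, I would distinguish stall from swap according to whether the positive and negative labels produced coincide in a single box. If step $i$ is a stall, then $a=b$, so the construction places $+i$ and $-i$ in \emph{the same} leftmost unfilled box of row $a$; removing this matched pair $\{+i,-i\}$ then exhibits $T$ as equivalent to an almost-standard Young tableau of shape $\sigma$ missing precisely the label $i$, which is the first alternative described in the definition of pontableau. Conversely, if step $i$ is a swap then $a\ne b$, so the placements lie in distinct rows and therefore in distinct boxes, producing a pretableau whose $+i$ and $-i$ occupy different boxes; by definition no such pretableau is equivalent to an aSYT, so the resulting pontableau genuinely has $+i$ and $-i$ in different boxes.

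The two correspondences in the statement now follow by matching these alternatives. I do not anticipate a real obstacle: once one unpacks the construction from Lemma~\ref{l:pont-valid} and invokes Corollary~\ref{c:classification}, the only subtle point is verifying that the equivalence relation on pretableaux identifies exactly the stall case with an aSYT (and leaves the swap case with distinct boxes as a genuinely new pontableau), which is precisely how the equivalence was defined in Definition~\ref{d:pontableau}.
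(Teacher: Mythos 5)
Your proposal is correct and follows essentially the same route as the paper, which states Corollary~\ref{cor:last2} precisely as a byproduct of the construction in the proof of Lemma~\ref{l:pont-valid}: plodding steps place only a positive label, the unique non-plodding step places the pair $i,-i$, and the stall/swap dichotomy of Corollary~\ref{c:classification} corresponds to the pair landing in one box (hence an aSYT missing $i$, up to the equivalence of Definition~\ref{d:pontableau}) versus in distinct rows. Your indexing (the $i^{\text{th}}$ step governs the labels of absolute value $i$) is the one consistent with Definition~\ref{d:pretableau-to-sequence}, the worked example, and the statement of the corollary, so no adjustment is needed.
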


%SCHEME STRUCTURE
%SCHEME STRUCTURE
%SCHEME STRUCTURE
\section{The scheme structure of $\Grdab(E,p,q)$ on an elliptic curve} \label{sec:elliptic}

In this section, we recall some  definitions and results on the scheme of linear series $\Grd(C)$ on a smooth curve $C$, and on $\Grdab(C,p,q)$,
the twice-pointed version of this scheme. 
We will then fix a single elliptic curve $E$, with two marked points $p,q$ such that the divisor $p-q$ is not a torsion point of $\Pic^0(E)$, and study $\Grdab(E,p,q)$
 when $\rho(1,r,d,\alpha,\beta) \in \{0,1\}$. We will prove that $\Grdab(E,p,q)$ is isomorphic as a scheme to either a (reduced) point, a $\PP^1$, or $E$ itself.  

%Fix integers $d$ and $r$ and ramification sequences $\alpha$ of non-decreasing integers, $\beta$ of non-increasing integers.  We show, in the case $\rho(1,r,d,\alpha,\beta) \in \{0,1\}$,  that the scheme $\Grdab(E,p,q)$ is reduced, and therefore that its scheme structure is determined by the set-theoretic description   in Lemma~\ref{l:3cases}.
  
Let $C$ be a smooth, proper, connected curve of genus $g$ and let $r$ and $d$ be positive integers with $g-d+r \geq 0$.
The Brill-Noether locus $W^r_d(C)$ is a closed subset of the Picard variety $\mbox{Pic}^d(C)$ consisting of line bundles $L$ with at least $r+1$ independent global sections. 
Its natural desingularization $G^r_d(C)$ parametrizes \emph{linear series} on $C$,
 that is, pairs $(L,V)$, such that $L$ is a degree $d$ line bundle on $C$ and $V\subseteq H^0(C,L)$ is an $(r+1)$-dimensional vector subspace of global sections of $L$.
A linear series of degree $d$ and (projective) dimension $r$ is usually called a $g^r_d$. We recall that the space of linear series on a curve can be given a natural scheme structure as a determinantal variety (see \cite[IV.3]{ACGH}).

One can extend this definition to allow for fixed ramification profiles. 
Given a point $p\in C$ and a linear series $(L,V)$, the \emph{vanishing sequence} of $\cL =(L,V)$ at $p$ is the strictly increasing sequence
$$a_0(\cL,p) <\cdots<a_r(\cL,p)$$ 
of the $r+1$ distinct orders of vanishing at $p$ achieved by the sections of $V$. 
The \emph{ramification sequence} $\alpha=(\alpha_0,\dots,\alpha_r)$ is the non-decreasing sequence defined as $\alpha_i=a_i-i$.
 We will also use the letters $b$ and $\beta$ to denote vanishing orders (resp. ramification orders) listed in \textit{decreasing} (resp. non-increasing) order, at a second point $q$. 
 That is, the numbers
$$b_0(\cL,q) > b_1(\cL,q) > \cdots > b_r(\cL,q)$$
will denote the vanishing orders at $q$, and $\beta = (\beta_0,\beta_1,\cdots,\beta_r)$ will be the ramification orders in non-increasing order: $\beta_i = b_i + i - r$.
 Our reason for listing these numbers in different orders at the two different points is that it will substantially declutter several definitions,
  such as the compatibility condition for limit linear series in the following section.

\begin{defn}\label{def:grdab} Let $C$ be a nonsingular projective curve of genus $g$, and let $p,q$ be distinct points of $C$.
Let $r,d$ be nonnegative integers. Let $\alpha$ be a non-decreasing and $\beta$ a non-increasing sequence of $r+1$ nonnegative integers.
We write $\Grdab(C,p,q)$ for the space of $g^r_d$s with ramification sequence at least $\alpha$ (respectively $\beta$) at the point $p$ (respectively $q$), 
with their natural  scheme structure, as explained in \cite[IV.3]{ACGH}.\end{defn}
We will refer to $\Grdab(C,p,q)$ as the \emph{Brill-Noether scheme.}
 From the construction of  $\Grdab(C,p,q)$ as a determinantal variety, together with the dimensions of Schubert cycles corresponding to $\alpha,\beta$ in a suitable Grassmannian, its expected dimension is  the {\em adjusted Brill-Noether number}
$$\rho= \rho(g, r, d, \alpha, \beta)= g-(r+1)(g-d+r)-\sum _i\alpha_i-\sum _i\beta_i.$$
Moreover, every component has at least this expected dimension.

 Another way to define the Brill Noether scheme is via the functor of families of $g^r_d$s with specified ramification that it represents.  % see Section~\ref{sec:elliptic}.
In the case of an elliptic curve $E$, by which we always mean a smooth proper curve of genus 1, and $p,q$ geometric points of $E$ such that $p-q$ is not torsion in $\Pic^0(E)$, we now describe the functor of points of $\Grdab(E,p,q)$.  Observe that the scheme $\Pic^d(E)$ has a degree $d$ vector bundle $\cH$, whose fiber over a closed point $[L] \in \Pic^d(E)$ is naturally identified with the vector space of sections of $L$. The scheme $G^r_d(E)$ (with no imposed ramification) is isomorphic to the Grassmannian bundle of $(r+1)$-planes in $\cH$. To impose ramification, consider two flags in $\cH$:
   
   \begin{eqnarray*}
   	\cH = \cP_0 \supset \cP_1 \supset \cdots \supset \cP_{d-1}\\
	\cH = \cQ_0 \supset \cQ_1 \supset \cdots \supset \cQ_{d-1},
   \end{eqnarray*}   
   where the fibers of $\cP_i$ over $[L]$ correspond to global sections of $L$ vanishing to order at least $i$ at $p$, and the fibers of $\cQ_i$ correspond to sections vanishing to order at least $i$ at $q$.
   
   \begin{defn} \label{def:family}
   	Let $S$ be any scheme. Then a \emph{family over $S$ of $g^r_d$s on $E$ with ramification at least $\alpha$ at $p$ and $\beta$ at $q$} consists of data $(\ell, \cV)$, where $\ell:\ S \rightarrow \Pic^d(E)$ is a morphism, and $\cV$ is a locally free rank $r+1$ subsheaf of $\ell^\ast \cH$ such that for all $i \in \{0,1,\cdots,r\}$, the induced bundle maps
	\begin{eqnarray*}
		\cV &\rightarrow& \ell^\ast (\cH / \cP_{i + \alpha_i})\\
		\cV &\rightarrow& \ell^\ast (\cH / \cQ_{i + \beta_{r-i}})
	\end{eqnarray*}
	each have rank less than or equal to $i$.
   \end{defn}
   
   The scheme structure of $\Grdab(E,p,q)$ is characterized by the following universal property: its functor of points is the functor associating to any scheme $S$ the set of isomorphism classes of families of $g^r_d$s on $E$ with ramification at least $\alpha$ at $p$ and $\beta$ at $q$. See \cite[Theorem 4.1.3]{osserman} for further details, and a proof that this functor is indeed representable.
   
   Our basic tool in analyzing $\Grdab(E,p,q)$ as a scheme is the following lemma, which makes it possible to reduce the analysis to a few simple cases. In the statement below, we use the notation $a_i = \alpha_i + i$ and $b_i = \beta_i + r-i$ (these are the vanishing orders corresponding to the given ramification order) in order to make the statements more concise.
   
   \begin{lemma}\label{splitLemma}
Suppose that $k$ is an index such that one of the following two conditions holds:
\begin{itemize}
\item $a_{k+1} + b_{k} \geq d+1$, or
\item $a_{k+1} + b_{k} = d$, and the fiber of the map $\Grdab(E,p,q) \rightarrow \Pic^d(E)$ over the point corresponding to the line bundle $\cO_E\left( a_{k+1} p + b_{k} q \right)$ is empty.
\end{itemize}
Then the scheme $\Grdab$ is isomorphic to the fiber product
$$G^{r',\alpha',\beta'}_d \times_{ \Pic^d } G^{r'',\alpha'',\beta''}_d$$
where the superscripts in this expression are defined as follows.

\begin{center}$
\begin{array}{rcl c rcl}
r' &=& k && r'' &=& r-k-1\\
\alpha'_i &=& \alpha_i && \alpha''_i &=& \alpha_{k+1+i} + (k+1)\\
\beta'_i &=& \beta_{i} + (r-k) && \beta''_i &=& \beta_{k+1+i}
\end{array}
$\end{center}

In addition, for each closed point $(L,V)$ in $\Grdab$, the corresponding closed points $(L,V'),(L,V'')$ have disjoint sets of vanishing orders at $p$ (resp., $q$), whose union is the set of vanishing orders of $(L,V)$ at $p$ (resp, $q$).
\end{lemma}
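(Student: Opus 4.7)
The plan is to work with the functor-of-points description of $\Grdab(E,p,q)$ from Definition \ref{def:family}, constructing explicit mutually inverse natural transformations between $\Grdab$ and $T := G^{r',\alpha',\beta'}_d \times_{\Pic^d} G^{r'',\alpha'',\beta''}_d$. The central observation, used throughout, is that the hypothesis prevents a nonzero section from vanishing to order $\geq a_{k+1}$ at $p$ and $\geq b_k$ at $q$ simultaneously: if $a_{k+1} + b_k \geq d+1$ this is impossible for degree reasons, while if $a_{k+1} + b_k = d$ such a section would force $L \cong \cO_E(a_{k+1}p + b_k q) =: L_0$, which is excluded by the fiber hypothesis.

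First I would check the pointwise splitting. For a closed point $(L,V) \in \Grdab(E,p,q)$, set $V' = V \cap H^0(L(-b_k q))$ and $V'' = V \cap H^0(L(-a_{k+1} p))$. The ramification conditions force $\dim V' \geq k+1$ and $\dim V'' \geq r-k$, and the central observation gives $V' \cap V'' = 0$. Since $\dim V' + \dim V'' \geq r+1 = \dim V$, both inequalities are equalities and $V = V' \oplus V''$. A direct bookkeeping using $\alpha'_i = a_i - i$, $\beta'_i = b_i + i - k$, and the analogous formulas for $(\alpha'', \beta'')$ shows $V'$ and $V''$ carry the required ramification, and the final assertion of the lemma about disjoint vanishing orders follows from $V' \subseteq H^0(L(-b_k q))$ and $V'' \subseteq H^0(L(-a_{k+1} p))$ combined with $V' \cap V'' = 0$.

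To globalize, given a family $(\ell, \cV)$ over $S$ I would set $\cV' := \ker(\cV \to \ell^*(\cH/\cQ_{b_k}))$ and $\cV'' := \ker(\cV \to \ell^*(\cH/\cP_{a_{k+1}}))$. The ramification hypotheses bound the fiberwise ranks of these quotient maps above by $r-k$ and $k+1$, and the pointwise decomposition shows these ranks are attained everywhere; constancy of rank then implies that both $\cV'$ and $\cV''$ are locally free subsheaves of $\cV$ of the asserted ranks. The induced map $\cV' \oplus \cV'' \to \cV$ is injective on each fiber, hence an isomorphism of rank-$(r+1)$ locally free sheaves. The ramification data $(\alpha', \beta')$ of $\cV'$ and $(\alpha'', \beta'')$ of $\cV''$ can be checked fiberwise by enumerating vanishing orders, using $\cV' \subseteq \ell^* \cQ_{b_k}$, $\cV'' \subseteq \ell^* \cP_{a_{k+1}}$, and the ramification of $\cV$ itself. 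This gives the forward transformation $\Phi \colon \Grdab \to T$.

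The main obstacle is showing the candidate inverse $\Psi \colon (\ell, \cV', \cV'') \mapsto (\ell, \cV' + \cV'')$ is well-defined. One must show that $V' \cap V'' = 0$ at every closed point of $T$: otherwise the central observation forces $L = L_0$, and a combinatorial argument---extending $V' + V''$ by a single section of $H^0(L_0)$, which is possible since $h^0(L_0) = d \geq r+1$---would produce a point of $\Grdab(E,p,q)$ over $L_0$ with the required ramification, contradicting the fiber assumption. Nakayama then upgrades the fiberwise vanishing of $\cV' \cap \cV''$ to the scheme-theoretic vanishing, so $\cV' + \cV''$ is locally free of rank $r+1$. Its ramification $(\alpha, \beta)$ is checked index-wise: for $i \leq k$ the bound at $p$ is inherited from $\cV'$ together with the fact that all of $\cV''$ vanishes to order $\geq a_{k+1} > a_i$, while for $i \geq k+1$ it is inherited from $\cV''$; the check at $q$ is symmetric. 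Finally, $\Phi \circ \Psi$ and $\Psi \circ \Phi$ are the identity on every fiber by construction, so the functor-of-points criterion forces them to be the identity, yielding the required scheme isomorphism.
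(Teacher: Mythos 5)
Your proposal follows the paper's proof almost step for step: the pointwise decomposition $V=V'\oplus V''$ with $V'=V\cap H^0(L(-b_kq))$ and $V''=V\cap H^0(L(-a_{k+1}p))$, the globalization via the kernels of $\cV\to\ell^*(\cH/\cQ_{b_k})$ and $\cV\to\ell^*(\cH/\cP_{a_{k+1}})$ (locally free because the fiberwise ranks are forced to be constant), and the inverse obtained by summing the two subbundles are exactly the paper's argument; the paper merely packages the inverse as a verification of the universal property of the fiber product, which is the same construction as your $\Psi$. The one place where you add detail beyond the paper's wording is the one place where your argument breaks: the proof that $V'\cap V''=0$ at closed points of the fiber product in the borderline case $a_{k+1}+b_k=d$. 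You correctly note that a nonzero common section forces $L\cong L_0=\cO_E(a_{k+1}p+b_kq)$, but you then claim that ``extending $V'+V''$ by a single section of $H^0(L_0)$'' produces a point of $\Grdab(E,p,q)$ over $[L_0]$, contradicting the fiber hypothesis. Only the dimension count works here (a common section has divisor exactly $a_{k+1}p+b_kq$, so $\dim(V'\cap V'')\le 1$); an arbitrary added section does not restore the ramification. Writing $W=V'+V''$, for $i\le k$ one is only guaranteed $\dim\bigl(W\cap H^0(L_0(-a_ip))\bigr)\ge r-i$ rather than the required $r+1-i$ (the common section is counted once), and symmetrically at $q$ for $i\ge k+1$; so the added section would have to be chosen with prescribed vanishing at both points (order at least $a_k$ at $p$ and at least $b_{k+1}$ at $q$ suffices) and to lie outside $W$, none of which follows from $h^0(L_0)=d\ge r+1$, and you give no argument that such a section exists.

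A correct completion needs a different input. From the mere existence of $(L_0,V')$ in $G^{r',\alpha',\beta'}_d$ and $(L_0,V'')$ in $G^{r'',\alpha'',\beta''}_d$, pairing the two vanishing conditions inside $V'$ (resp.\ $V''$) produces, for each $i\le k$ (resp.\ $i\ge k+1$), a nonzero section of $L_0$ vanishing to order at least $a_i$ at $p$ and $b_i$ at $q$; equality $a_i+b_i=d$ would force $\cO_E(a_ip+b_iq)\cong L_0$ and hence, since $p-q$ is non-torsion, $a_i=a_{k+1}$ (resp.\ $b_i=b_k$), which is impossible. So $a_i+b_i\le d-1$ for every $i$, and one must then show that this forces the fiber of $\Grdab(E,p,q)$ over $[L_0]$ to be nonempty; this is true, for instance because the two Schubert varieties in the Grassmannian of $(r+1)$-planes of $H^0(L_0)$ cut out by the vanishing flags at $p$ and $q$ meet for transverse flags exactly when $a_i+b_i\le d-1$ for all $i$, and the locus of flag pairs for which they meet is closed, hence is everything. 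The subtlety is real: when, say, all $a_i+b_i=d-1$ with $a_k=a_{k+1}-1$ and $b_{k+1}=b_k-1$, the natural candidate sections for the two middle indices collapse onto the single special section, so no generic-section count can settle this. To be fair, the paper itself disposes of this step with the phrase ``contradicting the hypothesis of the Lemma''; but note that in the forward direction, where $(L,V)$ is already a point of $\Grdab$, the contradiction with the empty fiber is genuinely immediate, whereas on the fiber-product side it is not, and that is precisely the gap your proposed shortcut does not close.
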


The ramification sequences $\alpha', \beta'$ are determined by the vanishing sequences shown in the following table. Informally, the sequences $\alpha',\beta'$ are chosen so that both vanishing sequences can be split into two non-interacting halves.

\begin{center}
$\begin{array}{|c|c|c|}\hline
 & G^{r',\alpha',\beta'}_d & G^{r'',\alpha'',\beta''}_d\\\hline
 \textrm{vanishing orders at $p$}&
 \{a_0, a_1,  \dots, a_k\}&
 \{a_{k+1}, a_{k+2}, \dots, a_r \}
 \\\hline
 \textrm{vanishing orders at $q$}&
 \{b_0, b_{1}, \dots, b_{k} \}&
 \{b_{k+1}, b_{k+2}, \dots, b_r\}
 \\ \textrm{(reverse order)} &&\\\hline
\end{array}$
\end{center}
   
   \begin{proof}[Proof of Lemma \ref{splitLemma}]
   We first informally summarize the argument. Given a closed point $(L,V)$ of $\Grdab(E,p,q)$, the conditions of the lemma guarantee that the two subspaces $V(-a_{k+1}p)$ and $V(-b_{k} q)$
 of the vector space $V$ intersect trivially. 
 Since their dimensions add up to at least $r+1$, it follows that $V$ is the direct sum of these two subspaces. These two subspaces constitute the corresponding points of $G^{r',\alpha',\beta'}_{d'}(E,p,q)$ and $G^{r'',\alpha'',\beta''}_{d'}(E,p,q)$. Notice that these two linear series uniquely determine the original, by taking the sum of the two vector spaces.
 We prove the lemma by showing that this decomposition can be carried out in families.
 
Let $\cV$ be the tautological subbundle on $\Grdab$ (this corresponds, under the above functorial description, to the bundle on $\Grdab$ given by the identity map on $\Grdab$), and $\ell: \Grdab \rightarrow \Pic^d(E)$ the forgetful map. 
Let $k$ be the index mentioned in the statement of the lemma. Consider the induced morphisms of vector bundles
\begin{eqnarray*}
f_1:\ \cV &\rightarrow& \ell^\ast (\cH / \cP_{a_{k+1}})\\
f_2:\ \cV &\rightarrow& \ell^\ast (\cH / \cQ_{b_{k}})
\end{eqnarray*}

We claim that the kernel of $f_1$ is a vector bundle of rank $r-k$, and the kernel of $f_2$ is a vector bundle of rank $k+1$. 
To see this, notice that over any closed point $(L,V)$ of $\Grdab$, the kernel of $f_1$ can be identified with $V(-a_{k+1} p )$, which is a vector space of dimension at least $r-k$, and the kernel of $f_2$ can be identified with $V(-b_k q)$, of dimension at least $k+1$. These two spaces intersect trivially, so their dimensions must be exactly $r-k$ and $k+1$, respectively. Therefore $f_1$ has rank $k+1$ at all closed points, and in a local trivialization all of its $(k+2) \times (k+2)$ minors vanish; this shows that the kernel of $f_1$ is indeed a vector bundle of rank $r-k$. Similarly, the kernel of $f_2$ is a vector bundle of rank $k+1$.

The map $\ell$ together with the vector bundle $\ker f_1$ constitutes a family of $g^{r'}_{d'}$s with ramification at least $\alpha',\beta'$ at $p$ and $q$, hence we obtain a map $\Grdab(E,p,q) \rightarrow G^{r',\alpha',\beta'}_{d'}(E,p,q)$. We obtain a similar map from $\ker f_2$, hence taking these together gives a map
$$F:\ \Grdab \rightarrow G^{r',\alpha',\beta'}_d \times_{ \Pic^d } G^{r'',\alpha'',\beta''}_d.$$
It remains to show that $F$ is an isomorphism. To do this, it suffices to check that it satisfies the universal property of fiber products. 
Suppose that $S$ is any scheme, with two maps $\ell':\ S \rightarrow G^{r',\alpha',\beta'}_d$ and $\ell'':\ S \rightarrow G^{r'',\alpha'',\beta''}_d$ 
such that the two composition maps to $\Pic^d(E)$ are the same; call this map $\ell:\ S \rightarrow \Pic^d(E)$. 
The two maps $\ell', \ell''$ define two subbundles $\cV', \cV''$ of $\ell^\ast \cU$. 
These two subbundles intersect trivially, because otherwise there would be a closed point $[L]$ of $\Pic^d(E)$ 
and two linear series $(L,V'),(L,V'')$ such that $V' \cap V''$ contains a nonzero section of $L$ vanishing along the divisor $a_{k+1}p + b_{r-k} q$, contradicting the hypothesis of the Lemma.
Therefore their sum $\cV = \cV' + \cV''$ is a rank $r+1$ subbundle of $\ell^\ast \cH$. Now, the bundle map
$$\cV \rightarrow \ell^\ast (\cH / \cP_{a_i})$$
has rank at most the sum of the ranks of the maps from $\cV'$ and $\cV''$ to $\ell^\ast(\cH /  \cP_{a_i})$, which is at most $r+1 - i$. 
Similar remarks hold for the map to $\ell^\ast(\cH/  \cQ_{b_i})$. Therefore $\cV$ defines a morphism $S \rightarrow \Grdab$. 
This is the unique map that factors the two maps from $S$ to $G^{r',\alpha',\beta'}_d$ and $G^{r'',\alpha'',\beta''}_d$. 
Therefore $F$ satisfies the necessary universal property, and must be an isomorphism.
   \end{proof}

The next well-known result gives the conditions on $\alpha$ and $\beta$ for which $\Grdab(E,p,q)$ is nonempty. A general form of this lemma is also included in \cite[Lemma 2.1]{osserman-simpleproof}. We continue to suppose that $E$ is an elliptic curve; $p,q\in E$ are points such that $[p-q]$ is not torsion in $\Pic^0(E)$; $r$ and $d$ are nonnegative integers; and $\alpha = (\alpha_0,\ldots,\alpha_r)\in\mathbb{Z}^{r+1}_{\ge 0}$, respectively $\beta=(\beta_0,\ldots,\beta_r)\in\mathbb{Z}^{r+1}_{\ge 0}$, is a nondecreasing, respectively nonincreasing, sequence.

\begin{lemma}\label{lem:is_nonempty}

The scheme $\Grdab(E,p,q)$ is nonempty if and only if there exists some $t\in\{0,\ldots,r\}$ such that
$$\alpha + \beta \le \ul d - \ul r - \ul 1 + e_t.$$
\end{lemma}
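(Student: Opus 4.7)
The plan is to reformulate the condition in terms of vanishing orders and then prove necessity and sufficiency separately, with the non-torsion hypothesis on $p-q$ playing the central role in both directions. Setting $a_i = \alpha_i + i$ and $b_i = \beta_i + r - i$ for the vanishing orders corresponding to the given ramification, the inequality $\alpha + \beta \le \ul d - \ul r - \ul 1 + e_t$ is equivalent to: $a_i + b_i \le d$ for every $i$, with strict inequality for every index except possibly $i = t$.

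For necessity, suppose $(L, V) \in \Grdab(E,p,q)$ has actual vanishing sequences $(a'_i), (b'_i)$ satisfying $a'_i \ge a_i$ and $b'_i \ge b_i$. The filtrations $V \cap H^0(L(-a'_i p))$ and $V \cap H^0(L(-b'_i q))$ on $V$ have dimensions $r+1-i$ and $i+1$ respectively, so a dimension count inside the $(r+1)$-dimensional space $V$ yields $V \cap H^0(L(-a'_i p - b'_i q)) \ne 0$. This forces $a'_i + b'_i \le d$, and if equality held for two distinct indices $i \ne j$, then $L$ would have to be isomorphic to both $\mathcal{O}(a'_i p + b'_i q)$ and $\mathcal{O}(a'_j p + b'_j q)$, forcing $(a'_i - a'_j)(p-q) \sim 0$ in $\Pic^0(E)$; since $a'_i \ne a'_j$ and $p-q$ is non-torsion, this is impossible. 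Combining with $a'_i \ge a_i$ and $b'_i \ge b_i$, we deduce that $a_i + b_i \le d$ for all $i$, with equality for at most one index, which is exactly the stated condition on $\alpha + \beta$.

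For sufficiency, choose $L$ as follows: if some index $t$ satisfies $a_t + b_t = d$, take $L = \mathcal{O}(a_t p + b_t q)$; otherwise pick a generic $L \in \Pic^d(E)$. By Riemann-Roch, $h^0(L(-a_i p - b_i q)) \ge 1$ for every $i$. The key step is to exhibit a section $s_i \in H^0(L(-a_i p - b_i q))$ whose vanishing orders at $p$ and $q$ are exactly $a_i$ and $b_i$, which amounts to the strict inequalities $h^0(L(-(a_i+1)p - b_i q)) < h^0(L(-a_i p - b_i q))$ and its analogue at $q$. When $d - a_i - b_i \ge 2$ this is automatic from Riemann-Roch; in the boundary cases (degrees $0$ and $1$) the non-torsion hypothesis is essential, ruling out accidental sections on degree-zero line bundles that differ from trivial by an integer multiple of $p - q$. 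The resulting sections $s_0, \ldots, s_r$ have distinct $p$-vanishing orders, so they are linearly independent, and a direct local-coordinate computation shows that for any combination $\sum c_i s_i$ the $p$-vanishing is $a_{\min\{i:c_i\ne 0\}}$ and the $q$-vanishing is $b_{\max\{i:c_i\ne 0\}}$. Hence $V = \langle s_0, \ldots, s_r\rangle$ has vanishing sequences $(a_i)$ at $p$ and $(b_i)$ at $q$, producing a point of $\Grdab(E,p,q)$. The main obstacle is the exact-vanishing step: verifying strict decrease of the filtrations in the boundary-degree cases, which is precisely where the non-torsion assumption on $p - q$ is used.
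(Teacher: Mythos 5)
Your proof is correct and follows essentially the same route as the paper: necessity via a dimension count inside the $(r+1)$-dimensional space $V$ together with the non-torsion hypothesis on $p-q$ to rule out equality at two indices, and sufficiency via an explicit construction of sections with prescribed vanishing at $p$ and $q$ whose span gives a point of $\Grdab(E,p,q)$. The only cosmetic difference is that the paper first increases $\alpha,\beta$ to the extremal case $\alpha+\beta=\ul d-\ul r-\ul 1+e_t$, so that every twist $L(-a_jp-b_jq)$ has degree $0$ or $1$ and the sections are unique up to scale, whereas you work with the given data (taking $L$ generic when no equality index exists) and handle higher-degree twists directly.
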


\begin{proof}
Suppose $(L,V)$ is a point in $\Grdab(E,p,q)$.  
For each $t\in \{ 0,\dots, r\}$, the space of sections in $V$ with ramification at least $\alpha_t$ at $p$ has dimension at least $r-t+1$, 
and
the space of sections with ramification at least $\beta_t$ at $q$ has dimension at least $t+1$.
Since $\dim V = r+1$, there exists some section in $V$ satisfying the two ramification conditions. 
The orders of vanishing at these points of such a section are at least $\alpha_t+t$ and  $\beta_t+r-t$. 
As the degree of $L$ is $d$, this implies $\alpha_t+\beta_t+r\le d$.
Furthermore, the genericity of the points $p$ and $q$ implies that equality is obtained at most for one value of $t$.
The conditions on $\alpha$ and $\beta$ can thus  be written
$$ \alpha +\beta \le \ul d - \ul r - \ul 1 +e_t.$$

Conversely, suppose $t\in\{0,\ldots,r\}$ is such that $\alpha +\beta \le \ul d - \ul r - \ul 1 +e_t.$ 
 Increasing the numbers in $\alpha$ and $\beta$, we may as well assume that $\alpha +\beta = \ul d - \ul r - \ul 1 +e_t$, 
 since if the new conditions give a nonempty scheme then the original conditions did too. 
  Recall that we write $a_i = \alpha_i + i$ and $b_i = \beta_i + r-i$ for the vanishing orders corresponding to $\alpha$ and $\beta$.
    So $a_t + b_t = d$ and $a_j + b_j = d-1$ for all $j\in\{0,\ldots,r\}$ with $j\ne t$. 

Let $L \cong \cO(a_t p + b_t q)$; then for each $j=0,\ldots, r$ including $j=t$, there exists a unique, up to scaling, nonzero section $s_j\in H^0(L)$
 with vanishing order exactly $a_j$ at $p$ and exactly $b_j$ at $q$, since there exists a unique, up to scaling, nonzero section of 
 $L(-a_jp - b_j q)\cong \cO((a_t\!-\!a_j)p + (b_t\!-\!b_j)q))$ and there are no sections of $L(-(a_j+1)p - b_j q), L(-a_jp - (b_j +1)q)$
 for each $j=0,\ldots,t$.  Furthermore all the sections $s_j$ are clearly independent,
  since they have distinct orders of vanishing at $p$ and $q$, so we have exhibited a point $(L,V=\langle s_0,\ldots,s_r\rangle)$ in $\Grdab(E,p,q)$.  
\end{proof}
   We will require the following simple case for our next argument.
   
   \begin{lemma} \label{l:r0}
   	Suppose that $a,b,d$ are such that $d - a- b \geq 0$. Then
	$$G^{0,(a),(b)}_{d} (E,p,q) \cong G^0_{d-a-b}(E) \cong \textrm{Sym}^{d-a-b}(E).$$
	In particular, 
	\begin{itemize}
	\item if $d - a - b \geq 1$ then $G^{0,(a),(b)}_d(E,p,q)$ is a $\PP^{d-a-b-1}$-bundle over $\Pic^d(E)$, and 
	\item if $d -a - b = 0$ then $G^{0,(a),(b)}_d(E,p,q)$ is a single reduced point, mapping to $[ \cO_E( a p + b q) ] \in \Pic^d(E)$.
	\end{itemize}
   \end{lemma}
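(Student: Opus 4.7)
The plan is to exhibit an isomorphism at the level of the moduli functors described in Definition~\ref{def:family}, and then use the standard identification of the Abel--Jacobi map on an elliptic curve.

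First, unpack what a family over $S$ of $g^{0,(a),(b)}_d$s looks like. Since $r=0$ and $\alpha_0=a,\beta_0=b$, Definition~\ref{def:family} asks for a morphism $\ell\colon S\to\Pic^d(E)$ and a line subbundle $\cV\subseteq \ell^*\cH$ such that the two composite maps $\cV\to \ell^*(\cH/\cP_a)$ and $\cV\to \ell^*(\cH/\cQ_b)$ each have rank $\le 0$, i.e.\ vanish identically.  Equivalently, $\cV$ is a line subbundle of $\ell^*(\cP_a\cap\cQ_b)$, the subsheaf of $\cH$ whose fiber over $[L]$ is $H^0(E,L(-ap-bq))$.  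The twist-by-$(-ap-bq)$ map $\tau\colon \Pic^d(E)\to\Pic^{d-a-b}(E)$ is an isomorphism and carries $\cP_a\cap\cQ_b$ to the pullback of the corresponding Hodge-type bundle $\cH'$ on $\Pic^{d-a-b}(E)$.  Thus giving $(\ell,\cV)$ is the same as giving a morphism $\tilde\ell=\tau\circ\ell\colon S\to\Pic^{d-a-b}(E)$ together with a line subbundle $\cV\subseteq\tilde\ell^*\cH'$, which is precisely a family of $g^0_{d-a-b}$s on $E$ (with no ramification imposed).  This establishes $G^{0,(a),(b)}_d(E,p,q)\cong G^0_{d-a-b}(E)$ as schemes over $\Pic^d(E)$, where the map to $\Pic^d(E)$ on the right is through $\tau^{-1}$.

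Next, identify $G^0_n(E)\cong \mathrm{Sym}^n E$ where $n=d-a-b$.  A $g^0_n$ on $E$ is a pair $(L,V)$ with $V\subset H^0(E,L)$ of dimension $1$; a choice of nonzero $s\in V$ is the same as a choice of $V$ up to the $\mathbb{G}_m$-action, and in families $V\mapsto \mathrm{div}(s)$ identifies such pairs with effective divisors of degree $n$ on $E$.  Concretely, the universal effective divisor on $\mathrm{Sym}^n E\times E$ yields a line bundle on $\mathrm{Sym}^n E\times E$ with a section unique up to scaling in each fiber, producing a morphism $\mathrm{Sym}^n E\to G^0_n(E)$ whose inverse is the Abel map on families.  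Both are classical and one checks they are inverse via the universal property.

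Finally, transport the standard structure of the Abel--Jacobi map on an elliptic curve through this identification.  For $n\ge 1$ and any $L\in\Pic^n(E)$ we have $h^0(E,L)=n$ by Riemann--Roch, so the Abel--Jacobi map $\mathrm{Sym}^n E\to\Pic^n E$ is a $\PP^{n-1}$-bundle (in particular smooth of relative dimension $n-1$).  Composing with $\tau^{-1}$ gives the claimed $\PP^{d-a-b-1}$-bundle structure over $\Pic^d(E)$.  When $n=0$, $\mathrm{Sym}^0 E=\mathrm{Spec}\,k$ and its image in $\Pic^d(E)$ under $\tau^{-1}$ is $[\cO_E(ap+bq)]$, giving the single reduced point statement.

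The main point requiring care is the first paragraph, namely verifying that the functor-level identification with $G^0_{d-a-b}(E)$ preserves scheme structure.  Everything else is either the standard Abel--Jacobi description or a routine consequence of Riemann--Roch on the elliptic curve $E$.
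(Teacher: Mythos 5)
Your overall route is the same as the paper's, which disposes of the lemma in two sentences: the first isomorphism is read off from the functorial Definition~\ref{def:family} (your twist-by-$-ap-bq$ argument is exactly the intended unpacking), and the second is the standard identification $G^0_n(E)\cong\mathrm{Sym}^n(E)$, cited to ACGH, with the two bullets following from the usual structure of the Abel--Jacobi map. For $d-a-b\ge 1$ your argument is correct as written: there $\cP_a\cap\cQ_b$ has constant fiber dimension $d-a-b$, is a subbundle with locally free quotient (since $\cP_a+\cQ_b=\cH$ fiberwise), so the pullback and twist manipulations are legitimate and the $\PP^{d-a-b-1}$-bundle statement follows from Riemann--Roch as you say.

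The step that fails as literally written is the degenerate case $d-a-b=0$, which is precisely the case the paper leans on later (the reduced point in Proposition~\ref{oneCurveV2} is what makes the whole Brill--Noether curve reduced). When $d=a+b$ there is no ``Hodge-type bundle $\cH'$'' on $\Pic^{0}(E)$: the pushforward of a degree-zero Poincar\'e bundle is torsion-free and generically zero, hence the zero sheaf, and $\cP_a\cap\cQ_b$ is not locally free (its fibers jump from $0$ to $1$ at $[\cO_E(ap+bq)]$). So the sentence ``$\tau$ carries $\cP_a\cap\cQ_b$ to the pullback of $\cH'$'' has no content there, and taken at face value it would say the moduli problem is empty, which it is not. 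You need a separate argument in this case: either define $G^0_0(E)$ by the determinantal construction and use that the theta divisor of an elliptic curve is a single \emph{reduced} point (multiplicity $h^0(\cO_E)=1$ by the Riemann singularity theorem, or simply $\deg\Theta=1$), or argue directly that for any family $(\ell,\cV)$ over a base $S$ the vanishing conditions force $\ell$ to factor through the reduced point $[\cO_E(ap+bq)]$ --- e.g.\ a first-order check over $k[\epsilon]$ showing that a rank-one locally free subsheaf of $\ell^*\cP_a\cap\ell^*\cQ_b$ exists only when the tangent vector is zero, because the obstruction is the nonvanishing derivative of the determinantal equation cutting out the jump locus. With that case handled, the rest of your proof matches the paper's intent; the reducedness at $d=a+b$ is the one piece of genuine content and should not be absorbed into the bundle-theoretic identification.
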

   
   \begin{proof}
   	The first isomorphism follows from the definition of a family of $g^r_d$s with ramification. The second isomorphism is standard; see for example \cite[VII, Proposition 2.1]{ACGH}.
   \end{proof}

   We now prove our desired results on $\Grdab(E,p,q)$ when $\rho=0$ or $1$ in the following proposition.
   
   \begin{prop} \label{oneCurveV2}
   Suppose $d,r,\alpha,\beta$ are chosen so that $\Grdab(E,p,q)$ is nonempty. 
   \begin{enumerate}
   \item 
   	If $\rho(g,r,d,\alpha,\beta)=1$, then:
	\begin{enumerate}
	\item Either there exists $k\in\{0,\ldots,r\}$ such that $\alpha_k +\beta_k = \ul d - \ul r$, in which case $\Grdab(E,p,q) \cong \PP^1$, or
	\item for every $j\in\{0,\ldots,r\}$, $\alpha_j + \beta_j = \ul d - \ul r - 1$, in which case $\Grdab(E,p,q) \cong E$.
	\end{enumerate}
	\item If $\rho(g,r,d,\alpha,\beta)=0$, then $\Grdab(E,p,q)$ is a reduced point.
	\end{enumerate}
   \end{prop}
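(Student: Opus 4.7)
The plan is to combine Lemma~\ref{splitLemma} with the functorial description in Definition~\ref{def:family} to produce scheme-theoretic isomorphisms in each case. First, I will carry out a numerical case analysis. The identity
$$\sum_{i=0}^{r}(a_i+b_i)=|\alpha|+|\beta|+r(r+1)=(r+1)(d-1)+1-\rho,$$
combined with the constraint $a_i+b_i \le d-1+\delta_{it}$ for some $t$ from Lemma~\ref{lem:is_nonempty}, shows that when $\rho=1$ either (1b) every $a_j+b_j=d-1$, or (1a) exactly one index $k$ has $a_k+b_k=d$ and exactly one other index $i^{\ast}$ has $a_{i^{\ast}}+b_{i^{\ast}}=d-2$; and when $\rho=0$, exactly one index $k$ has $a_k+b_k=d$ while all other $a_j+b_j=d-1$.

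For case 1(b), I will construct an inverse to the forgetful morphism $\Grdab(E,p,q)\to\Pic^d(E)$ via Definition~\ref{def:family}. For each $j$, the sheaf $\cH\cap\cP_{a_j}\cap\cQ_{b_j}$ on $\Pic^d(E)$ has fibre $H^0(L(-a_jp-b_jq))$, which is $1$-dimensional for all $L$ since $a_j+b_j=d-1$; call this line bundle $\cV_j$. The inclusions $\cV_j\hookrightarrow\cH$ come with pairwise distinct fibrewise vanishing orders at $p$, so the natural map $\bigoplus_j\cV_j\to\cH$ is fibrewise injective and its image is a rank-$(r+1)$ subbundle $\cV$. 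Because $\cV\cap\cP_{a_i}\supseteq\cV_i\oplus\cdots\oplus\cV_r$, and analogously at $q$, the pair $(\mathrm{id},\cV)$ is a family of $\Grdab$ on $\Pic^d(E)$, inducing a classifying morphism $\Pic^d(E)\to\Grdab$. Conversely, in any family $(\ell,\cV')$ over $S$, the subsheaf $\cV'\cap\ell^{\ast}\cP_{a_j}\cap\ell^{\ast}\cQ_{b_j}$ lies inside the line bundle $\ell^{\ast}\cV_j$ and has fibrewise rank $\ge 1$ by combining the two Schubert bounds at index $j$; hence it equals $\ell^{\ast}\cV_j$, and summing over $j$ forces $\cV'=\ell^{\ast}\cV$. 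The two morphisms are therefore mutually inverse, giving $\Grdab(E,p,q)\cong\Pic^d(E)\cong E$.

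For case 1(a) and case 2, I will apply Lemma~\ref{splitLemma} at indices $k-1$ and $k$, or only one of these when $k\in\{0,r\}$: the hypothesis $a_k+b_{k-1}\ge d+1$ is automatic since $b_{k-1}>b_k$ and $a_k+b_k=d$, and symmetrically for $a_{k+1}+b_k$. This presents
$$\Grdab\cong G_{\mathrm{top}}\times_{\Pic^d(E)}G_{\mathrm{mid}}\times_{\Pic^d(E)}G_{\mathrm{bot}},$$
where by Lemma~\ref{l:r0} the middle factor $G_{\mathrm{mid}}=G^{0,(a_k),(b_k)}_d(E,p,q)$ is the reduced point $[L_0]$ with $L_0:=\cO_E(a_kp+b_kq)$; thus $\Grdab\cong G_{\mathrm{top}}|_{L_0}\times_k G_{\mathrm{bot}}|_{L_0}$. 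In case 2, every row $i\neq k$ has $a_i+b_i=d-1$, so $G_{\mathrm{top}}$ and $G_{\mathrm{bot}}$ each satisfy the hypotheses of case 1(b); by the isomorphism with $\Pic^d(E)$ already established, each fibre over $L_0$ is a reduced point, and their product is a reduced point. In case 1(a), one of $G_{\mathrm{top}},G_{\mathrm{bot}}$ still satisfies case 1(b) (and restricts to a reduced point over $L_0$), while the other contains the slack index $i^{\ast}$; for this latter factor I argue from Definition~\ref{def:family} that the $1$-dimensional spaces $H^0(L_0(-a_ip-b_iq))$ for $i\neq i^{\ast}$ pin down unique sections $s_i\in V$, and the only remaining freedom is a line in the $2$-dimensional space $W:=H^0(L_0(-a_{i^{\ast}}p-b_{i^{\ast}}q))$, yielding a scheme-theoretic isomorphism with $\PP(W)\cong\PP^1$ by the same universal-property argument used in case 1(b).

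The main technical obstacle will be upgrading these bijections on closed points to scheme-theoretic isomorphisms. In the tight cases this entails showing that the forgetful morphism factors through the reduced point $[L_0]\subset\Pic^d(E)$, and in all cases it involves identifying the universal subbundle $\cV'$ with the explicit direct sum constructed (plus a tautological line in case 1(a)); both steps rely on comparing ranks of intersections of the form $\cV'\cap\ell^{\ast}\cP_{a_i}\cap\ell^{\ast}\cQ_{b_i}$ with the known ranks of the coherent sheaves $\cH\cap\cP_{a_i}\cap\cQ_{b_i}$ on $\Pic^d(E)$.
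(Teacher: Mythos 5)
Your numerical case analysis is correct, and the application of Lemma~\ref{splitLemma} at the indices $k-1$ and $k$ (with the observation $a_k+b_{k-1}\ge d+1$, $a_{k+1}+b_k\ge d+1$) is fine, but the heart of the argument --- case 1(b) --- has a genuine gap. You claim that the line subsheaves $\cV_j=\cP_{a_j}\cap\cQ_{b_j}$, with fibres $H^0(L(-a_jp-b_jq))$, have pairwise distinct fibrewise vanishing orders at $p$, so that $\bigoplus_j\cV_j\to\cH$ is fibrewise injective. This is false whenever two consecutive vanishing orders satisfy $a_{j+1}=a_j+1$, a situation case 1(b) does not exclude (it always occurs when $\alpha=\beta=0$, $d=r+1$). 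The generator of $\cV_j$ vanishes at $p$ to order $a_j$ \emph{or} $a_j+1$ depending on $L$: at $L\cong\cO_E(a_{j+1}p+b_jq)$ the lines $H^0(L(-a_jp-b_jq))$ and $H^0(L(-a_{j+1}p-b_{j+1}q))$ coincide inside $H^0(L)$, both being spanned by the section with divisor $a_{j+1}p+b_jq$. Already for $G^1_2(E)$ (so $r=1$, $d=2$, $\alpha=\beta=0$, all $a_i+b_i=d-1$), at $L=\cO_E(p+q)$ the lines $H^0(L(-q))$ and $H^0(L(-p))$ are equal. At such $L$ your map $\bigoplus_j\cV_j\to\cH$ drops rank, so it does not define a rank-$(r+1)$ subbundle and hence no family over all of $\Pic^d(E)$; and the uniqueness step fails at the same points, since knowing that $\cV'$ contains each $\ell^*\cV_j$ no longer determines $\cV'$ when those lines span fewer than $r+1$ dimensions. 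This collision is exactly what the hypotheses of Lemma~\ref{splitLemma} ($a_{k+1}+b_k\ge d+1$, or $=d$ with empty fibre) are designed to exclude; your construction in effect performs the splitting over all of $\Pic^d(E)$ with no such hypothesis available.

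The gap propagates: your case 2, and half of case 1(a), invoke the case 1(b) isomorphism for $G_{\mathrm{top}}$ and $G_{\mathrm{bot}}$. The portion of case 1(a) argued directly over $L_0=\cO_E(a_kp+b_kq)$ can be salvaged, but only because over this particular line bundle the collision cannot occur: having the generator of $H^0(L_0(-a_ip-b_iq))$ vanish to order $a_i+1$ at $p$ (or $b_i+1$ at $q$) would force $\cO_E((a_i+1)p+b_iq)\cong\cO_E(a_kp+b_kq)$ with $i\ne k$, contradicting that $p-q$ is non-torsion --- this is precisely where the genericity hypothesis must enter, and you never invoke it. The paper's proof avoids the problem by handling case (b) by induction: it splits off a block only when $a_{i+1}>a_i+1$ (so the offending line bundle satisfies $a_{i+1}+b_i\ge d+1$), and treats the remaining fully consecutive case $a_{i+1}=a_i+1$ for all $i$ by the separate identification $\Grdab(E,p,q)\cong G^r_{r+1}(E)\cong\Pic^{r+1}(E)$. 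To repair your argument you would either follow that route, or replace $\bigoplus_j\cV_j$ by the saturation of its image and redo both the verification of the ramification conditions and the uniqueness argument at the special line bundles --- which is essentially the extra work the paper's induction performs.
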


   \begin{proof}
   	As before, we will use the notation $a_i = \alpha_i +i,\ b_i = \beta_i + r-i$ to denote the vanishing orders corresponding to the given ramification orders. 
Note that $$\rho(1,r,d,\alpha,\beta) = 1+\sum_{i=0}^r (d-r-1-\alpha_i-\beta_i).$$
By assumption, $\Grdab(E,p,q)$ is nonempty, so from Lemma~\ref{lem:is_nonempty} it follows that each of the $r+1$ terms 
\begin{equation}\label{eq:term}
d-r-1-\alpha_i-\beta_i,\quad i = 0,\ldots,r
\end{equation} is nonnegative, except possibly for one term which may be $-1$.  

Now suppose $\rho = 1$.  Our analysis implies that we have the following two cases: %Either there exists $k\in\{0,\ldots,r\}$ with $d-r-1\alpha_i-\beta_i = -1$.  

   	\textit{Case (a):} One of the $r+1$ terms is $-1$, one of them is $1$, and the rest are $0$. In other words, there exist distinct indices $j,k\in\{0,\ldots,r\}$ such that 
	\begin{equation}\label{eq:3cases}
	\alpha_i + \beta_i = \begin{cases}
	d-r& \text{if }i=k,\\
	d-r-2& \text{if }i=j,\\
	d-r-1&\text{otherwise.}\end{cases}
	\end{equation}
%	Then the only point in $\Pic^d(C)$ in the image of $\Grdab(E,p,q)$ is the closed point $[\cO_E(a_k p + b_k q)]$. 
We claim that the hypotheses of Lemma \ref{splitLemma} hold for every index $i \in \{0,1,\cdots,r-1\}$. First observe that for all such $i$, $a_{i+1} + b_i$ is greater than both $a_i + b_i$ and $a_{i+1} + b_{i+1}$. By~\eqref{eq:3cases}, at least one of these two quantities is greater than or equal to $d-1$. So $a_{i+1} + b_i \geq d$ for all $i\in\{0,\ldots,r-1\}$. Furthermore, when equality holds we necessarily have $i \neq k$, so $(a_{i+1} p + b_i q) \neq (a_k p + b_k q)$, so the fiber of $\Grdab(E,p,q)$ over $[\cO_E(a_{i+1}p + b_i q)]$ is empty. Therefore we apply Lemma \ref{splitLemma} repeatedly to obtain
	$$\Grdab \cong G^{0, (a_0),(b_0)}_d \times_{\Pic^d} \cdots \times_{\Pic^d} G^{0,(a_r),(b_r)}_d.$$
	It now follows from Lemma \ref{l:r0} that the $j^\text{th}$ factor in this fiber product is a $\PP^1$-bundle over $\Pic^d(E)$, the $k^\text{th}$ factor is a reduced point mapping to $[\cO_E(a_k p + b_k q)]\in\Pic^d(E)$, and all other factors map isomorphically to $\Pic^d(E)$.  It follows that  $\Grdab$ is isomorphic to $\PP^1$ as a scheme.
	
	\textit{Case (b):} Each of the $r+1$ terms in~\eqref{eq:term} are $0$.  In other words, $\alpha_i + \beta_i = d-r-1$ for all $i$. In this case we claim that $\Grdab(E,p,q) \cong \Pic^d(E)$ scheme-theoretically. We argue by induction on $r$; the base case $r=0$ is trivial. For larger $r$, we consider two cases. First, suppose that $a_{i+1} > a_i + 1$ for some $i \in \{0,1,\cdots,r-1\}$. Then in this case we can apply Lemma \ref{splitLemma} to express $\Grdab(E,p,q)$ as a fiber product of $\Pic^d(E)$ of two schemes, which (by the inductive hypothesis) both map isomorphically to $\Pic^d(E)$, and the result follows. 
	
	In the other case, $a_{i+1} = a_i + 1$ for all $i$. In this case, it follows that $\Grdab(E,p,q) \cong G^r_{d-a_0 - b_r}(E)$, and a short calculation shows that $d-a_0 - b_r = r+1$. The result now follows from the fact that $G^r_{r+1}(E) \cong \Pic^{r+1}(E)$.

Now for part (2) of the Proposition, suppose $\rho =0$.  Then it must be the case that one of the terms in \eqref{eq:term} is $-1$ and the rest are $0$.  In other words, there exists $k\in\{0,\ldots,r\}$ such that 
	\begin{equation}\label{eq:2cases}
	\alpha_i + \beta_i = \begin{cases}
	d-r& \text{if }i=k,\\
	d-r-1&\text{otherwise.}\end{cases}
	\end{equation}
Then the same reasoning from Case (a) above applies: we deduce that the hypotheses of Lemma~\ref{splitLemma} hold for each $i\in\{0,\ldots,r-1\}$, so we again apply that Lemma repeatedly to obtain
	$$\Grdab \cong G^{0, (a_0),(b_0)}_d \times_{\Pic^d} \cdots \times_{\Pic^d} G^{0,(a_r),(b_r)}_d.$$
From here, using Lemma~\ref{l:r0} it follows again that the $k^\text{th}$ factor of this fiber product is a reduced point mapping to $[\cO_E(a_k p + b_k q)]\in\Pic^d(E)$, while every other factor maps to $\Pic^d$ isomorphically. We conclude that $\Grdab(E,p,q)$ is a reduced point mapping to $[\cO_E(a_k p + b_k q)]\in\Pic^d(E)$.
   \end{proof}

%LIMIT LINEAR SERIES
%LIMIT LINEAR SERIES
%LIMIT LINEAR SERIES
\section{Limit linear series on an elliptic chain}
\label{sec:prelim}

Having studied linear series on twice-pointed elliptic curves in the previous section, in this section we study limit linear series on chains of elliptic curves of genus $g$, and relate the scheme of such limit linear series to our earlier combinatorial constructions. After defining limit linear series on chains of elliptic curves, we will give a construction that associates to each valid sequence $\bs \alpha$ (and therefore, when $\rho = 1$, to each pontableau) a set $C(\bs \alpha)$ of limit linear series, and prove (Corollary \ref{c:union_of_c}) that 
$$
\Grdab(X,p,q) = \bigcup_{\bs \alpha \in \VS(g,r,d,\alpha,\beta)} C(\bs \alpha).
$$
We then specialize to the case $\rho = 1$, and obtain specific information about the geometry of each $C(\bs \alpha)$ and show that their incidence relations are encoded by the augmented Brill-Noether graph of Section~\ref{sec:ponttab}.

%%%%%%%%

 First we introduce limit linear series and the Eisenbud-Harris scheme structure on them.  
A proper, connected nodal curve is said to be of {\em compact type} if its Jacobian is compact or equivalently, if its dual graph has no loops. 
The right notion of linear series for compact type curves is the concept of  limit linear series (see \cite{eh-lls}):
 \begin{defn}\label{def:limitgrd}
Let $C$ be a curve of compact type with irreducible components $\{C_i\}$. 
A \emph{limit linear series} of degree $d$ and rank $r$ on $C$, also called a {\em limit $\grd$} on $C$,
 is the data of a linear series $\cL_i=(L_i,V_i)$ of degree $d$ and rank $r$ on each $C_i$, 
subject to the following condition. Given a node of $C$ obtained by gluing points $p_i\in C_i$ and $p_j\in C_j$, 
denote by $a_t(\cL_i, p_i)$ the orders of vanishing at $p_i$ of the linear series $(L_i,V_i)$ written in increasing order and by 
$b_{t}(\cL_j,p_j)$ the orders of vanishing at $p_j$ of  $(L_j,V_j)$ written in decreasing order, for each $t=0,\ldots,r$. 
Then
  $$a_t(\cL_i, p_i)+b_{t}(\cL_j,p_j) \ge d \quad \mbox{ for each }\quad t=0,\dots, r.$$
A limit $g^r_d$ is called \emph{refined} if at each node, each of the $r+1$ 
inequalities above are equalities. Otherwise it is called {\em coarse}.  The pair $(L_i, V_i)$ is called the
\emph{aspect} of the series on the component $C_i$. 
\end{defn}   

We again write $\Grd(C)$ for the limit $\grd$s on the compact type curve $C$ and 
$\Grdab(C,p,q)$ for the limit $\grd$s on a twice-marked $(C,p,q)$, where $p,q$ are smooth points of $C$, with ramification profiles $\alpha, \beta$.

Now we define generic chains of elliptic curves, which will be the degeneration that we use in the rest of our arguments.

 \begin{defn}\label{def:chain} Let $E_1,\dots,E_g$ be elliptic curves, with $p_i$ and $q_i$ distinct points on $E_i$ for each $i$.
 Glue $q_i$ to $p_{i+1}$ for $i = 1,\dots, g-1$, to form a nodal curve that we call a \emph{chain of elliptic curves} $X$ of genus $g$.
  If $p_i-q_i$ is not a torsion element in $\Pic^0(E_i)$ for any $i$, we say that $X$ is a {\em generic elliptic chain}.
  \end{defn}
 Throughout, we shall also consider $X$ as a point of $\overline{\mathcal{M}}_{g,2},$ with two marked points $p=p_1$ and $q=q_g$; 
 we will refer to such an $(X,p,q)$ as a {\em twice-marked generic elliptic chain. }
We specify once and for all a scheme structure on $\Grdab(X,p,q)$ that we will henceforth refer to as the Eisenbud-Harris scheme structure. 
 (It can be defined for the limit linear series on any compact type curve, as in \cite{eh-lls}; we restrict our definition to this case purely to ease the notation.)

\begin{defn}\label{def:eh-structure}
Let $(X,p,q)$ be a twice-marked elliptic chain as in~\ref{def:chain}. Choose $g, r, d, \alpha, \beta$ as in the setup of Definition~\ref{def:grdab}.
 The {\em Eisenbud-Harris scheme} $\Grdab(X,p,q)$ is the scheme obtained as the union
\begin{equation}\label{eq:union}
\bigcup \prod_{i=1}^g G^{r,\alpha^i,\beta^i}_d (E_i,p_i,q_i) \subseteq \prod_{i=1}^g \Grd(E_i)
\end{equation}
where the union is over all choices of ramification profiles $(\alpha^1,\beta^1,\ldots,\alpha^g,\beta^g)$
with the property that 
\begin{itemize}
\item $\alpha^1 = \alpha$ and $\beta^g = \beta$, and
\item 
for each $i=1,\ldots,g-1$ and each $t=0,\ldots,r$, we have
$$\beta^i_t + \alpha^{i+1}_{t} = d-r.$$
\end{itemize}
\end{defn}
\noindent In other words, we take the union over all possible {\em refined} ramification profiles on $X$ that have ramification exactly $\alpha$ at $p=p_1$ and $\beta$ at $q=q_g$.
 All limit linear series  are present in the locus defined by at least one of these ramification profiles. 
 For coarse series, one may construct profiles $\alpha^i, \beta^i$ containing that series by decreasing the required ramification of the linear series at the nodes arbitrarily so that  the inequalities become an equalities.

Our next goal is to describe $\Grdab(X,p,q)$ when $\rho=1$ (see Definition~\ref{def:eh-structure}).
  We shall show that its components have the structure of elliptic and rational curves, 
  and that they intersect according to the augmented Brill-Noether graph defined in Definition~\ref{d:ponadjacency}.

 Recall from Definition~\ref{d:validseq} 
 the definition of a valid sequence $\bs \alpha\in \mathrm{VS}(g,r,d,\alpha,\beta)$ associated to $g,r,d,\alpha,$ and $\beta$.
 Furthermore, recall Corollary~\ref{c:classification} and the terminology therein: when $\rho=1$, in a valid sequence 
$\bs \alpha$ exactly one of the $g$ steps of $\bs \alpha$ is either a {\em stalling} step or a {\em swapping} step, and all of the other steps are {\em plodding} steps.

We show how for any $\rho$, the components of the Eisenbud-Harris scheme $\Grd(X,p,q)$ are in bijection with valid sequences.  
     
\begin{defn}\label{def:C(alpha)}
Fix $g\ge 1,$ let $r$ and $d$ be nonnegative integers, $\alpha$ a non-decreasing and $\beta$ a non-increasing $(r+1)$-sequence of non-negative integers.
  Let $\bs \alpha = (\alpha^1,\ldots,\alpha^{g+1})$ consist of $g+1$ sequences of nondecreasing $(r\!+\!1)$-tuples with 
$\ul 0 \le \alpha^i \le \ul d - \ul r,$ and $\alpha^1=\alpha$.
Define the {\em complementary sequence} $\bs \beta = (\beta^0,\ldots,\beta^g)$ by $$\beta^i = \ul d - \ul r - \alpha^{i+1},$$ and assume 
that $\beta^g = \beta$.  Then we define the scheme
$$C(\bs \alpha) = \prod_{i=1}^g G_d^{r,\alpha^i,\beta^i}(E_i,p_i,q_i) \subseteq \prod \Grd(E_i).$$
\end{defn}

Note that the condition for $\bs \alpha$ to be a valid sequence, namely $\alpha^{i+1}-\alpha^i \ge \ul 1 - e_{a(i)} $ for each $i$, is equivalent to
 $\alpha^{i}+\beta^i \le \ul d-\ul r- \ul 1 + e_{a(i)} $ for each $i$. So, from Lemma \ref{lem:is_nonempty}, we have

\begin{cor}\label{l:nevalid} With the notations in definition \ref{def:C(alpha)}, the scheme $C(\bs \alpha)$ is nonempty
 if and only if $\bs \alpha$ is a valid sequence for the data $g,r,d,\alpha,\beta$.
\end{cor}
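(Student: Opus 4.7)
The plan is a straightforward translation of Lemma \ref{lem:is_nonempty} from the language of ramification profiles to the language of valid sequences. Since the elliptic components $E_i$ of the chain are distinct curves, the scheme
\[
C(\bs \alpha) = \prod_{i=1}^g G_d^{r,\alpha^i,\beta^i}(E_i,p_i,q_i)
\]
is a genuine direct product (not a fiber product), so it is nonempty if and only if each of the $g$ factors $G_d^{r,\alpha^i,\beta^i}(E_i,p_i,q_i)$ is individually nonempty.

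Next, I would apply Lemma \ref{lem:is_nonempty} to each factor. That lemma says $G_d^{r,\alpha^i,\beta^i}(E_i,p_i,q_i)$ is nonempty precisely when there exists $t_i \in \{0,\ldots,r\}$ with $\alpha^i + \beta^i \le \ul d - \ul r - \ul 1 + e_{t_i}$. Substituting the complementary-sequence relation $\beta^i = \ul d - \ul r - \alpha^{i+1}$ from Definition~\ref{def:C(alpha)} and simplifying, this rearranges to
\[
\alpha^{i+1} - \alpha^i \;\ge\; \ul 1 - e_{t_i},
\]
which is exactly condition (iii) of Definition~\ref{d:validseq} at index $i$.

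Finally, I would observe that the two remaining conditions (i) and (ii) defining a valid sequence, namely $\alpha^1 = \alpha$ and $\alpha^{g+1} = \ul d - \ul r - \beta$, are already baked into the setup of Definition~\ref{def:C(alpha)}: the first is assumed directly, and the second is equivalent to the hypothesis $\beta^g = \beta$ via the formula $\beta^g = \ul d - \ul r - \alpha^{g+1}$. Collecting the componentwise nonemptiness criteria across $i=1,\ldots,g$ together with the boundary conditions reproduces exactly the three requirements of a valid sequence, giving both implications simultaneously. There is no substantive obstacle; the only point requiring care is tracking the index shift between $\alpha^i, \alpha^{i+1}$ and $\beta^i$ correctly when substituting the complementary-sequence relation.
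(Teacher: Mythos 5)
Your proposal is correct and is essentially identical to the paper's argument: the paper simply notes that, via the complementary-sequence relation $\beta^i = \ul d - \ul r - \alpha^{i+1}$, condition (iii) of Definition~\ref{d:validseq} is equivalent to the factorwise nonemptiness criterion $\alpha^i + \beta^i \le \ul d - \ul r - \ul 1 + e_{a(i)}$ of Lemma~\ref{lem:is_nonempty}, with conditions (i) and (ii) built into Definition~\ref{def:C(alpha)}. Your extra remark that the product is nonempty iff each factor is, and your bookkeeping of the index shift, match the intended reasoning exactly.
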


We thus obtain the following description of $\Grdab(X,p,q)$, valid for all values of $\rho(g,r,d,\alpha,\beta)$.

\begin{cor} \label{c:union_of_c}
If $(X,p,q)$ is a generic twice-pointed chain of elliptic curve of genus $g$, the Eisenbud-Harris scheme $\Grdab(X,p,q)$ as in Definition~\ref{def:eh-structure} is  
$$\bigcup_{\bs \alpha \in \VS(g,r,d,\alpha,\beta)} \!\! C(\bs \alpha),$$
Furthermore, the refined limit linear series are precisely those points which lie in only \textit{one} of the loci $C({\bs \alpha})$.
\end{cor}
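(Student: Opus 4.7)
The plan is to prove both assertions by setting up a direct bijection between the Eisenbud--Harris index set of Definition~\ref{def:eh-structure} and the valid-sequence index set of Definition~\ref{def:C(alpha)}, and then analyzing refinedness node by node.

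For the equality of schemes, I would begin by observing that the compatibility condition $\beta^i_t + \alpha^{i+1}_t = d - r$ in Definition~\ref{def:eh-structure} is precisely the relation $\beta^i = \ul d - \ul r - \alpha^{i+1}$ used to define $C(\bs\alpha)$ in Definition~\ref{def:C(alpha)}. After setting $\alpha^{g+1} := \ul d - \ul r - \beta^g$, an Eisenbud--Harris indexing tuple $(\alpha^1,\beta^1,\ldots,\alpha^g,\beta^g)$ is therefore the same data as a sequence $\bs\alpha = (\alpha^1,\ldots,\alpha^{g+1})$ satisfying the boundary conditions $\alpha^1 = \alpha$ and $\alpha^{g+1} = \ul d - \ul r - \beta$, and under this dictionary the two products agree as subschemes of $\prod \Grd(E_i)$. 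The only difference between the two unions is that valid sequences impose the additional per-step condition $\alpha^{i+1} - \alpha^i \ge \ul 1 - e_a$ for some $a$. Rewriting this as $\alpha^i + \beta^i \le \ul d - \ul r - \ul 1 + e_a$ and invoking Lemma~\ref{lem:is_nonempty} identifies it as precisely the nonemptiness criterion for the factor $G^{r,\alpha^i,\beta^i}_d(E_i,p_i,q_i)$; by Corollary~\ref{l:nevalid} this is equivalent to $C(\bs\alpha)$ being nonempty. So the non-valid tuples in the Eisenbud--Harris union index empty schemes, yielding the first claim.

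For the refinedness statement, I would argue at a single node $(q_i,p_{i+1})$ and pass to the full curve at the end. Given a limit linear series $\{(L_i,V_i)\}$ with actual vanishing orders encoded by ramification sequences $\beta^{\mathrm{act},i}$ at $q_i$ and $\alpha^{\mathrm{act},i+1}$ at $p_{i+1}$, the series lies in $C(\bs\alpha)$ iff $\beta^i \le \beta^{\mathrm{act},i}$ and $\alpha^{i+1} \le \alpha^{\mathrm{act},i+1}$ componentwise at every node. Together with the refined compatibility $\beta^i_t + \alpha^{i+1}_t = d - r$ and the limit linear series inequality $\beta^{\mathrm{act},i}_t + \alpha^{\mathrm{act},i+1}_t \ge d - r$, a refined node forces the unique choice $\beta^i = \beta^{\mathrm{act},i}$, $\alpha^{i+1} = \alpha^{\mathrm{act},i+1}$. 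At a coarse node, some $t_0$ satisfies the strict inequality $\beta^{\mathrm{act},i}_{t_0} + \alpha^{\mathrm{act},i+1}_{t_0} > d - r$, and I can then exhibit at least two distinct admissible lifts, one with $\beta^i = \beta^{\mathrm{act},i}$ and another with $\alpha^{i+1} = \alpha^{\mathrm{act},i+1}$, filling in the complement via compatibility. These two choices differ in the $t_0$-coordinate exactly because the inequality above is strict.

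The only point requiring any care beyond bookkeeping is checking that the admissible choices at a coarse node respect the monotonicity of $\alpha^i$ and $\beta^i$; this comes for free from the fact that vanishing orders of a $g^r_d$ are strictly monotone in $t$, so the actual ramification sequences and their complements automatically have the correct weak monotonicity. Combining the single-node conclusion over all internal nodes yields: a limit linear series lies in exactly one $C(\bs\alpha)$ if and only if every node is refined, i.e.\ the series itself is refined.
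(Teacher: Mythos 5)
Your proposal is correct and follows essentially the same route as the paper: the scheme equality comes from identifying the Eisenbud--Harris indexing profiles with sequences $\bs\alpha$ and discarding the non-valid ones as indexing empty schemes via Lemma~\ref{lem:is_nonempty}/Corollary~\ref{l:nevalid}, and the refinedness claim comes from characterizing membership in $C(\bs\alpha)$ by the componentwise inequalities between prescribed and actual ramification at each node. Your node-by-node refined/coarse case analysis is just a more explicit version of the paper's one-line ``combining the inequalities'' argument, so there is nothing further to add.
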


\begin{proof}
Given a limit linear series $\cL$ with aspect $\cL_i$ on $E_i$, observe that $\cL \in C({\bs \alpha})$ for a valid sequence $\bs \alpha=(\alpha^1,\ldots,\alpha^{g+1})$ if and only if the following inequalities hold for all $i, j= 1,\ldots,g$:
$$
\underline{d}-\underline{r} - \beta^{i}(\cL_{i},q_{i}) \leq \alpha^{i+1} \quad\text{ and }\quad \alpha^{j} \leq  \alpha^{j}(\cL_{j},p_{j}).
$$ 
Setting $j=i+1$ and combining the inequalities shows that such a valid sequence $\bs \alpha$ can always be found, and that the choice of $\bs \alpha$ is unique if and only if $\cL$ is refined.
\end{proof}

We now focus on the case relevant to our present purpose: when $\rho(g,r,d,\alpha,\beta) = 1$. In this case, we can also give a specific classification of the geometry of all of the loci $C({\bs \alpha})$.

\begin{cor}\label{l:factors_of_c}
Let $\bs \alpha = (\alpha^1,\ldots,\alpha^{g+1})$ be a valid sequence for the data $g,r,d,\alpha,\beta$.
\begin{enumerate}
\item If $\rho(g,r,d,\alpha,\beta)=1$, then  $C(\bs \alpha) $  is isomorphic to the elliptic curve $E_i$ via the  projection to $\mbox{Pic}^d(E_i)\cong E_i$
 if the $i^{th}$ step of $\bs \alpha$ is a stall, and isomorphic to $\mathbb{P}^1$ if the $i^{th}$ step of $\bs \alpha$ is a swap. 
 (The projection to $\mbox{Pic}^d(E_i)$ in this case is a point.)
\item  If $\rho(g,r,d,\alpha,\beta)=0$, then  $C(\bs \alpha) $  is a single reduced point. 
\end{enumerate}
\end{cor}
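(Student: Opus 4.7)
The plan is to analyze each factor $G^{r,\alpha^i,\beta^i}_d(E_i,p_i,q_i)$ in the product defining $C(\bs\alpha)$ individually and then recombine. By Corollary~\ref{l:nevalid}, each such factor is nonempty since $\bs\alpha$ is valid, so Proposition~\ref{oneCurveV2} will apply once I have determined the adjusted Brill--Noether number $\rho_i := \rho(1,r,d,\alpha^i,\beta^i)$ at each step.

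First I would compute $\rho_i$ in terms of the step data. Using $\beta^i = \ul{d}-\ul{r}-\alpha^{i+1}$, one finds $|\alpha^i|+|\beta^i| = (r+1)(d-r) - |\alpha^{i+1}-\alpha^i|$, and a short rearrangement of the definition of $\rho$ yields
\[
\rho_i \;=\; |\alpha^{i+1}-\alpha^i|-r,
\]
which is precisely the progress of the $i$-th step. By Corollary~\ref{c:classification}, this equals $0$ for a plodding step and $1$ for either a stalling or a swapping step; and by Lemma~\ref{l:total_progress} the total progress equals $\rho(g,r,d,\alpha,\beta)$, so in case (1) exactly one step has progress $1$ and in case (2) every step has progress $0$.

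Next I would read off each factor from Proposition~\ref{oneCurveV2}. For a plodding step the factor is a reduced point by part (2) of that proposition. For a stalling step, $\alpha^{i+1}-\alpha^i = \ul{1}$ gives $\alpha^i_j + \beta^i_j = d-r-1$ for every $j$; this is case (b) of Proposition~\ref{oneCurveV2}(1), so the factor is isomorphic to $E_i$ via the projection to $\Pic^d(E_i)$. For a swapping step, $\alpha^{i+1}-\alpha^i = \ul{1}+e_a-e_b$ with $a\ne b$ gives $\alpha^i_b+\beta^i_b = d-r$ at the single index $b$, which is case (a), so the factor is a $\PP^1$ whose image in $\Pic^d(E_i)$ is the single reduced point $[\cO_{E_i}(a_b p_i+b_b q_i)]$.

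Finally, since the product of a scheme $Y$ with a collection of reduced points is $Y$, the unique non-plodding factor determines $C(\bs\alpha)$ in part (1): elliptic or rational according to whether the special step is a stall or a swap, with the asserted behavior of the projection to $\Pic^d(E_i)$ inherited from Proposition~\ref{oneCurveV2}. In part (2) every step is a plod, so $C(\bs\alpha)$ is a single reduced point. No serious obstacle is expected: the argument is essentially bookkeeping, with all the geometric content already absorbed into Proposition~\ref{oneCurveV2} and the combinatorial content into the step classification from Corollary~\ref{c:classification} and Lemma~\ref{l:total_progress}.
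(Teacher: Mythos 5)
Your proposal is correct and follows essentially the same route as the paper: the paper also decomposes $C(\bs\alpha)$ into its factors, notes that the per-component numbers $\rho_i=\rho(1,r,d,\alpha^i,\beta^i)$ are all $0$ except for a single $1$ at the stall/swap step (via additivity of $\rho$, which your identity $\rho_i=|\alpha^{i+1}-\alpha^i|-r$ makes explicit), and then invokes Proposition~\ref{oneCurveV2}. Your matching of stalls to case (b) and swaps to case (a) of that proposition is exactly the bookkeeping the paper leaves implicit, so there is nothing to add.
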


\begin{proof}Let $(\beta^0,\ldots,\beta^g)$ be the complementary sequence to $\bs \alpha$.
The Brill-Noether number for refined limit linear series is additive over the components of the curve.   If $\rho(g,r,d,\alpha,\beta) = 1$, then the Brill-Noether numbers $$\rho_1 = \rho(1,r,d,\alpha^1,\beta^1),\,\,\ldots,\,\,\rho_g = \rho(1,r,d,\alpha^g,\beta^g)$$ for the components are all zero except for one of them, say $\rho_i$, which is one.  Then the result follows from Proposition~\ref{oneCurveV2}.
 
 If $\rho(g,r,d,\alpha,\beta) = 0$, each of the Brill-Noether numbers $\rho_1,\ldots,\rho_g$ for the individual components $E_i$ are all zero and part (2) of Proposition~\ref{oneCurveV2} applies.
\end{proof}

\begin{lemma}\label{l:refined_and_coarse} Fix $g,r,d,\alpha,$ and $\beta$ such that $\rho(g,r,d,\alpha,\beta) = 1$. Let $(X,p,q)$ be a generic twice-pointed elliptic chain of genus $g$.
\begin{enumerate}
\label{it:refined_unique}
\item If $\mathcal{L} \in \Grdab(X,p,q)$ is a coarse limit linear series. 
Then there exist exactly two valid sequences $\bs \alpha$ and $\bs \alpha'$ such that $\mathcal{L} \in C(\bs \alpha)\cap C(\bs \alpha')$. 
 Furthermore $\bs \alpha$ and $\bs \alpha'$ are off by 1.
\label{it:coarse}
\item %Still assuming $\rho=1$, 
Suppose that $\bs \alpha$ and $\bs \alpha'$ are valid sequences that are off by 1. 
 Then there exists a unique coarse limit linear series $\mathcal{L} \in C(\bs \alpha)\cap C(\bs \alpha')$.
 \item If $C(\bs \alpha)$ is rational, then it intersects one other component if it projects nontrivially (to more than a point) to $\Grd(E_i)$ for $i= 1$ or $i=g$, and two other components if it projects nontrivially to $\Grd(E_i)$ for $i\in\{2,\ldots,g-1\}$.
\label{it:coarse_converse}
\end{enumerate}
\end{lemma}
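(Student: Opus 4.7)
The plan is to combine the geometric classification of $C(\bs\alpha)$ from Corollary~\ref{l:factors_of_c} with a slack analysis at the nodes of $X$. For a limit linear series $\cL$ with aspects $\cL_i = (L_i, V_i)$, write $s_{i,t} = a_t(\cL_{i+1},p_{i+1}) + b_t(\cL_i,q_i) - d\ge 0$ for the slack at the $i^\text{th}$ node at index $t$. Membership $\cL\in C(\bs\alpha)$ is equivalent to $a_t^\text{req} := \alpha^{i+1}_t+t$ lying in the range $[a_t(\cL_{i+1},p_{i+1})-s_{i,t},\; a_t(\cL_{i+1},p_{i+1})]$ for every $i$ and $t$. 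Writing $\rho_i^{\bs\alpha}$ for the adjusted Brill-Noether number on $E_i$ with ramification profile $(\alpha^i,\beta^i)$, the additivity identity $\sum_i \rho_i^{\bs\alpha} = \rho(g,r,d,\alpha,\beta) = 1$, together with $\rho_i^{\bs\alpha}\ge 0$ forced by nonemptiness of each factor of $C(\bs\alpha)$, implies exactly one $\rho_i^{\bs\alpha}$ equals $1$ and the rest are $0$.

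For part (1), fix a coarse $\cL$ and consider the valid sequences $\bs\alpha$ with $\cL\in C(\bs\alpha)$. Varying some $\alpha^{i+1}_t$ by $k$ shifts $k$ units of $\rho$ between $E_i$ and $E_{i+1}$, so the constraint $\rho_i\ge 0$ forces $|k|\le 1$; moreover, two independent variations would push two components to $\rho\ge 1$, contradicting $\sum \rho_i = 1$. Since $\cL$ is coarse, at least one $s_{i,t}\ge 1$, so a unique variation is available at exactly one (node, index) pair. This yields precisely two valid sequences containing $\cL$, differing by $\pm e_a$ in a single entry, i.e.~off by $1$.

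For part (2), suppose $\bs\alpha,\bs\alpha'$ are off by $1$ with $\alpha'^j = \alpha^j + e_a$ (the other sign is symmetric). The intersection $C(\bs\alpha)\cap C(\bs\alpha')$ imposes the stricter ramification $(\alpha^j+e_a,\beta^j)$ on $E_j$, while all other components retain their original profiles. A direct calculation shows that the effective $\rho$ on $E_j$ drops from $1$ to $0$, leaving every component with $\rho=0$. Applying Proposition~\ref{oneCurveV2}(2) to each factor then produces a single reduced point, and the product is the unique coarse limit linear series in $C(\bs\alpha)\cap C(\bs\alpha')$; coarseness follows from the strict inequality in the compatibility condition at the node $q_{j-1}=p_j$ at index $a$.

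For part (3), parts (1)--(2) identify the components $C(\bs\alpha')$ meeting $C(\bs\alpha)$ with the valid sequences off by $1$ from $\bs\alpha$. A rational $C(\bs\alpha)$ corresponds to a swap step $\ul 1 + e_a - e_b$ at some position $i$, and $C(\bs\alpha)$ projects nontrivially only to $\Grd(E_i)$. Case analysis of the validity conditions (each step $\geq \ul 1 - e_c$ for some $c$, with total progress equal to $1$) shows that the only off-by-$1$ modifications of $\bs\alpha$ are $+e_a$ at position $i$ (moving the swap to step $i-1$) and $-e_a$ at position $i+1$ (moving the swap to step $i+1$), each uniquely determined by the swap direction. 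The first requires $i\ge 2$ since $\alpha^1 = \alpha$ is fixed; the second requires $i\le g-1$ since $\alpha^{g+1} = \ul d - \ul r - \beta$ is fixed. This gives the stated counts. The main obstacle will be part (1), where one must carefully rule out multiple independent slack variations by leveraging both the sum constraint $\sum \rho_i = 1$ and the nonnegativity $\rho_i\ge 0$; parts (2) and (3) then reduce to direct computations using the previously established structure.
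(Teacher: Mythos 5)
Your proposal is correct and rests on the same pillars as the paper's own proof --- Proposition~\ref{oneCurveV2}, the membership criterion of Corollary~\ref{c:union_of_c}, and additivity of $\rho$ over the components of the chain --- but it is organized somewhat differently. For part (1) the paper first classifies $C(\bs \alpha)$ (the aspects are pinned on all but one component, by Corollary~\ref{c:classification} and Proposition~\ref{oneCurveV2}), deduces that a coarse series can exceed the gluing condition at only one node, in one index, and only by one, and then writes the two sequences down from the actual ramification data of $\cL$; you instead run a slack-bookkeeping argument. Your phrase ``two independent variations would push two components to $\rho\ge 1$'' is looser than it should be: the precise mechanism is that summing the actual adjusted Brill-Noether numbers of the aspects gives $1$ minus the total slack at the nodes minus any excess ramification at $p,q$, and each of these numbers is nonnegative because each aspect is a nonempty series on a generic twice-pointed elliptic curve (Lemma~\ref{lem:is_nonempty}); hence total slack is at most one, which is exactly what yields ``precisely two'' valid sequences, off by $1$. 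For part (2) your argument coincides with the paper's (impose the stricter profile, note every factor now has $\rho=0$, apply Proposition~\ref{oneCurveV2}(2)); you should only add the one-line check that the stricter profile on $E_j$ meets the nonemptiness hypothesis, which holds because $\alpha^j+e_a+\beta^j=\ul d-\ul r-\ul 1+e_b$ when step $j$ of $\bs \alpha$ is the swap $\ul 1+e_a-e_b$. For part (3) you enumerate the off-by-$1$ valid sequences combinatorially (only $+e_a$ at position $i$ and $-e_a$ at position $i+1$ survive the step conditions, subject to $i\ge 2$, respectively $i\le g-1$), whereas the paper counts the coarse limit linear series lying on the rational component (one if it lies over $E_1$ or $E_g$, two otherwise) and then invokes part (1); the two counts are equivalent via parts (1)--(2), and your combinatorial route is a valid, slightly more bookkeeping-heavy, alternative.
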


\begin{proof} We will use the classification in Proposition~\ref{oneCurveV2} repeatedly on the aspects of various limit linear series.  
 Suppose $\bs \alpha$ is a valid sequence.  From Corollary \ref{c:classification} and Proposition \ref{oneCurveV2},  the aspects of a linear series in  $C(\bs \alpha)$ are completely determined  
 on all but one of the elliptic components.
On that remaining component, say   $E_i$, one has  $\alpha^{i+1}-\alpha^{i} = \ul 1 + e_{a(i)}-e_{b(i)}$ or equivalently  
$\alpha^{i} +\beta^{i} = \ul d- \ul r- \ul 1 +e_{b(i)}-e_{a(i)}$, for indices $a(i),b(i)\in\{0,\ldots,r\}$.
If a series in  $ C(\bs \alpha)$ is not refined, then the strict inequality can only occur at either  $p_i$ or at $q_i$,
 where the ramification of  $\mathcal{L}$ must be increased either from the  given $\alpha^i$ at $p_i$ or $\beta^i = \ul d-\ul r-\alpha^{i+1}$ by $e_{a(i)}$. Furthermore,
 by Proposition~\ref{oneCurveV2}(2), there is only one such coarse series $\cL$ with these ramifications.
  If $a(i)\not= b(i)$, then $a(i)$ is uniquely determined, so the only choice is either $p_i$ or $q_i$ for $i\not=1,g$ and $q_1, p_g$ respectively for $i=1,g$. 
 The condition $a(i)\not= b(i)$ corresponds to  $C(\bs \alpha)$ being a $\mathbb{P}^1$.
 Therefore, a rational component of the Brill-Noether curve contains one coarse limit linear series if the rational component lies above $E_1$ or $E_g$ and two otherwise.  This will show (3), once we show (1).  This also shows that for a coarse series, the strict inequality for the gluing orders occurs at a single node and it is off by one.
 
Assume now  $\mathcal{L} $ is a coarse series and the strict inequality occurs at the node obtained from gluing $q_{i-1}$ and $p_i$ for the  index $a$.
 For $j=2,\ldots,g$, denote by $\hat\alpha^{j}$ the ramification orders of $\mathcal{L} $ at  $p_j,$ and for $j=1,\ldots,g-1$, denote by $\hat\beta^{j}$ the ramification orders of $\cL$ at  $q_j$.
 Our assumption is 
 $$\hat\beta^{j-1}=\begin{cases}\ul d-\ul r-\alpha^{j}&  j\not=i, \\ \hat\beta^{j-1}=\ul d-\ul r-\alpha^{j}+e_a, & j=i.\end{cases}$$
Then $\mathcal{L} $ belongs to exactly two components  $C(\bs \alpha), C(\bs \alpha ')$ where $\bs \alpha$ is given by $\alpha^j=\hat\alpha^j$ for all $j$,  while $\bs \alpha'$ is given by 
$\alpha^{'j}=\hat\alpha^j$ if $j\not= i$ and $\alpha^{'i}=\hat\alpha^i-e_a$. By construction, $\bs \alpha, \bs \alpha '$ are off by one. This proves (1).

Conversely, assume that $\bs \alpha$ and $\bs \alpha'$ are valid sequences that are off by 1. Let $k\in\{1,\ldots,g\}$ be the index such that $\alpha^i=\alpha ^{'k}-e_a$, and $\alpha^j=\alpha ^{'j}$ for all $j\not=k$.  Write $\bs \beta'$ for the complementary sequence to $\bs \alpha'$.  Then a limit linear series lies in $C(\bs \alpha)\cap C(\bs\alpha')$ if it has ramification at least $\bs \alpha$ at the points $p_i$ and $\bs \beta'$ at the points $q_i$.
As $\rho=1$ and both $\bs \alpha, \bs \alpha '$ are valid sequences, Corollary  \ref{c:classification}
implies that $\bs \alpha$ has a stall or swap in the $(k-1)^\text{st}$ step, $\bs \alpha'$ has a stall or swap in the $k^\text{th}$ step, and for all $j=1,\ldots,g$, we have
$$\alpha^{j}+{\beta'}^{j} = \begin{cases}
\ul d-\ul r- \ul 1 + e_{a} &\text{if }j=  k,\\
\ul d-\ul r- \ul 1 + e_{b(j)} &\text{if }j\ne k,\end{cases}$$ 
for indices $b(j)$ depending on $j$. 
Then, from Proposition \ref{oneCurveV2}, there exists a unique limit linear series with ramifications $ \bs \alpha '$ at the points $p_i$ and  $\bs \beta$ at the points $q_i$, and by Corollary~\ref{c:union_of_c} it is necessarily coarse. This proves (2).
\end{proof}

\begin{thm}\label{t:structure_of_bn_curve}
Fix $g,r,d,\alpha,\beta$ with $\rho(g,r,d,\alpha,\beta) = 1$.  Let $\sigma = \sigma(g,r,d,\alpha,\beta)$ be the skew shape as in Definition \ref{def:skewtab}.

The scheme $\Grdab(X,p,q)$ is a reduced nodal curve 
whose dual graph is $BN'(\sigma)$.  Moreover,
\begin{itemize}
\item A vertex of $BN'(\sigma)$ that is an aSYT missing the number $i$, say, corresponds to a component $C(\bs \alpha)$ which is isomorphic as a scheme to the elliptic curve $E_i$.
\item The remaining vertices of $BN'(\sigma)$ correspond to components $C(\bs \alpha)$ that are isomorphic as a scheme to $\mathbb{P}^1$.
\end{itemize}
\end{thm}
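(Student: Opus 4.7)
The plan is to assemble this theorem from the pieces already in place. By Corollary~\ref{c:union_of_c} we have the set-theoretic equality
$$\Grdab(X,p,q) = \bigcup_{\bs\alpha \in \VS(g,r,d,\alpha,\beta)} C(\bs\alpha),$$
and by Corollary~\ref{cor:last1} the set $\VS(g,r,d,\alpha,\beta)$ is in bijection with the vertex set of $BN'(\sigma)$. This identifies the index set of the components of $\Grdab(X,p,q)$ with the vertices of the proposed dual graph.

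Next I would read off the scheme structure of each component from Corollary~\ref{l:factors_of_c}(1): when $\bs\alpha$ has its unique non-plod step at position $s$, $C(\bs\alpha)$ is isomorphic to $E_s$ if that step is a stall and to $\mathbb{P}^1$ if it is a swap. Combined with Corollary~\ref{cor:last2}, which says that stalls at step $i$ correspond exactly to aSYT pontableaux missing $i$, this immediately yields the two bulleted claims of the theorem.

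For the edges, I would invoke Lemma~\ref{l:refined_and_coarse}. Parts (1) and (2) of that lemma together say that distinct components $C(\bs\alpha)$ and $C(\bs\alpha')$ meet if and only if $\bs\alpha$ and $\bs\alpha'$ are off by $1$, in which case they meet at a unique coarse limit linear series. By Corollary~\ref{cor:last1}, off-by-$1$ valid sequences correspond to adjacent pontableaux, so the incidence structure of the $C(\bs\alpha)$ matches the edge set of $BN'(\sigma)$.

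The remaining, and most delicate, point is that $\Grdab(X,p,q)$ is reduced with only nodes as singularities. Each component $C(\bs\alpha)$ is a product of $g$ factors inside $\prod_i \Grd(E_i)$ with exactly one non-trivial factor, living in $\Grd(E_s)$ where $s$ is the position of the non-plod step. A direct case analysis of the off-by-$1$ relation, splitting into whether $\bs\alpha$ has a stall or a swap and whether the modification occurs at index $s$ or $s+1$, shows that the non-plod position always shifts by exactly one between $\bs\alpha$ and $\bs\alpha'$. Therefore, at the unique common point, the tangent directions of $C(\bs\alpha)$ and $C(\bs\alpha')$ lie in different factors of the ambient product and are linearly independent, so the intersection is transverse. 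Since each component is smooth and the intersections are transverse, the union is reduced with nodal singularities and dual graph $BN'(\sigma)$, as claimed. The main technical obstacle is precisely the combinatorial verification that off-by-$1$ always shifts the non-plod step by one position, as everything else follows once this tangent-space transversality is in hand.
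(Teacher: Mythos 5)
Your proposal is correct and follows essentially the same route as the paper: the paper's proof is precisely the assembly of Corollaries~\ref{cor:last1}, \ref{cor:last2}, \ref{c:union_of_c}, \ref{l:factors_of_c}(1) and Lemma~\ref{l:refined_and_coarse}(1),(2), with nodality deduced from the fact that two meeting components vary in different (adjacent) factors of $\prod_i \Grd(E_i)$. The only cosmetic difference is that the shift-by-one of the non-plod step, which you propose to verify by a direct case analysis, is in the paper extracted from the proof of Lemma~\ref{l:refined_and_coarse}(2) (via Corollary~\ref{c:classification}), so no new argument is needed there.
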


\begin{proof}
This follows by combining Corollaries~\ref{cor:last1},~\ref{cor:last2}, \ref{c:union_of_c}, \ref{l:factors_of_c}(1), and Lemma~\ref{l:refined_and_coarse}(1) and (2). The only thing we need to verify is that the components intersect nodally.  But this follows by observing that whenever two components $C(\bs \alpha)$ and $C(\bs \alpha')$ meet, they each vary in only one of the $g$ factors of $\prod G^r_d(E_i)$, and these two factors are different.  Indeed, it was shown in the proof of Lemma~\ref{l:refined_and_coarse}(2) that if $\bs \alpha$ and $\bs \alpha'$ differ in index $k$, then $C(\bs \alpha)$ varies only in the $(k-1)^\text{st}$ factor while $C(\bs \alpha')$ varies only in the $k^\text{th}$ factor.
\end{proof}

\begin{cor}\label{WrdYT}   The projection $W^{r,\alpha,\beta}_d (X,p,q)$ to the Jacobian is a nodal curve with all components elliptic, whose dual graph is the Brill-Noether graph $BN(\sigma(g,r,d,\alpha,\beta))$ in Definition~\ref{d:bngraph}.  A vertex in $BN(\sigma)$ corresponding to a tableau missing the entry $i$ corresponds to a component isomorphic to $E_i$.
\end{cor}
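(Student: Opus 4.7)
The plan is to deduce this corollary from Theorem~\ref{t:structure_of_bn_curve} by analyzing the forgetful projection $\pi\colon \Grdab(X,p,q) \to \Pic^d(X) = \prod_{i=1}^g \Pic^d(E_i)$ that sends a limit linear series to its underlying line bundle; the scheme $W^{r,\alpha,\beta}_d(X,p,q)$ is the image of this map. First I would compute the image $\pi(C(\bs\alpha))$ for every valid sequence $\bs\alpha$, using Corollary~\ref{l:factors_of_c}: if $\bs\alpha$ has a stall at position $i$, then $C(\bs\alpha) \cong E_i$ maps isomorphically onto a subvariety of $\Pic^d(X)$ of the form $\{L_1\}\times\cdots\times \Pic^d(E_i)\times\cdots\times \{L_g\}$, with the $L_j$ determined by the plodding steps of $\bs\alpha$; if $\bs\alpha$ has a swap at position $i$, then $C(\bs\alpha) \cong \mathbb{P}^1$ and its projection to $\Pic^d(E_i)$, and hence to $\Pic^d(X)$, is a single point. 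Thus elliptic components map isomorphically onto embedded elliptic curves, while rational components collapse to points.

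Next I would analyze how the combinatorics collapses. Theorem~\ref{t:structure_of_bn_curve} identifies the dual graph of $\Grdab(X,p,q)$ with $BN'(\sigma)$, and Proposition~\ref{p:structure_of_bn'} expresses $BN'(\sigma)$ as $BN(\sigma)$ with every edge subdivided into a path of rational vertices and with dangling paths of rational vertices attached to aSYT vertices. For a subdivided edge between two aSYT vertices corresponding to elliptic components $E_i$ and $E_j$, the intermediate $\mathbb{P}^1$ components and the nodes connecting them all map to a single point of $\Pic^d(X)$, identifying the two nodes at which the chain attaches to $E_i$ and $E_j$; this produces exactly one transverse meeting of $E_i$ and $E_j$, which is a node because both curves are smooth. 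For a dangling path at an aSYT vertex corresponding to $E_i$, every vertex besides the aSYT itself is a rational component and hence collapses to a single point lying on $E_i$; since the path has no other aSYT endpoint, this contributes neither a new component nor a new node to $W^{r,\alpha,\beta}_d$.

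Putting these pieces together, $W^{r,\alpha,\beta}_d(X,p,q)$ is a union of one elliptic curve isomorphic to $E_i$ for each aSYT of $\sigma$ missing the entry $i$, meeting transversely according to the edges of $BN(\sigma)$. Two aSYTs differing in distinct swap/stall positions give rise to subvarieties of $\Pic^d(X)$ lying in genuinely different coordinate slices of $\prod_j \Pic^d(E_j)$, so they define distinct embedded elliptic curves; a separate check that aSYTs missing the same entry $i$ produce genuinely distinct copies of $E_i$ uses the genericity of $(X,p,q)$ to ensure that the line bundles $L_j$ fixed by their plodding steps differ in some position $j\ne i$. Reducedness of the image follows from the reducedness of $\Grdab(X,p,q)$ established in Theorem~\ref{t:structure_of_bn_curve} together with the properness of $\pi$.

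The main obstacle I anticipate is rigorously matching the scheme structure: verifying that the reduced subscheme of $\Pic^d(X)$ obtained as $\pi(\Grdab(X,p,q))$ coincides with the scheme $W^{r,\alpha,\beta}_d(X,p,q)$ given by the standard determinantal construction on the compact-type curve $X$. Modulo this point, the corollary reduces cleanly to the contraction of rational components inside $BN'(\sigma)$, which exactly recovers $BN(\sigma)$.
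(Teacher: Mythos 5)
Your proposal follows essentially the same route as the paper: the rational components $C(\bs\alpha)$ project to points in the Jacobian while the elliptic ones map isomorphically (Corollary~\ref{l:factors_of_c}), so the image is obtained from $\Grdab(X,p,q)$ by contracting rational components, and the contraction of $BN'(\sigma)$ along its paths of rational vertices recovers $BN(\sigma)$ via Theorem~\ref{t:structure_of_bn_curve} and Proposition~\ref{p:structure_of_bn'}. The scheme-theoretic obstacle you flag at the end does not arise in the paper, since $W^{r,\alpha,\beta}_d(X,p,q)$ is simply taken to be the image of $\Grdab(X,p,q)$ in the Jacobian rather than an independently defined determinantal scheme.
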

\begin{proof}The image $W^{r,\alpha,\beta}_d (X,p,q)$ of  $G^{r,\alpha,\beta}_d (X,p,q)$  in the Jacobian 
is obtained by contracting the rational components, by Corollary \ref{l:factors_of_c}. Now the result follows from Theorem \ref{t:structure_of_bn_curve}.
\end{proof}

We conclude this section by observing that the proof of Theorem \ref{t:structure_of_bn_curve} applies, with very slight modifications, to give the following description in the case $\rho(g,r,d,\alpha,\beta) = 0$. Note that the number of limit linear series on a generic curve with given ramification at two given points was previously computed by Tarasca (see \cite[Section 3.1]{t}).

\begin{prop}\label{p:rho0}
Let $g,d,r,\alpha,\beta$, and $\Grdab(X,p_1,q_g)$ be as above, but now assume that $\rho(g,d,r,\alpha,\beta) = 0$. Then $\Grdab(X,p_1,q_g)$ consists of a collection of reduced points, each given by a refined limit linear series. These points correspond bijectively to the standard fillings of the skew shape $\sigma(g,d,r,\alpha,\beta)$.
\end{prop}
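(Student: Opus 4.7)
The plan is to combine the global structure results of Section~\ref{sec:prelim} with a combinatorial identification of $\VS(g,r,d,\alpha,\beta)$ with standard fillings of $\sigma$ in the $\rho=0$ case. The key preliminary observation is that when $\rho(g,r,d,\alpha,\beta)=0$, Lemma~\ref{l:total_progress} forces every step of every valid sequence $\bs\alpha$ to have zero progress, hence be a plodding step of the form $\alpha^{i+1}-\alpha^i = \ul 1-e_{a_i}$ for some $a_i\in\{0,\ldots,r\}$.

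First, I would invoke Corollary~\ref{c:union_of_c} to express $\Grdab(X,p,q)=\bigcup_{\bs\alpha\in\VS(g,r,d,\alpha,\beta)} C(\bs\alpha)$, and then apply Corollary~\ref{l:factors_of_c}(2) to conclude that each $C(\bs\alpha)$ is a single reduced point. A short calculation expanding $\rho(1,r,d,\alpha^i,\beta^i)$ using $\beta^i = \ul d-\ul r-\alpha^{i+1}$ shows that for a plodding step one has $\rho(1,r,d,\alpha^i,\beta^i) = 0$ on each elliptic component $E_i$. The case analysis in the proof of Proposition~\ref{oneCurveV2}(2) then guarantees that the unique point of $G^{r,\alpha^i,\beta^i}_d(E_i,p_i,q_i)$ has vanishing orders \emph{exactly} $\alpha^i_t+t$ at $p_i$ and $\beta^i_t+r-t$ at $q_i$, not merely at least these values. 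Hence each such limit linear series is refined at every node, and the final statement of Corollary~\ref{c:union_of_c} then shows that distinct valid sequences correspond to distinct points: the union is in fact a disjoint union of $|\VS(g,r,d,\alpha,\beta)|$ reduced points.

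It remains to construct a bijection $\VS(g,r,d,\alpha,\beta) \leftrightarrow SYT(\sigma)$. Since $\rho=0$ forces $|\sigma|=g$, both sides involve the same set of labels $\{1,\ldots,g\}$. The map sends a valid sequence, encoded by the indices $(a_1,\ldots,a_g)$ determining its plodding steps, to the filling of $\sigma$ obtained by placing label $i$ in the leftmost empty box of row $a_i$, exactly as in Definition~\ref{d:pretableau-to-sequence} and the construction of Lemma~\ref{l:pont-valid}(2), specialized to the situation with no swapping or stalling steps. The nondecreasing condition on each $\alpha^i$ translates precisely to the condition that at every intermediate step the filled region is a valid skew subdiagram, which forces columns of the final filling to be strictly increasing; the row condition is automatic since each row is filled from left to right as labels increase. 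The main obstacle I anticipate is the second paragraph: verifying that ramifications at each node are \emph{exact} rather than merely lower bounds, so that every series in $\Grdab(X,p,q)$ is refined. This will require tracing through the fibered decomposition provided by Lemma~\ref{splitLemma} together with the explicit description of $G^{0,(a),(b)}_d(E,p,q)\cong \textrm{Sym}^{d-a-b}(E)$ from Lemma~\ref{l:r0}, which together pin down the unique linear series on each elliptic component precisely.
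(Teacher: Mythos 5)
Your proposal is correct and follows essentially the same route as the paper's proof, which likewise deduces the scheme structure from Corollaries~\ref{c:union_of_c} and~\ref{l:factors_of_c} and obtains the bijection with $SYT(\sigma)$ by specializing the construction of Lemma~\ref{l:pont-valid} to the case with no swaps or stalls. The extra care you take about refinedness (exactness of the vanishing orders at the nodes, which follows from the genericity assumption that $p_i-q_i$ is non-torsion, as in the proofs of Lemma~\ref{lem:is_nonempty} and Proposition~\ref{oneCurveV2}) is a detail the paper leaves implicit, and it does go through as you anticipate.
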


\begin{proof}
Corollaries \ref{c:union_of_c} and \ref{l:factors_of_c} show that, as a scheme, $\Grdab(X,p,q)$ is a finite set of points in bijection with the valid sequences ${\bs \alpha}$ corresponding to $\alpha,\beta$. When $\rho = 0$, the argument of Lemma \ref{l:pont-valid} can be adapted to show that these valid sequences are in bijection with the standard Young tableaux on $\sigma(g,r,d,\alpha,\beta)$; the only difference is that there are no swaps or stalls at all.
\end{proof}

%MAIN THEOREMS
%MAIN THEOREMS
\section{Proofs of the main theorems}\label{sec:final}
We now combine our results to describe the scheme of Eisenbud-Harris limit linear series with specified ramification, in the case where the adjusted Brill-Noether number 
$$g - (r+1)(g-d+r) - |\alpha| - |\beta|$$ is equal to $1$. 

\begin{prop}\label{t:mainchain}
Fix $g,r,$ and $d$, and let $\alpha = (\alpha_0,\ldots, \alpha_r)$ be nondecreasing  and $\beta = (\beta_0,\ldots,\beta_r)$ be nonincreasing sequence of non-negative integers. 
 Let $\sig = \sig(g,r,d,\al,\be)$ be the skew shape defined by $(g,r,d,\al,\be)$ as in Definition \ref{def:skewtab}.
Suppose that the adjusted Brill Noether number is
$$\rho(g,r,d,\al,\be) = g-(r+1)(g-d+r)-|\al|-|\be| = 1.$$
Then for a twice-marked elliptic chain of genus $g$ that is generic in the sense of Definition \ref{def:chain}, the scheme $\Grdab(X,p,q)$ is a reduced nodal curve of arithmetic genus 
$$1+(r\!+\!1)(n\!+\!1)f^\sigma + \sum_{i=1}^{r+1} (r\!+\!1\!-\!i)\!\cdot\! f^{{}^i\sigma} - \sum_{i=1}^{r+1} (r\!+2\!-\!i)\!\cdot\! f^{\sigma^i}.$$
\end{prop}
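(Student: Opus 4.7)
The plan is to apply Theorem~\ref{t:structure_of_bn_curve} to identify $\Grdab(X,p,q)$ as a reduced nodal curve whose dual graph is $BN'(\sigma)$, and then compute its arithmetic genus by a direct bookkeeping argument using the combinatorics of the augmented Brill--Noether graph. Since all the geometric work has been done in Section~\ref{sec:prelim}, this reduces to unwinding definitions.

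First, I would invoke the standard formula for the arithmetic genus of a reduced nodal curve $C$ with dual graph $\Gamma$,
$$p_a(C) = \sum_{v \in V(\Gamma)} g(\tilde{C}_v) + b_1(\Gamma),$$
where $\tilde{C}_v$ denotes the normalization of the component corresponding to vertex $v$. By Theorem~\ref{t:structure_of_bn_curve}, the components indexed by aSYT vertices are isomorphic to the elliptic curves $E_i$ (contributing $1$ to the geometric genus each), while the remaining components are $\mathbb{P}^1$ (contributing $0$). Lemma~\ref{l:numedges} counts the aSYT vertices as $(n+1)f^\sigma$, so the total geometric genus contribution is $(n+1)f^\sigma$.

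The crux is then to compute $b_1(BN'(\sigma)) = |E(BN'(\sigma))| - |V(BN'(\sigma))| + c$, where $c$ is the number of connected components. The key observation is that Proposition~\ref{p:structure_of_bn'} constructs $BN'(\sigma)$ from $BN(\sigma)$ by two operations: (i) subdividing each edge into a longer path, and (ii) attaching pendant paths at certain vertices. Both operations preserve the quantity $|E| - |V|$, because subdividing an edge into $m$ edges adds $m-1$ vertices and $m-1$ edges, while attaching a path of length $\ell$ adds $\ell$ vertices and $\ell$ edges. Consequently
$$|E(BN'(\sigma))| - |V(BN'(\sigma))| \;=\; |E(BN(\sigma))| - |V(BN(\sigma))|.$$
Plugging in the edge count from Theorem~\ref{t:masterformula} (applied with $k = r+1$ rows) and the vertex count $(n+1)f^\sigma$ from Lemma~\ref{l:numedges}, the telescoping produces exactly $(r\!+\!1)(n\!+\!1)f^\sigma + \sum_{i=1}^{r+1}(r\!+\!1\!-\!i)f^{{}^i\sigma} - \sum_{i=1}^{r+1}(r\!+\!2\!-\!i)f^{\sigma^i}$, and adding the genus contribution and the $+1$ from $c=1$ yields the stated formula.

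The main obstacle I anticipate is verifying that $BN'(\sigma)$ is connected so that $c = 1$. One route is purely combinatorial: check that $BN(\sigma)$ is connected by showing that any two aSYT are linked by a sequence of single-entry replacements, which then passes to $BN'(\sigma)$ via the subdivision/pendant-attachment description, since neither operation disconnects the graph. Alternatively, one can argue by semicontinuity from the Fulton--Lazarsfeld connectedness of $\Grd$ on a general smooth curve of genus $g$ when $\rho \geq 1$. With connectedness in hand, the reducedness supplied by Theorem~\ref{t:structure_of_bn_curve} together with the genus bookkeeping above completes the proof.
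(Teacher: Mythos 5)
Your argument is essentially the paper's: both identify $\Grdab(X,p,q)$ as a reduced nodal curve with dual graph $BN'(\sigma)$ via Theorem~\ref{t:structure_of_bn_curve}, and both reduce the genus computation to $1+|E(BN(\sigma))|$, evaluated by Theorem~\ref{t:masterformula} with $k=r+1$ and $|V(BN(\sigma))|=(n+1)f^\sigma$ from Lemma~\ref{l:numedges}. The only cosmetic difference is that the paper contracts the chains of $\PP^1$'s, so that the remaining components are elliptic and the dual graph becomes $BN(\sigma)$ (Corollary~\ref{WrdYT}), whereas you stay on $BN'(\sigma)$ and use Proposition~\ref{p:structure_of_bn'} to note that subdividing edges and attaching pendant paths leave $|E|-|V|$ unchanged; this is the same bookkeeping and is correct.

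The one point to fix is the connectedness issue you single out as the main obstacle: it is both unnecessary and, as written, the only unproven step in your argument. For any reduced nodal curve with components of geometric genera $g_v$ and dual graph $\Gamma$, the arithmetic genus, defined as $1-\chi(\mathcal{O})$, equals $\sum_v g_v + |E(\Gamma)| - |V(\Gamma)| + 1$ whether or not $\Gamma$ is connected (since $\chi(\mathcal{O})=\sum_v(1-g_v)-\#\{\text{nodes}\}$); this is exactly the computation the paper performs, and it requires no connectivity statement. If instead you insist on the formula $\sum_v g_v + b_1(\Gamma)$, then you owe a proof that $c=1$, and neither of your suggested routes is complete: the combinatorial connectivity of $BN(\sigma)$ is plausible but is proved nowhere in the paper and would need a genuine argument, while Fulton--Lazarsfeld connectedness concerns $\Grd$ on a smooth curve with no ramification conditions, so it does not directly apply to the twice-pointed, ramified scheme $\Grdab$, let alone transfer to the special fiber over the elliptic chain without a further semicontinuity argument. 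Simply compute $p_a$ as $1-\chi(\mathcal{O})$ and the issue disappears.
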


\begin{proof}
The scheme $G^{r, \alpha^i, \beta^i}_d(E_i,p_i,q_i)$ was shown to be a nodal curve whose dual graph is the augmented Brill-Noether graph $BN'(g,r,d,\alpha,\beta)$; that it is reduced is shown in Proposition~\ref{oneCurveV2}.  

The genus of $\Grdab(X,p,q)$ is unchanged by the operation of contracting chains of $\PP^1$s.  
The nodal curve thus obtained has elliptic components only.  Then, by Corollary \ref{WrdYT}
 its dual graph is precisely the Brill Noether graph $\BN(\sigma(g, r, d, \alpha, \beta))$ (see Definitions \ref{d:bngraph}, \ref{def:skewtab}). 

Combining these facts, the arithmetic genus of $\Grdab(X,p,q)$ is
\begin{eqnarray*}
&& |V(\BN(\sig))| + (|E(\BN(\sig))| - |V(\BN(\sig))| + 1)\\[.1cm]
&=& 1+|E(\BN(\sig))|\\[-.1cm]
&=& 1+(r\!+\!1)(n\!+\!1)f^\sigma + \sum_{i=1}^{r+1} (r\!+\!1\!-\!i)\!\cdot\! f^{{}^i\sigma} - \sum_{i=1}^{r+1} (r\!+2\!-\!i)\!\cdot\! f^{\sigma^i}.
\end{eqnarray*}
where the last equality follows directly from the computation of the number of edges of $\BN(\sig)$ in Theorem~\ref{t:masterformula}.
\end{proof}

\begin{thm}\label{t:main}
Fix $g,r,d,\alpha,\beta$ as in Theorem~\ref{t:mainchain}, so that $\rho(g,r,d,\al,\be)=1$.  Then for a general twice-pointed smooth curve $(X,p,q)$ of genus $g$, the scheme $\Grdab(X,p,q)$ is an at-worst-nodal curve of arithmetic genus 
\begin{equation}
1+(r\!+\!1)(n\!+\!1)f^\sigma + \sum_{i=1}^{r+1} (r\!+\!1\!-\!i)\!\cdot\! f^{{}^i\sigma} - \sum_{i=1}^{r+1} (r\!+2\!-\!i)\!\cdot\! f^{\sigma^i}.
\end{equation}
\end{thm}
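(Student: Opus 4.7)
The plan is to deduce this statement on smooth curves from Proposition~\ref{t:mainchain}, which is the analogous statement on a generic twice-marked elliptic chain, by a one-parameter degeneration argument. Concretely, I would choose a one-parameter smoothing $\pi \colon \mathcal{X} \to \mathrm{Spec}(R)$ over a DVR $R$ whose special fiber is the twice-marked generic elliptic chain $(X,p,q)$ of Section~\ref{sec:prelim} and whose generic fiber is a general smooth twice-pointed curve of genus $g$, with two sections of $\pi$ realizing the marked points.

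Over this family, I would apply Osserman's construction in \cite{osserman-functor} to obtain a proper scheme $\mathcal{G} \to \mathrm{Spec}(R)$ parametrizing his limit linear series with ramification at least $\alpha,\beta$ along the two sections. Osserman's result is that this family is flat and proper provided each fiber has dimension equal to the expected dimension $\rho=1$, which holds on the generic fiber by classical Brill-Noether theory and on the special fiber by the explicit description in Theorem~\ref{t:structure_of_bn_curve}. Then the comparison result of \cite{murrayosserman} identifies the scheme structure on the special fiber of $\mathcal{G}$ with the Eisenbud-Harris scheme structure on $\Grdab(X,p,q)$, provided a mild hypothesis is met; this hypothesis is satisfied here because the Eisenbud-Harris special fiber is reduced and of the expected dimension, as established in Proposition~\ref{t:mainchain}.

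Combining these facts, $\mathcal{G}$ is a flat proper family of curves whose special fiber is the reduced nodal curve described in Proposition~\ref{t:mainchain} and whose generic fiber is $\Grdab(X_\eta,p_\eta,q_\eta)$. Flatness preserves arithmetic genus, and a flat deformation of an at-worst-nodal curve remains at worst nodal on nearby fibers, yielding the claimed arithmetic genus formula.

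The main obstacle is verifying the hypotheses of the Murray-Osserman comparison, chiefly the reducedness of the Eisenbud-Harris special fiber; this is the nonstandard ingredient, and the excerpt signals that it is established via representability of the Brill-Noether functor rather than the usual Gieseker-Petri argument. Once reducedness is in hand, the remainder of the argument is a formal consequence of flat and proper deformation theory together with the combinatorial count of edges in $BN(\sigma)$ from Theorem~\ref{t:masterformula}, which was already assembled in the proof of Proposition~\ref{t:mainchain}.
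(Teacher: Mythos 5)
Your proposal follows essentially the same route as the paper's proof: degenerate to the generic twice-marked elliptic chain, invoke the flatness and properness of the family of limit linear series via \cite{osserman-functor} and the Murray--Osserman comparison of scheme structures (whose hypotheses --- expected dimension, density of refined series, and reducedness --- are supplied by Proposition~\ref{t:mainchain} and Section~\ref{sec:prelim}), and conclude by constancy of arithmetic genus in a flat proper family together with the openness of the at-worst-nodal condition. The only point worth flagging is the one the paper itself notes: the results of \cite{murrayosserman} must be mildly extended to cover prescribed ramification along the two marked sections, which your argument implicitly assumes.
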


The following enumerative result (previously found by Tarasca \cite{t} without explicit reference to skew tableaux) will follow from the same proof.

\begin{thm}\label{t:main3}
Fix $g,r,d,\alpha,\beta$, and assume that
$$\rho(g,r,d,\alpha,\beta) = 0.$$
Then for a general twice-pointed smooth curve $(X,p,q)$ of genus $g$, the scheme $G^{r,\alpha,\beta}_d(X,p,q)$ consists of $f^\sigma$ points.
\end{thm}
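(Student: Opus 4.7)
The plan is to deduce Theorem~\ref{t:main3} from Proposition~\ref{p:rho0} (which settles the elliptic-chain side) combined with a flat-family argument using Osserman's moduli space of limit linear series, exactly parallel to how Theorem~\ref{t:main} is deduced from Proposition~\ref{t:mainchain}.

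First I would choose a one-parameter smoothing $\pi\colon \mathcal{X}\to B$, with $B$ a smooth affine curve having a distinguished point $0\in B$, such that the special fiber $\mathcal{X}_0$ is a generic twice-marked elliptic chain of genus $g$ as in Definition~\ref{def:chain}, and the generic fiber is a general smooth twice-pointed curve of genus $g$. The two marked points extend to sections $P,Q\colon B\to\mathcal{X}$ specializing to $p_1$ and $q_g$ on $\mathcal{X}_0$.

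Next I would invoke Osserman's construction \cite{osserman-functor} to obtain a proper, flat scheme $\mathcal{G}\to B$ whose fibers parametrize limit linear series of rank $r$ and degree $d$ with ramification at least $\alpha$ along $P$ and at least $\beta$ along $Q$. Under the Murray–Osserman comparison theorem \cite{murrayosserman}, the Osserman scheme structure on each fiber coincides with the Eisenbud–Harris scheme structure on $\Grdab$ of that fiber; the hypotheses needed for this comparison are the very ones already verified (and relied upon) in the proof of Theorem~\ref{t:main}, so the same verification carries over.

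Once this machinery is in place the argument is short. By Proposition~\ref{p:rho0}, the special fiber $\mathcal{G}_0 = \Grdab(\mathcal{X}_0,p,q)$ consists of $f^\sigma$ reduced points. Since $\mathcal{G}\to B$ is proper and flat with $0$-dimensional special fiber, the total space is finite over $B$, and flatness forces every fiber of $\mathcal{G}\to B$ to have length exactly $f^\sigma$. Moreover, reducedness of $\mathcal{G}_0$ together with flatness implies that $\mathcal{G}\to B$ is \'etale at each of the $f^\sigma$ points of $\mathcal{G}_0$, so every nearby fiber is reduced as well. In particular the generic fiber $\Grdab(X,p,q)$ consists of $f^\sigma$ reduced points, as claimed.

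The main obstacle is the same one overcome in the $\rho=1$ case: ensuring that the Murray–Osserman hypotheses apply to our chosen degeneration and ramification data so that Osserman's flat family really does compute the Eisenbud–Harris scheme $\Grdab(X,p,q)$ on the generic fiber. Once this is granted, the remainder of the argument is essentially formal; the combinatorial content of the theorem is entirely packaged into the count of $f^\sigma$ refined limit linear series furnished by Proposition~\ref{p:rho0}.
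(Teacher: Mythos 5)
Your proposal is correct and follows essentially the same route as the paper: both degenerate to a generic twice-marked elliptic chain, use Proposition~\ref{p:rho0} to identify the special fiber as $f^\sigma$ reduced points, and invoke the Murray--Osserman flat, proper family of Eisenbud--Harris limit linear series to transfer the count to the general smooth twice-pointed curve. The paper proves Theorems~\ref{t:main} and~\ref{t:main3} simultaneously with exactly this degeneration-plus-flatness argument, so your write-up (with the extra remark on reducedness of nearby fibers) is just a slightly more detailed version of the same proof.
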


\begin{proof}[Proof of Theorems \ref{t:main} and \ref{t:main3}]
Consider a family of curves parameterized by the spectrum of a discrete valuation ring whose special fiber $X_0$ is a generic chain
of elliptic curves and whose generic fiber $X_{\eta}$ is a non-singular curve. The dimension of $\Grdab(X,p,q)$ is the expected number 
$\rho$, the space of refined limit linear series is dense, and the Eisenbud-Harris scheme structure is reduced. 
Then, from Corollary 3.4 in \cite{murrayosserman}, the family of Eisenbud-Harris limit linear series is flat and proper over $B$. 
 Note that while  \cite{murrayosserman} does not explicitly consider specified ramification, the methods in that paper can be extended to cover also the case of ramification points
 (compare with \cite{osserman-llsmodsch} Def 4.5). 
The special fiber of the family is the Eisenbud-Harris space of limit linear series on $X_0$,
while the generic fiber is the classical space of linear series on the non-singular curve $X_{\eta}$. 
In our situation, the fibers of the family are curves, and the arithmetic genus of the curves in a flat proper family is constant.
Hence the result follows from Theorem \ref{t:mainchain}.
\end{proof}

Now the Eisenbud-Harris and Pirola genus formula is a special case of our results:
\begin{cor}\label{cor:final}
Suppose $\rho(g,r,d) = 1$.  For a general smooth curve $X$ of genus $g$, the genus of the curve $\Grd(X)$ is 
$$1+\frac{(r+1)(g-d+r)}{g-d+2r+1}\cdot g!\cdot \prod_{i=0}^r \frac{i!}{(g-d+r+i)!}.$$
\end{cor}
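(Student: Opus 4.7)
The plan is to specialize Theorem~\ref{t:main} to the unramified case $\alpha=\beta=0$. Here the skew shape $\sigma(g,r,d,0,0)$ is simply the $(r+1)\times(g-d+r)$ rectangle, and since $\rho(g,r,d)=1$ it has exactly $n=(r+1)(g-d+r)=g-1$ boxes, so $n+1=g$.

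The first step is to identify which of the auxiliary tableau counts $f^{\sigma^i}$ and $f^{{}^i\sigma}$ vanish. Since $\mu=\emptyset$, there is no room to add a box to the left of any row of $\sigma$ and remain a skew shape, so $f^{{}^i\sigma}=0$ for every $i$. Since all rows of $\sigma$ have the same length, the condition $\lambda_i<\lambda_{i-1}$ from Definition~\ref{d:sigmai} fails for $i\geq 2$, so $f^{\sigma^i}=0$ for $i\geq 2$. The formula of Theorem~\ref{t:main} therefore collapses to
$$g'=1+(r+1)\,g\,f^\sigma-(r+1)\,f^{\sigma^1}.$$

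The second step is to evaluate $f^{\sigma^1}/f^\sigma$. This ratio is already computed inside the proof of Corollary~\ref{t:harmonic}: the only hook lengths that change upon adding a box at the end of row $1$ lie in that row and in the new column, so the product telescopes to
$$\frac{f^{\sigma^1}}{f^\sigma}=\frac{(r+1)\,g}{g-d+2r+1}.$$
Substituting and factoring yields
$$g'=1+\frac{(r+1)(g-d+r)}{g-d+2r+1}\cdot g\cdot f^\sigma.$$

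The last step is to insert the hook-length value of $f^\sigma$ itself. Applying the hook-length formula to the rectangle (or equivalently invoking the rectangle identity $\prod_{j=0}^{b-1}j!/(a+j)!=\prod_{i=0}^{a-1}i!/(b+i)!$ with $a=r+1$, $b=g-d+r$), one obtains
$$g\cdot f^\sigma=g!\prod_{i=0}^{r}\frac{i!}{(g-d+r+i)!},$$
which is exactly the factor in the stated formula. I do not expect any genuine obstacle: the argument is a direct specialization combined with routine bookkeeping, the only mild care being the verification that only $f^{\sigma^1}$ survives among the auxiliary terms and the matching of the two equivalent hook-length presentations of $f^\sigma$ for a rectangle.
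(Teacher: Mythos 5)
Your specialization is correct and is essentially the paper's own proof, which simply combines Theorem~\ref{t:main} with the rectangle computation of Corollary~\ref{t:harmonic} (whose proof contains exactly your ratio $f^{\sigma^1}/f^\sigma=(r+1)g/(g-d+2r+1)$ and the hook-length evaluation of $f^\sigma$). One small correction to your bookkeeping: it is not true that $f^{{}^i\sigma}=0$ for \emph{every} $i$ --- adding a box at the left of the bottom row of the rectangle does give a legitimate skew shape, so $f^{{}^{r+1}\sigma}=f^{\sigma^1}\neq 0$ (compare Corollary~\ref{c:strangeidentity}); your collapsed formula is nevertheless correct because that term enters Theorem~\ref{t:main} with coefficient $r+1-i=0$.
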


\begin{proof}
This follows from combining Theorem~\ref{t:harmonic} and Theorem~\ref{t:main}.
\end{proof}

\end{document}